\definecolor{blue-black}{rgb}{0,0,0.8}
\definecolor{green-black}{rgb}{0,0.75,0}
\definecolor{red-black}{rgb}{0.8,0,0}
\theoremstyle{plain}
\newtheorem{thm}{Theorem}[section]
\newtheorem{prop}[thm]{Proposition}
\newtheorem{lem}[thm]{Lemma}
\newtheorem{cor}[thm]{Corollary}
\theoremstyle{definition}
\newtheorem{rem}[thm]{Remark}
\newtheorem{dfn}[thm]{Definition}
\newtheorem{conv}[thm]{Convention}
\numberwithin{equation}{section}
\newcommand\fb{\mathfrak b}
\newcommand\fg{\mathfrak g}
\newcommand\fh{\mathfrak h}
\newcommand\fk{\mathfrak k}
\newcommand\fl{\mathfrak l}
\newcommand\fn{\mathfrak n}
\newcommand\fp{\mathfrak p}
\newcommand\fq{\mathfrak q}
\newcommand\fs{\mathfrak s}
\newcommand\fu{\mathfrak u}
\newcommand\cC{\mathscr C}
\newcommand\cF{\mathscr F}
\newcommand\cH{\mathscr H}
\newcommand\cO{\mathscr O}
\newcommand\cP{\mathscr P}
\newcommand\cQ{\mathscr Q}
\newcommand\cS{\mathscr S}
\newcommand\bbC{\mathbb C}
\newcommand\bbP{\mathbb P}
\newcommand\bbQ{\mathbb Q}
\newcommand\bbZ{\mathbb Z}
\newcommand\bj{\mathbf j}
\newcommand\bk{\mathbf k}
\newcommand{\pplus}{\cP^+}
\newcommand{\qplus}{\cQ^+}
\newcommand{\uqg}{U_q(\fg)}
\newcommand{\uqk}{U_q(\fk)}
\newcommand{\uql}{U_q(\fl)}
\newcommand{\uqsltwo}{U_q(\fs\fl_2)}
\newcommand{\uqslfour}{U_q(\fs\fl_4)}
\newcommand{\uqbm}{U_q(\fb_-)}
\newcommand{\uqnp}{U_q(\fn_+)}
\newcommand{\uqnm}{U_q(\fn_-)}
\newcommand{\peq}{\preceq}
\newcommand{\ext}{\Lambda}
\newcommand{\extq}{\Lambda_q}
\newcommand{\symq}{S_q}
\newcommand{\sym}{S}
\newcommand{\of}{\cO_f}
\newcommand{\up}{\fu_+}
\newcommand{\um}{\fu_-}
\newcommand{\upm}{\fu_\pm}
\newcommand{\rhat}{\widehat{R}}
\newcommand{\Wl}{W_{\fl}}
\newcommand{\wl}{w_{\fl}}
\newcommand{\wzl}{w_{0,\fl}}
\newcommand{\wz}{w_0}
\newcommand{\grf}{\gr^\cF}
\newcommand{\zp}{\bbZ_+}
\newcommand{\ox}{\bar{x}}
\newcommand{\gc}{\Gamma^\circ}
\newcommand{\Fc}{\cF^\circ}
\newcommand{\grfc}{\gr^{\Fc}}
\newcommand{\oy}{\bar{y}}
\renewcommand{\rhd}{\triangleright}
\newcommand{\clifq}{\clif_q}
\newcommand{\cliff}{\clif}
\newcommand{\delbar}{\mkern2mu\overline{\mkern-2mu\partial}}
\newcommand{\eqdef}{:=}
\DeclareMathOperator{\tr}{tr}
\DeclareMathOperator{\ad}{ad}
\DeclareMathOperator{\clif}{Cl}
\DeclareMathOperator{\op}{op}
\DeclareMathOperator{\im}{im}
\DeclareMathOperator{\wt}{wt}
\DeclareMathOperator{\id}{id}
\DeclareMathOperator{\End}{End}
\DeclareMathOperator{\gr}{gr}
\DeclareMathOperator{\spn}{span}
\DeclareMathOperator{\Gr}{Gr}
\DeclarePairedDelimiter\abs{\lvert}{\rvert}%
\DeclarePairedDelimiter\norm{\lVert}{\rVert}%
\let\oldabs\abs
\def\abs{\@ifstar{\oldabs}{\oldabs*}}
\let\oldnorm\norm
\def\norm{\@ifstar{\oldnorm}{\oldnorm*}}
\newcommand{\DynkinDiagram}{\xymatrix@M=0pt@=20pt}
\newcommand{\Edge}[2][1]{\ar@[|(1)]@#1{-}[#2]}
\newcommand{\DirectedEdge}[2][1]
{\Edge[#1]{#2}
\ar@[|(1)]@#1{}[#2]
|>>>>{\SelectTips{eu}{}\object@#1{>}}}
\def\tr{\mathop{\mathrm{tr}}}
\title[On the Dolbeault--Dirac operator of quantized
symmetric spaces]{On the Dolbeault--Dirac operator\\
of quantized symmetric spaces} 
\author{Ulrich Kr\"ahmer} 
\address{University of Glasgow, School of
Mathematics and Statistics\\ 15 University Gardens, G12
8QW\\ Glasgow, Scotland}
\email{ulrich.kraehmer@glasgow.ac.uk}
\author{Matthew Tucker-Simmons}
\address{Mathematical Sciences Publishers\\
798 Evans Hall \#3840\\
Berkeley, CA 94720-3840\\
United States}
\email{matt@msp.org}
\thanks{M.T.-S.\ was supported by National Science
Foundation grant DMS-1066368, and U.K.\ was supported
by the EPSRC grant ``Hopf algebroids and operads''.
U.K.\ furthermore thanks INI Cambridge and MSRI
Berkeley for hospitality.}
\begin{document}

\begin{abstract} 
The Dolbeault complex of a quantized
compact Hermitian symmetric space is expressed in terms
of the Koszul complex of a braided symmetric algebra of
Berenstein and Zwicknagl.  This defines
a spectral triple quan\-tizing the
Dolbeault--Dirac operator associated to the canonical
spin\(^c\) structure.  
\end{abstract}

\maketitle

\section{Introduction}
\label{sec:introduction}

The aim of this paper is to connect the noncommutative
geometry of the quantized compact Hermitian symmetric spaces 
(see e.g.\ \cite{DabSit03,DanDab10,DanDabLan08,HecKol04,HecKol06,Kra04,
NesTus05,OBua13,SchWag04}) and the theory of the braided
symmetric and exterior algebras of Berenstein and Zwicknagl 
\cite{BerZwi08,Zwi09}.  

Recall that the compact homogeneous K\"ahler manifolds are the
quotients \(G/P\) of a complex semisimple Lie
group \(G\) by a parabolic subgroup \(P\) (see
\cite[\S 6.4]{BasEas89} or \cite{Wan54}) and that  
the compact Hermitian symmetric spaces 
form a subclass; see \cref{sec:motivation-and-outlook}
and the references therein.
 
The theory of quantum groups leads to a quantization of 
these that is described by a right
coideal subalgebra \(A\) of the quantized function
algebra \(\bbC_q[G]\) \cite{Dij96,NouSug94}.
Furthermore, Connes' spectral triples \cite{Con94,ConMar08}
provide a framework for quantizing the K\"ahler
metric.
Concretely, this requires one to
deform, in addition to the algebra of functions, the 
spinor fields with respect to the canonical
spin\(^c\) structure as well as the Dolbeault--Dirac operator
\(\delbar+\delbar^\ast\) acting on them.  

Generalizing the pioneering work of \cite{DabSit03} 
on \(\bbC \bbP^1\), the existence of such a
spectral triple was established in
\cite{Kra04}, and it was shown that this  
provides a representation of the covariant differential 
calculus over \(A\) (in the sense of Woronowicz) defined and 
studied by Heckenberger and Kolb in \cite{HecKol04,HecKol06}.

However, the implicit nature of the construction of \(\delbar\) 
in \cite{Kra04} prevented both a computation of the spectrum
of \(D\) and a rigorous proof that it defines a nontrivial 
equivariant K-homology class over \(A\).
In the classical case, Parthasarathy's
formula allows one to compute the spectral properties of \(D\) \cite{Par72}; see also \cite{Agr03}.
The present paper is a first step toward a quantum
analogue of this formula.

More precisely, we construct in \cref{dfn:dolbeault-operator} 
an element 
\[
	D=\eth+\eth^* \in \uqg \otimes \clifq
\] 
that we refer to as the \emph{Dolbeault--Dirac operator}. 
This terminology is analogous to Kostant's use of the term ``Dirac
operator'' in \cite{Kos99}: \(D\) is an algebraic object that 
induces the corresponding elliptic first-order
differential operator in the classical setting.    
Here \(\uqg\) is the compact real form of the 
quantized universal
enveloping algebra of the Lie algebra of \(G\) and
\(\clifq\) is a quantized Clifford algebra. 
The main result of our paper is to prove that the 
element \(\eth\) squares to zero so that one has:
\begin{thm}
  \label{thm:main-theorem}
  The Dolbeault--Dirac operator satisfies 
  \(D^2 = \eth \eth^\ast + \eth^\ast \eth\).
\end{thm}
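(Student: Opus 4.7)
My plan is first to reduce the theorem to the single identity $\eth^2=0$, then to identify $\eth$ with a Koszul-type differential coming from the Berenstein--Zwicknagl braided symmetric algebra, and finally to extract the vanishing from the duality between the braided symmetric and braided exterior algebras in the Hermitian symmetric setting.

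For the reduction, I expand
\[
    D^2 = (\eth+\eth^\ast)^2 = \eth^2 + \eth\eth^\ast + \eth^\ast\eth + (\eth^\ast)^2,
\]
so the theorem is equivalent to $\eth^2+(\eth^\ast)^2=0$. Since $\eth^\ast$ is the formal adjoint of $\eth$ with respect to the compact real structure on $\uqg$ and the inner product on $\clifq$, we have $(\eth^\ast)^2 = (\eth^2)^\ast$, and so the whole theorem comes down to proving $\eth^2=0$.

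The strategy I would pursue for $\eth^2=0$ is modeled on Kostant's classical argument. I expect $\eth$ to be expressible in the form $\eth = \sum_i f_i \otimes \gamma^i$, where the $f_i \in \uqg$ generate the quantum analogue of $\um \subset \uqnm$ and the $\gamma^i \in \clifq$ are creation operators implementing braided exterior multiplication by a dual basis of $\up$. Then
\[
    \eth^2 = \sum_{i,j} f_i f_j \otimes \gamma^i \gamma^j.
\]
The point of writing the Dolbeault complex through the Koszul complex of the braided symmetric algebra $\symq(\up)$ of Berenstein and Zwicknagl is precisely that, in the Hermitian symmetric case, $\symq(\up)$ is a flat deformation of the classical polynomial algebra and is Koszul, with Koszul dual naturally identified with the braided exterior algebra $\extq(\up)$. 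The quadratic relations of $\symq^2(\up)$ and of $\extq^2(\up)$ are mutual annihilators inside $\up \otimes \up$ under the duality pairing; since $f_i f_j$ lies (on the relevant module) in the image of $\symq^2(\up)$ while $\gamma^i\gamma^j$ lies in the image of $\extq^2(\up)$, this pairing forces the sum above to vanish.

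The main obstacle I anticipate is the honest identification of the ingredients of \cref{dfn:dolbeault-operator} with this Koszul picture: in the classical case the abelianness of $\fp_+$ makes $f_i f_j = f_j f_i$ and the argument is transparent, whereas in the quantum case the generators satisfy only $q$-commutation relations with lower-order correction terms coming from the Cartan part of $\uqg$. I would handle these by working modulo a PBW-type filtration on $\uqnm$ whose associated graded algebra is exactly $\symq(\up)$, which is the content of Berenstein--Zwicknagl's flatness theorem. Verifying that the correction terms from the filtration act trivially on the spinor module---equivalently, that $\eth$ as defined really is the associated graded of the Koszul differential and not merely a deformation of it---is where I expect the bulk of the technical work to lie, and where the specific structure of the Hermitian symmetric (as opposed to general parabolic) case is essential.
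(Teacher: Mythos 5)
Your overall route coincides with the paper's: reduce to \(\eth^2=0\) and get that vanishing by realizing \(\eth\) as the Koszul differential of the braided symmetric algebra \(\symq(\up)\), whose square vanishes by quadratic duality (the paper quotes this from Polishchuk--Positselski in \cref{prop:properties-of-delbar}; note that the quadratic dual is \(\extq(\um)\), not \(\extq(\up)\), and that the Clifford leg of \(\eth\) acts by the annihilation map \(\gamma_-\), but these are bookkeeping points).

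The genuine gap is exactly the step you flag as the anticipated bulk of the work. You propose to handle the discrepancy between the elements \(f_i\in\uqg\) and the generators of \(\symq(\up)\) by working modulo a PBW-type filtration of \(\uqnm\) whose associated graded is \(\symq(\up)\), and then to check that the ``correction terms act trivially on the spinor module''. That plan cannot deliver the theorem as stated: if \(\eth\) agreed with the Koszul differential only up to lower-order terms of a filtration, you would conclude only that \(\eth^2\) drops in filtration degree, i.e.\ \(\eth^2=0\) in the associated graded, which is strictly weaker than \(D^2=\eth\eth^*+\eth^*\eth\). The point the paper relies on is that no such approximation is needed: by Zwicknagl's theorem (\cref{prop:zwicknagl-main-theorem-5-6}) the twisted quantum Schubert cell \(U'(\wl)\), generated by the quantum root vectors \(E_{\xi_i}\), is isomorphic to \(\symq(\up)\) \emph{exactly} as a \(\uql\)-module algebra; the De Concini--Kac--Procesi commutation relations close in degree two precisely because in the cominuscule case every radical root contains \(\alpha_t\) with coefficient one (\cref{rem:on-zwicknagls-main-theorem-5-6}), so the correction terms are themselves quadratic in the \(E_{\xi_i}\) and are part of the relations of \(\symq(\up)\), not deviations from them. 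Consequently \(x_i\otimes y\mapsto S^{-1}(E_{\xi_i})\otimes\gamma_-(y)\) is an honest algebra embedding \(\symq(\up)^{\op}\otimes\extq(\um)\hookrightarrow\uqg\otimes\clifq\) (\cref{lem:embed}; the inverse antipode accounts for the opposite multiplication and \(\gamma_-\) is an algebra map), and \(\eth^2=0\) holds on the nose because \(\eth\in\uqg\otimes\clifq\) is by definition the image of the abstract Koszul element. So the filtration argument should be replaced by an appeal to (or proof of) this exact isomorphism; that is where the Hermitian symmetric hypothesis enters, rather than through any verification that correction terms act trivially on spinors.
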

  
The connection to the work of Berenstein and Zwicknagl
arises in the construction of \(\clifq\).  Classically,
the typical fibre of the canonical spin\(^c\) structure
on \(G/P\) is the exterior algebra \(\ext (\fg /
\fp)\), so the classical (complex) Clifford algebra can
be identified with \(\mathrm{End}_\bbC (\ext (\fg / \fp
))\); see \cref{sec:classical-clifford-alg} below.  In
\cref{sec:the-quantum-clifford-algebra}, we construct
the quantized Clifford algebra \(\clifq\) analogously,
using creation and annihilation operators on braided
exterior algebras.  A general theory of these algebras
was developed in \cite{BerZwi08}.  In \cite{Zwi09},
Zwicknagl classified the simple representations of
semisimple Lie algebras whose associated braided
symmetric (and hence exterior) algebras are flat
deformations of the classical ones.  These are
precisely the braided exterior algebras \(\extq(\um)\)
that we consider below.

Furthermore, the quadratic dual of \(\extq(\um)\) is
the braided symmetric algebra \(\symq(\up)\) of the
nilradical of \(\fp\) (recall that \(G/P\) is a
symmetric space if and only if \(\up\) is an abelian
Lie algebra, so that \(S(\up)=U(\up)\); see
\cref{prop:irred-parabolic-conditions} below).
Zwicknagl has embedded these into \(\uqg\)
as what he calls the twisted quantum Schubert cells
\cite{Zwi09}*{\S 5}.  In this way, \(\eth\) becomes
identified with the differential in the Koszul complex
of the Koszul algebra \(\symq(\up)\), and this leads to
\cref{thm:main-theorem}.

The paper is organized as follows:
\cref{sec:notation-and-prelims} contains background
material and notation on semisimple Lie algebras,
parabolic Lie subalgebras, their quantizations
and representations.  All this material is standard
knowledge, except maybe the discussion of the
coboundary structure on the category of Type~1
representations of \(\uqg\) in
\cref{sec:coboundary-structure}.
\cref{sec:quantum-symm-and-ext-algs} recalls the
relevant parts of the work of Berenstein and Zwicknagl.
The subsequent
\cref{sec:quantum-exterior-algebras-are-frobenius} is
devoted to a proof that the quantum exterior algebras
are Frobenius, as this is used in 
the construction of the quantized Clifford
algebra and the proof of \cref{thm:main-theorem} in
\cref{sec:clifford-algs-and-spinors}.  
In \cref{sec:grassmannian} we present some details of these 
constructions in the case when the homogeneous space in question
is the Grassmann manifold \(\Gr(2,4)\).
\cref{sec:motivation} explains the geometric background
as well as the connection of this paper with the
results of \cite{Kra04}.
 
We would like to thank A.~Chirvasitu and V.~Serganova
for helpful discussions regarding aspects of this work.

\section{Notation and preliminaries}
\label{sec:notation-and-prelims}

Here we set notation and recall the facts we will need about quantum enveloping algebras and their attendant infrastructure.
We follow the conventions of \cite{Jan96} for the most part.

\subsection{Simple Lie algebras \(\mathfrak{g}\)}
\label{sec:lie-alg}

Let \(\fg\) be a finite-dimensional complex simple Lie algebra with a fixed Cartan subalgebra \(\fh\) and corresponding root system \(\Delta(\fg) \subseteq \fh^*\).
Let \(\Delta^+(\fg)\) be a choice of positive roots and denote \(\Delta^-(\fg) = - \Delta^+(\fg) = \Delta(\fg) \setminus \Delta^+(\fg)\).
Let \(\Pi = \{\alpha_1, \dots, \alpha_r\}\) be the corresponding set of simple roots, \(\cQ\) the integral root lattice, and \(\qplus \subseteq \cQ\) the cone of nonnegative linear combinations of simple roots.
The Killing form on \(\fg\) is nondegenerate on \(\fh\) and thus induces a symmetric bilinear form \((\cdot,\cdot )\) on \(\fh^*\).  
The Cartan matrix of \(\fg\) is \((a_{ij})\), where
\[ (\alpha_i,\alpha_j) = d_i a_{ij} \quad  \text{ with } \quad  d_i = \frac{(\alpha_i,\alpha_i)}{2}.   \]
Let \(\{\omega_1, \dots, \omega_r\}\) be the corresponding set of fundamental weights, determined by the conditions \((\alpha_i,\omega_j) = \delta_{ij}d_i\).
Then we denote by \(\cP\) the integral weight lattice, and by \(\pplus \subseteq \cP\) the cone of dominant integral weights.

There is a natural partial order on \(\cP\) given by \(\mu \peq \nu\) if \(\nu - \mu \in \qplus\).
We write \(\mu \prec \nu\) if \(\mu \peq \nu\) and \(\mu \neq \nu\).
Since \(\fg\) is simple it has a highest root, which is the highest weight of the adjoint representation.

\subsection{Parabolic subalgebras \(\fp\)}
\label{sec:parabolics}

Given a subset \(\cS \subseteq \Pi\) of simple roots, define two sets of roots by
\[
\Delta(\fl) \eqdef \spn(\cS) \cap \Delta(\fg), \quad 
\Delta(\fu_+) \eqdef \Delta^+(\fg) \setminus \Delta(\fl),
\]
and set
\[
\fl \eqdef \fh \oplus \bigoplus_{\alpha \in \Delta(\fl)} \fg_{\alpha}, \quad 
\fu_{\pm} \eqdef \bigoplus_{\alpha \in \Delta(\fu_+)} \fg_{\pm \alpha}, \quad \text{and} \quad \fp \eqdef \fl \oplus \fu_+.
\]
Then \(\fl\) and \(\fu_\pm\) are Lie subalgebras of \(\fg\), and we have
 \([\fl, \fu_\pm] \subseteq \fu_\pm\).
One calls \(\fp\) the \emph{standard parabolic subalgebra} associated
to \(\cS\).
The subalgebra \(\fl\) is reductive, and is called the \emph{Levi factor} of \(\fp\), while \(\fu_+\) is a nilpotent ideal of \(\fp\), called the \emph{nilradical}.
We refer to the roots in \(\Delta(\up)\) as the \emph{radical roots}.
We also denote the semisimple part of \(\fl\) by \(\fk \eqdef [\fl,\fl]\)
and put
\(\fu \eqdef \fu_+ \oplus \fu_-\).

The adjoint action of \(\fp\) on \(\fg\) descends to an action on \(\fg/\fp\), and the decomposition \(\fg = \fu_- \oplus \fp\) is a splitting as \(\fl\)-modules, so that \(\fg/\fp \cong \fu_-\) as \(\fl\)-modules.
With respect to the Killing form of \(\fg\), both \(\fu_+\) and \(\fu_-\) are isotropic, and we have \(\fl = \fu^\perp\).
Furthermore, the pairing \(\fu_+ \times \fu_- \to \bbC\) coming from the Killing form is nondegenerate, and hence \(\fu_-\) and \(\fu_+\) are mutually dual as \(\fl\)-modules.

\subsection{The cominuscule case}

Throughout this paper we will only deal with a
special type of parabolic subalgebra, which is
characterized by the next result.
  
\begin{prop}
  \label{prop:irred-parabolic-conditions}
  For a standard parabolic subalgebra \(\fp\) of a
complex simple Lie algebra \(\fg\), 
the following conditions are equivalent:
  \begin{enumerate}[(a)]
  \item \(\fg/\fp\) is a simple \(\fp\)-module;
  \item \(\fu_-\) is a simple \(\fl\)-module;
  \item \(\fu_-\) is an abelian Lie algebra;
  \item \(\fu_+\) is a simple \(\fl\)-module;
  \item \(\fu_+\) is an abelian Lie algebra;
  \item \(\fp\) is maximal, i.e.\ \(\cS = \Pi \setminus \{\alpha_t\}\)
    for some \(1 \leq t \leq r\), and moreover \(\alpha_t\) has
    coefficient 1 in the highest root of \(\fg\);
  \item \([\fu,\fu] \subseteq \fl\);
  \item \((\fg,\fl)\) is a symmetric pair, i.e.\ there is an involutive Lie algebra automorphism \(\sigma\) of \(\fg\) such that \(\fl = \fg^\sigma\).
  \end{enumerate}
\end{prop}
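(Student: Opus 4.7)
The plan is to introduce a $\bbZ$-grading of $\fg$ and recognize each of (a)--(h) as the statement that this grading has depth one. Define $H \in \fh$ by $\alpha_i(H) = 0$ for $\alpha_i \in \cS$ and $\alpha_i(H) = 1$ for $\alpha_i \in \Pi \setminus \cS$, and let $\fg_k := \{x \in \fg : [H,x] = kx\}$. This yields $\fg = \bigoplus_{k \in \bbZ} \fg_k$ with $\fl = \fg_0$ and $\fu_\pm = \bigoplus_{\pm k > 0} \fg_k$. Let $m \geq 1$ be the largest integer with $\fg_m \neq 0$; equivalently, $m = \sum_{\alpha_i \in \Pi \setminus \cS} \theta_i$ where $\theta = \sum_i \theta_i \alpha_i$ is the highest root of $\fg$. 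The whole proposition then amounts to the single claim that each of (a)--(h) is equivalent to $m = 1$.

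The symmetric equivalences are essentially formal. Since $\alpha + \beta \in \Delta(\fg)$ iff $-\alpha - \beta \in \Delta(\fg)$, we obtain (c) $\iff$ (e). The nondegenerate $\fl$-invariant Killing pairing between $\fu_+$ and $\fu_-$ recalled in \cref{sec:parabolics} gives (b) $\iff$ (d). For (e) $\iff$ (g): when $m = 1$ both $[\fg_{\pm 1}, \fg_{\pm 1}] \subseteq \fg_{\pm 2} = 0$ and $[\fg_1, \fg_{-1}] \subseteq \fg_0 = \fl$, so $[\fu, \fu] \subseteq \fl$; conversely, since $[\fu_+, \fu_+] \subseteq \fu_+$ always, (g) forces $[\fu_+, \fu_+] \subseteq \fu_+ \cap \fl = 0$.

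The heart of the argument is the pair of equivalences (e) $\iff$ (f) and (d) $\iff$ (e). For (e) $\implies$ (f): if $|\Pi \setminus \cS| \geq 2$, connectedness of the Dynkin diagram of $\fg$ produces a positive root whose support contains two distinct elements of $\Pi \setminus \cS$, placing it in $\fg_k$ for some $k \geq 2$ and contradicting $\fu_+$ abelian; hence $\fp$ is maximal, and then $m = 1$ reads as $\theta_t = 1$. The converse is immediate, since under (f) every root in $\Delta(\fu_+)$ has $\alpha_t$-coefficient one and so pairwise sums cannot be roots. For (d) $\implies$ (e): $[\fu_+, \fu_+]$ is an $\fl$-submodule of $\fu_+$ lying in grades $\geq 2$, hence strictly contained in $\fu_+$ because $\fg_1 \neq 0$; simplicity then forces $[\fu_+, \fu_+] = 0$. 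Conversely (e) $\implies$ (d) is a highest-weight argument: when $m = 1$, $e_\theta$ is annihilated by the positive part of $\fl$ and generates $\fu_+ = \fg_1$ under the negative part of $\fl$, exhibiting $\fu_+$ as the irreducible $\fl$-module of highest weight $\theta$.

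For (h) $\iff$ (f), the automorphism $\sigma := \exp(\pi i \ad H)$ acts as $(-1)^k$ on $\fg_k$, so has fixed locus $\bigoplus_{k \text{ even}} \fg_k$, which equals $\fl$ precisely when no nonzero even-graded piece of nonzero degree exists, i.e., $m = 1$. Conversely, any involution $\sigma$ with $\fg^\sigma = \fl$ preserves the Killing form, so $\fg^{-\sigma} = \fl^\perp$; combined with $\fl = \fu^\perp$ from \cref{sec:parabolics}, this gives $\fg^{-\sigma} = \fu_+ \oplus \fu_-$, and then $[\fu_+, \fu_+] \subseteq [\fg^{-\sigma}, \fg^{-\sigma}] \subseteq \fg^\sigma = \fl$ forces (e). Finally, for (a) $\iff$ (e): the subspaces $V_k := \bigoplus_{j=1}^{k} \fg_{-j} \subseteq \fu_- \cong \fg/\fp$ form a filtration by $\fp$-submodules, since $[\fg_j, \fg_{-i}] \subseteq \fg_{j-i}$ lies in $\fp$ for $j \geq i$ and in $V_{k-1}$ for $1 \leq j < i \leq k$. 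If $m \geq 2$, then $V_1 = \fg_{-1}$ is a proper nonzero $\fp$-submodule, so (a) fails; while $m = 1$ renders the $\fu_+$-action on $\fg/\fp$ trivial, reducing $\fp$-simplicity to (b). The main obstacle is the combinatorial step in (e) $\implies$ (f), which requires careful use of connectedness of the Dynkin diagram of $\fg$ to produce the required root.
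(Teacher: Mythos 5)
The paper never actually proves this proposition: it is presented as standard background (with pointers to \cite{BasEas89}*{Example 3.1.10} and \cite{Kob08}*{\S 7.3} right after the table of diagrams), so there is no in-paper argument to compare yours against. Your route --- encoding \(\fp\) by a grading element \(H\) and reducing all eight conditions to the single statement \(m=1\) for the induced \(\bbZ\)-grading --- is the standard one for this result, and most of the individual steps are correct as written: the formal symmetries (c)\(\Leftrightarrow\)(e) and (b)\(\Leftrightarrow\)(d), the explicit involution \(\exp(\pi i \ad H)\) for the implication toward (h), the Killing-form orthogonality argument giving \(\fg^{-\sigma}=\fu_+\oplus\fu_-\) and hence (h)\(\Rightarrow\)(e), the submodule argument for (d)\(\Rightarrow\)(e), and the filtration of \(\fg/\fp\) handling (a).

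Two steps, however, are asserted where a short argument is genuinely needed, and both are load-bearing. First, in (e)\(\Rightarrow\)(f) (and wherever you pass from (e) to ``\(m=1\)'') you claim that the existence of a root of \(H\)-degree at least \(2\) already contradicts abelianness of \(\fu_+\). By itself it does not: you must show such a root is a sum of two radical roots. This is easy but not automatic --- e.g.\ take \(\gamma\) with \(\gamma(H)\ge 2\) of minimal height; \(\gamma\) is not simple, so \(\gamma-\alpha_i\) is a positive root for some simple \(\alpha_i\), and minimality forces \(\alpha_i\notin\cS\), so \(\gamma=(\gamma-\alpha_i)+\alpha_i\) with both summands in \(\Delta(\fu_+)\); alternatively quote the standard fact \(\fg_{k+1}=[\fg_1,\fg_k]\) for \(k\ge 1\). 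Second, in (e)\(\Rightarrow\)(d) you assert that \(e_\theta\) generates \(\fg_1\) under the lowering operators of \(\fl\); the clean justification runs the other way: when \(m=1\), any \(\fl\)-highest-weight vector in \(\fg_1\) is annihilated by \(\fg_{\alpha_i}\) for \(\alpha_i\in\cS\) (highest weight) and for \(\alpha_i\notin\cS\) (degree reasons, since \(\fg_2=0\)), hence by all of \(\fn_+\), hence is proportional to \(e_\theta\) because the simple algebra \(\fg\) has a unique highest root; complete reducibility then shows \(\fg_1\) has a single irreducible summand. With these two insertions (and the implicit standing assumption \(\cS\neq\Pi\), without which (c), (e), (g), (h) hold vacuously while (a), (b), (d), (f) fail), your proof is complete.
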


\begin{conv}
  \label{conv:cominuscule-assuption}
We assume for the rest of this paper that the 
conditions of \cref{prop:irred-parabolic-conditions} 
are satisfied. We say that a parabolic subalgebra satisfying 
these is of \emph{cominuscule type}.
\end{conv}

Condition (f) allows for a classification of all 
cominuscule parabolics in terms of Cartan data.
The complete list is given in \cref{dynkintable}.
Therein, the simple root \(\alpha_t\) omitted in \(\cS=\Pi
\setminus \{\alpha_t\}\) is indicated 
by crossing out the corresponding node in the Dynkin
diagram of \(\mathfrak{g}\). 
(There is a third diagram missing for type $D_r$, but it can be obtained from 
the second one for type $D_r$ by the nontrivial automorphism of the 
Dynkin diagram.)
See, for example, 
\cite{BasEas89}*{Example 3.1.10} or 
\cite{Kob08}*{\S 7.3} for more information.

\begin{table}[h]
\begin{tabular}{|c|c|}
\hline
& \\
\(A_r\) &
\(\DynkinDiagram{
\bullet \Edge{r} & \bullet \Edge{r} & \,\,\cdots\,\,
\Edge{r} &
\bullet \Edge{r} & \times \Edge{r} & \bullet \Edge{r} &
\,\,\cdots\,\, \Edge{r} & \bullet \Edge{r} &
\bullet}\)\\[2mm]
\(B_r\) &
\(\DynkinDiagram{
\times \Edge{r} & \circ \Edge{r} &
\,\,\cdots\,\, \Edge{r} & \circ \DirectedEdge[2]{r} &
\bullet}\)\\[2mm]
\(C_r\) &
\(\DynkinDiagram{
\bullet \Edge{r} & \bullet \Edge{r} & \,\,\cdots\,\,
\Edge{r} & \bullet & \times
\DirectedEdge[2]{l}}\)\\[2mm]
\(\DynkinDiagram{ \\ D_r \\}\) &
\(\DynkinDiagram{
& & & & \bullet\\
\times \Edge{r} & \bullet \Edge{r} &
\,\,\cdots\,\, \Edge{r} & \bullet \Edge{ur} \Edge{dr}
&\qquad\\
&&&&\bullet
} \quad
\DynkinDiagram{
& & & & \times\\
\bullet \Edge{r} & \bullet \Edge{r} &
\,\,\cdots\,\, \Edge{r} & \bullet \Edge{ur} \Edge{dr}
&\\
&&&&\bullet
}\)\\[2mm]
\(E_6\) &
\(\DynkinDiagram{
\bullet \Edge{r} & \bullet \Edge{r} & \bullet \Edge{r}
\Edge{d}& \bullet \Edge{r} & \times \\
& & \bullet & &}\) \\[2mm]
\(E_7\) &
\(\DynkinDiagram{
\bullet \Edge{r} & \bullet \Edge{r} & \bullet \Edge{r}
\Edge{d}& \bullet \Edge{r} & \bullet \Edge{r}
& \times \\
& & \bullet & & &}\) \\
& \\
\hline
\end{tabular}
\vskip4pt 
\caption{Dynkin diagrams for cominuscule parabolic subalgebras.}
\label{dynkintable}
\end{table}

The next result describes the cominuscule 
situation in more detail:

\begin{lem}
  \label{lem:cominuscule-properties}
  Let \(\fp \subseteq \fg\) be a parabolic subalgebra of cominuscule type.
  Then:
  \begin{enumerate}[(a)]
  \item The radical roots \(\beta \in \Delta(\up)\) are the roots of the form \(\beta = \alpha_t + \xi\), where \(\xi\) is a sum of positive roots of \(\fg\) not involving \(\alpha_t\) (equivalently \((\xi,\omega_t)=0\)).
  \item The weight spaces of the \(\fk\)-modules \(\upm\) are one-dimensional.
  \item The element \(H_{\omega_t}\) spans the center of \(\fl\) and acts as the scalar \(d_t\) in \(\up\).
  \item The highest weight of \(\um\) (respectively lowest weight of \(\up\)) is the restriction to the Cartan subalgebra of \(\fl\) of \(-\alpha_t\) (respectively \(\alpha_t\)).
  \end{enumerate}
\end{lem}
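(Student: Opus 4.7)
The plan is to derive all four parts from condition (f) of \cref{prop:irred-parabolic-conditions}, which asserts $\cS = \Pi \setminus \{\alpha_t\}$ with $\alpha_t$ having coefficient exactly $1$ in the highest root of $\fg$.

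For (a), a positive root $\beta$ lies in $\Delta(\up) = \Delta^+(\fg) \setminus \Delta(\fl)$ iff the coefficient of $\alpha_t$ in its expansion in simple roots is at least $1$; the cominuscule hypothesis bounds this coefficient above by $1$, so it must equal $1$, and writing $\beta = \alpha_t + \xi$ gives the desired form. The equivalence with $(\xi, \omega_t) = 0$ is immediate from $(\alpha_i, \omega_j) = \delta_{ij}d_i$. Part (d) then follows directly: the same expression shows $\alpha_t \in \Delta(\up)$, while every other radical root exceeds it by a nonzero nonnegative sum of simple roots of $\fk$; hence $\alpha_t$ is the lowest weight of $\up$ as an $\fl$-module, and negation gives the highest weight $-\alpha_t$ of $\um$.

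For (b), the root spaces $\fg_\beta \subseteq \up$ are one-dimensional because $\fg$ is simple, and the $\fk$-weight of $\fg_\beta$ is the restriction $\beta|_{\fh \cap \fk}$. If two radical roots $\beta_1, \beta_2$ agree after restriction, then by (a) their difference $\beta_1 - \beta_2$ lies in $\spn\{\alpha_i : i \neq t\}$ and vanishes on $\fh \cap \fk = \spn\{H_{\alpha_i} : i \neq t\}$; nondegeneracy of the Cartan matrix of $\fk$ then forces $\beta_1 = \beta_2$. For (c), since $\fl$ is reductive with Cartan $\fh$, the center $Z(\fl)$ lies in $\fh$ and consists of those $H$ with $\alpha_i(H) = 0$ for all $i \neq t$, a one-dimensional condition satisfied by $H_{\omega_t}$. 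The scalar by which $H_{\omega_t}$ acts on $\fg_{\alpha_t + \xi} \subseteq \up$ is then $(\alpha_t + \xi, \omega_t) = d_t + 0 = d_t$ using (a).

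None of these steps presents a genuine obstacle; all four assertions reduce to the coefficient-one property of $\alpha_t$, supplemented in (b) by the standard fact that the Cartan matrix of the semisimple subalgebra $\fk$, being a principal submatrix of $(a_{ij})$ of Cartan-matrix type in its own right, is nondegenerate.
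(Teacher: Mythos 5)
Your proof is correct, and since the paper states \cref{lem:cominuscule-properties} without proof (as standard background on cominuscule parabolics), your derivation of all four parts from the coefficient-one property in \cref{prop:irred-parabolic-conditions}(f) is exactly the expected argument. The only implicit ingredient worth flagging is that in (d) you use the irreducibility of \(\upm\) as \(\fl\)-modules (also part of \cref{prop:irred-parabolic-conditions}) to speak of \emph{the} lowest/highest weight, but that is available by hypothesis and does not constitute a gap.
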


\subsection{The quantized enveloping algebra \(\uqg\)}
\label{sec:enveloping-alg}

Fix a real deformation parameter \(q > 1\).
Let \(\uqg\) be the quantized enveloping algebra in the form denoted \(U_q(\fg, \cP)\) in \cite{Jan96}*{\S 4.5}.
In particular, there are generators \(E_j,F_j,K_\lambda\) for \(j = 1,\dots,r\) and \(\lambda \in \cP\), and we set \(K_j \eqdef K_{\alpha_j}\).
We refer to \cite{Jan96}*{\S 4.3, \S 4.5} for the explicit relations.
There is a Hopf algebra structure on \(\uqg\) such that the \(K_\lambda\) are grouplike and the coproducts of the generators \(E_j,F_j\) are given by
\[
\Delta(E_j) = E_j \otimes 1 + K_j \otimes E_j, \quad \Delta(F_j) = F_j \otimes K_j^{-1} + 1 \otimes F_j.
\]
This uniquely determines the counit and antipode.
We give \(\uqg\) the \(\ast\)-structure known as the compact real form, given
 on the generators by
\[
E_j^\ast = K_jF_j, \quad F_j^\ast = E_j K_j^{-1}, \quad K_\lambda^\ast = K_\lambda.
\]
See \cite{KliSch97}*{\S 6.1.7} for a discussion of real forms.
(The original reference for the classification of real forms is 
\cite{Twi92}. Note that the two sources use opposite coproducts 
(the one in \cite{Twi92} is the same as ours) but the 
\(\ast\)-structures are nevertheless identical.)

We denote by \(\uql\) the Hopf \(\ast\)-subalgebra generated by all \(K_\lambda\) for \(\lambda \in \cP\) together with the \(E_j,F_j\) for \(j \neq t\).
Note that \(K_{\omega_t}\) is central in \(\uql\).

\subsection{Type 1 representations}
\label{sec:representations}

For each \(\lambda \in \pplus\) we denote by \(V_\lambda\) the finite-dimensional irreducible Type~1 representation of \(\uqg\) of highest weight \(\lambda\), and we fix a highest weight vector \(v_\lambda\) in this representation.
Whenever we speak of representations of \(\uqg\) or \(\uql\), we will always implicitly mean representations of Type~1.
By slight abuse of notation we also denote by \(V_\lambda\) the irreducible representation of \(\fg\) with highest weight \(\lambda\).
Following \cite{BerZwi08}, we denote by \(\of\) the category of finite-dimensional Type~1  \(\uqg\)-modules.
For \(V \in \of\), there is a decomposition \(V \cong \bigoplus_{j}V_{\lambda_j}\).

For each \(\lambda \in \pplus\) there is a unique positive-definite Hermitian inner product \(\langle \cdot,\cdot \rangle\) on \(V_\lambda\), conjugate-linear in the first variable, that satisfies \(\langle v_\lambda,v_\lambda \rangle = 1\) and that is invariant under the action of \(\uqg\) in the sense that
\[ \langle x \rhd v, w \rangle = \langle v, x^* \rhd w \rangle  \]
for all \(v,w \in V_\lambda\).
Decomposing an arbitrary \(V \in \of\) into simple submodules induces an inner product on \(V\), and furthermore choosing inner products on \(V,W \in \of\) induces inner products on \(V \otimes W\) and \(W \otimes V\).

\subsection{Braidings}
\label{sec:braidings}

The category \(\of\) is a braided monoidal category \cite{KliSch97}*{Chapter~8}.
For \(V,W \in \of\) we denote by \(\rhat_{VW} \colon V \otimes W \to W \otimes V\) the standard braiding.
The braiding is uniquely determined by the fact that, for weight vectors \(v \in V\) and \(w \in W\), we have
\begin{equation*}
  \label{eq:braiding-weight-vectors}
  \rhat_{VW}(v \otimes w) = q^{(\wt(v),\wt(w))} w \otimes v + \sum_i w_i \otimes v_i,
\end{equation*}
where \(\wt(w_i) \succ \wt(w)\) and \(\wt(v_i) \prec \wt(v)\) for all \(i\).

The braidings \(\rhat_{V,V}\) are diagonalizable, and all eigenvalues are of the form \(\pm q^{a_j}\) for certain \(a_j \in \bbQ\) \cite{KliSch97}*{Corollary~8.23}.
Moreover, the braidings \(\rhat_{V,W}\) are well-behaved with respect to duality in the sense that \((\rhat_{V,W})^{\tr} = \rhat_{V^\ast,W^\ast}\), where the superscript \(\tr\) denotes the dual map (or transpose), and we have identified \((V \otimes W)^\ast \cong W^\ast \otimes V^\ast\) as representations of \(\uqg\).

Similarly, fixing invariant inner products on \(V\) and \(W\) as in \cref{sec:representations}, the braidings are well behaved with respect to adjoints: one can show that \((\rhat_{V,W})^\ast = \rhat_{W,V}\) by noting that both are module maps and that they agree on highest weight vectors.

\subsection{Coboundary structure}
\label{sec:coboundary-structure}

The notion of a coboundary structure on a monoidal category, introduced by Drinfeld in \cite{Dri89}*{\S 3}, is similar to a braiding in the sense that it provides a natural isomorphism \(V \otimes W \overset{\sim}{\longrightarrow} W \otimes V\) for each ordered pair of objects in the category.
While the axioms are less familiar than those for the braided structure, the coboundary structure allows for a cleaner definition of the quantum symmetric and exterior algebras.
\begin{dfn}
  \label{dfn:coboundary-category}
  A \emph{coboundary category} is a monoidal category \(\cC\) together with a natural isomorphism \(\sigma_{VW} : V \otimes W \to W \otimes V\) for all \(V,W \in \cC\) satisfying
  \begin{enumerate}[(i)]
  \item (symmetry axiom) \(\sigma_{WV} \sigma_{VW} = \id_{V \otimes W}\);
  \item (cactus axiom) For all \(U,V,W \in \cC\) the following diagram commutes:
    \begin{equation}
      \label{eq:coboundary-axiom}
      \begin{CD}
        U \otimes V \otimes W @>{\sigma_{UV} \otimes \id}>> V \otimes U \otimes W\\
        @V{\id \otimes \sigma_{VW}}VV @VV{\sigma_{V \otimes U,W}}V\\
        U \otimes W \otimes V @>>{\sigma_{U,W \otimes V}}> W \otimes V \otimes U
      \end{CD}
    \end{equation}
  \end{enumerate}
  Following the terminology of \cite{KamTin09}, we refer to the maps \(\sigma_{VW}\) as \emph{commutors}.
\end{dfn}

Polar decomposition of the braidings leads to a coboundary structure on \(\of\).
In \cite{KamTin09} the authors develop this at the level of suitable completions of \(\uqg\) and its tensor square.
We work here just at the level of representations.
For \(V,W \in \of\), let
\begin{equation}
  \label{eq:def-of-commutors}
  \rhat_{VW} = \sigma_{VW} \bigl( (\rhat_{VW})^* \rhat_{VW} \bigr)^{1/2}
\end{equation}
be the polar decomposition.
Each \(\sigma_{VW}\) is unitary since \(\rhat_{VW}\) is invertible.
We record here the properties of the maps \(\sigma_{VW}\) that we will require:
\begin{prop}
  \label{prop:coboundary-struct-on-uqg-modules}
  The maps \((\sigma_{VW})_{V,W \in \of}\) form a coboundary structure on \(\of\).
  Moreover:
  \begin{enumerate}[(a)]
  \item The diagram
    \[
    \begin{CD}
      (W \otimes V)^* @>{(\sigma_{VW})^{\mathrm{tr}}}>> (V \otimes W)^*\\
      @V{\cong}VV @VV{\cong}V\\
      V^* \otimes W^* @>>{\sigma_{V^* W^*}}> W^* \otimes V^*
    \end{CD}
    \]    
    commutes, where the vertical arrows are the natural isomorphisms \((X \otimes Y)^* \cong Y^* \otimes X^*\) of finite-dimensional \(\uqg\)-modules and \((\sigma_{VW})^{\mathrm{tr}}\) is the transpose of the commutor \(\sigma_{VW}\).
  \item Each \(\sigma_{VW}\) is unitary by construction, and \((\sigma_{VW})^* = \sigma_{WV}\).
  \item In particular, \(\sigma_{VV}\) is a self-adjoint unitary operator with the same eigenspaces as \(\rhat_{VV}\), and \(\sigma_{VV}\) has eigenvalue \(\pm 1\) on an eigenspace according as \(\rhat_{VV}\) has eigenvalue \(\pm q^a\) for some \(a \in \bbQ\).
  \end{enumerate}
\end{prop}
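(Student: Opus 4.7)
The plan is to derive all three assertions from the polar decomposition \cref{eq:def-of-commutors} together with the adjoint and transpose properties of $\rhat_{V,W}$ recalled in \cref{sec:braidings}, and to appeal to \cite{KamTin09} for the cactus axiom, which is the only substantive piece. Since each $\rhat_{V,W}$ is invertible, the polar decomposition determines $\sigma_{V,W}$ uniquely and forces it to be unitary, which supplies the first half of (b).

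For the identity $\sigma_{V,W}^\ast = \sigma_{W,V}$, I take the adjoint of $\rhat_{V,W} = \sigma_{V,W}\,|\rhat_{V,W}|$ and substitute $\rhat_{V,W}^\ast = \rhat_{W,V}$ from \cref{sec:braidings} to get
\[
\rhat_{W,V} = |\rhat_{V,W}|\,\sigma_{V,W}^\ast = \sigma_{V,W}^\ast \bigl( \sigma_{V,W}\,|\rhat_{V,W}|\,\sigma_{V,W}^\ast \bigr).
\]
The bracketed factor is positive, being a unitary conjugate of a positive operator, so the right-hand side is a polar decomposition of $\rhat_{W,V}$. Uniqueness therefore gives $\sigma_{W,V} = \sigma_{V,W}^\ast$, which completes (b); the symmetry axiom (i) is then immediate from $\sigma_{V,W}^\ast \sigma_{V,W} = \id$.

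For (c), $\rhat_{V,V}^\ast = \rhat_{V,V}$ shows that $\rhat_{V,V}$ is self-adjoint and hence diagonalizable; by \cite{KliSch97}*{Corollary~8.23} its eigenvalues are of the form $\pm q^{a_j}$. On a $(\pm q^{a_j})$-eigenspace $|\rhat_{V,V}|$ acts as the scalar $q^{a_j}$, so $\sigma_{V,V}$ acts as $\pm 1$, as claimed. For (a), I would first check that with respect to the dual inner products making $(X \otimes Y)^\ast \cong Y^\ast \otimes X^\ast$ isometric, the transpose preserves both unitarity and positivity. Transposing $\rhat_{V,W} = \sigma_{V,W}\,|\rhat_{V,W}|$ and invoking $\rhat_{V,W}^{\tr} = \rhat_{V^\ast,W^\ast}$ then produces a decomposition of $\rhat_{V^\ast,W^\ast}$ of the form (positive)$\cdot$(unitary); rewriting this in standard left-polar-decomposition form (by absorbing the unitary through its conjugation of the positive factor, exactly as in the paragraph above) and applying uniqueness identifies the unitary with $\sigma_{V^\ast,W^\ast}$, which is precisely the commutativity of the diagram.

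The main obstacle is the cactus axiom (ii). Rather than attempt a direct verification, I would simply invoke \cite{KamTin09}: the commutors there are constructed by the same polar-decomposition recipe applied to the universal $R$-matrix in a suitable completion of $\uqg \otimes \uqg$, and the cactus axiom is proved in that setting. Specialising their identity to any fixed triple $U,V,W \in \of$ yields (ii) as stated here.
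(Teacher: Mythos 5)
The paper itself records \cref{prop:coboundary-struct-on-uqg-modules} without proof, deferring to \cite{KamTin09} for the coboundary structure, so there is no in-paper argument to compare against; judged on its own terms, your proof of (b), the symmetry axiom, and (c) is correct (it uses exactly the facts \((\rhat_{V,W})^*=\rhat_{W,V}\) and the spectrum statement from \cite{KliSch97} that the paper records in \cref{sec:braidings}), and outsourcing the cactus axiom to \cite{KamTin09} is what the paper intends.

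There is, however, a genuine gap in your argument for (a), in the final appeal to uniqueness. Your transposition produces a factorization of \(\rhat_{V^*W^*}\) into a unitary times a positive operator \emph{with respect to the dual inner products}, i.e.\ those transported from \(V\otimes W\) and \(W\otimes V\) by \(v\mapsto\langle v,\cdot\rangle\). But \(\sigma_{V^*W^*}\) is, by the definition \eqref{eq:def-of-commutors}, the unitary factor of the polar decomposition taken with respect to the \emph{invariant} inner products on \(V^*\otimes W^*\) and \(W^*\otimes V^*\) (every object of \(\of\) carries such a product, as in \cref{sec:representations}), and these are genuinely different inner products on the dual spaces: for the compact real form \(S(x)^*=S^{-1}(x^*)\neq S(x^*)\), so the dual inner product fails invariance, the discrepancy being the action of \(K_{2\rho}\). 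Hence uniqueness of the polar decomposition relative to the dual inner products does not by itself identify your unitary factor with \(\sigma_{V^*W^*}\). The gap is closable: with respect to \emph{either} inner product the adjoint of \(\rhat_{V^*W^*}\) is \(\rhat_{W^*V^*}\) (for the invariant products this is the compatibility recorded in \cref{sec:braidings}; for the dual products it follows by transposing \((\rhat_{V,W})^*=\rhat_{W,V}\)), so both polar decompositions have the same positive factor, namely the spectrally defined square root of \(\rhat_{W^*V^*}\rhat_{V^*W^*}\), and therefore the same unitary factor. Equivalently, avoid inner products on the duals altogether by writing \(\sigma_{VW}=\rhat_{VW}\,(\rhat_{WV}\rhat_{VW})^{-1/2}\) with a spectral inverse square root, transposing this identity, and using \((\rhat_{V,W})^{\mathrm{tr}}=\rhat_{V^*,W^*}\) together with \(A\,f(BA)=f(AB)\,A\) for invertible \(A\); this same identification of the polar-decomposition commutor with the action of Drinfeld's unitarized \(R\)-matrix is also what legitimizes specializing the cactus axiom from \cite{KamTin09} to the representation level.
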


\section{Quantum symmetric and exterior algebras}
\label{sec:quantum-symm-and-ext-algs}

In this section, we discuss some properties of the quantum symmetric and exterior algebras of the representations of \(\uql\) whose classical limits are the  nilradical \(\up\) and its dual \(\um\).
These algebras were introduced by Berenstein and Zwicknagl in \cite{BerZwi08} and studied further in \cite{Zwi09,ChiTuc12}.

Zwicknagl has shown \cite{Zwi09}*{Main Theorem~5.6} that the quantum symmetric algebra \(\symq(\up)\) coincides, as a \(\uql\)-module algebra, with a certain ``twisted'' quantum Schubert cell inside \(\uqg\).
This embedding into the ambient quantized enveloping algebra will give us more precise information about the relations in the quantum symmetric algebra.
In \cref{sec:quantum-exterior-algebras-are-frobenius} we will then transfer this information to the quantum exterior algebra \(\extq(\um)\) by Koszul duality.

As \cite{Zwi09}*{\S 5} is not entirely consistent in its notation, in \cref{sec:berenstein-zwicknagl-quantum-symm-algebras} we go into some detail concerning quantum Schubert cells \(U(w)\) associated to arbitrary elements \(w\) of the Weyl group \(W\) of \(\fg\).
In \cref{sec:twisted-quantum-schubert-cells}  we define a particular element \(\wl \in W\) associated to a cominuscule parabolic and examine \(U(\wl)\), along with its twisted version \(U'(\wl)\).
This turns out to be isomorphic to \(\symq(\up)\) as a \(\uql\)-module algebra, where \(\uql\) acts on \(U'(\wl)\) via the adjoint action of \(\uqg\).

\subsection{Quantization of the nilradical}
\label{sec:quantization-of-nilradical}

Since \(\upm\) are finite-dimensional irreducible representations of \(\fl\), there are corresponding irreducible representations of \(\uql\), which by abuse of notation we denote also by \(\upm\).
As \(\fl\) is reductive rather than semisimple, in addition to the highest weight we must also specify the action of the central generator \(K_{\omega_t}\) of \(\uql\): \(K_{\omega_t}\) acts as \(q_t^{\pm 1} = q^{\pm d_t}\) in \(\upm\), respectively.
We fix a nondegenerate \(\uql\)-invariant pairing \(\langle \cdot,\cdot \rangle: \um \otimes \up \to \bbC\), which defines an isomorphism of \(\uql\)-modules \(\um \cong \up^\ast\).

\subsection{The algebras \(\symq(\up)\) and \(\extq(\up)\)}
\label{sec:berenstein-zwicknagl-quantum-symm-algebras}

Following \cite{BerZwi08}*{Definition 2.7}, the \emph{quantum (or braided) symmetric algebra} of \(\up\) is defined as
\begin{equation}
  \label{eq:quantum-sym-alg-def}
  \symq(\up) = T(\up) / \langle \ker(\sigma_{\up,\up} + \id) \rangle,
\end{equation}
where \(\sigma_{\up,\up}\) is the commutor coming from the coboundary structure on the category of finite-dimensional \(\uql\)-modules.
Similarly, the \emph{quantum exterior algebra} of \(\up\) is given by
\begin{equation}
  \label{eq:quantum-ext-alg-def}
  \extq(\up) = T(\up) / \langle \ker(\sigma_{\up,\up} - \id) \rangle.
\end{equation}
By definition, both \(\symq(\up)\) and \(\extq(\up)\) are quadratic algebras.
We denote their graded components by \(\symq^l(\up)\) and \(\extq^l(\up)\) for \(l \in \zp\), respectively.
As \(\sigma_{\up,\up}\) is a module map, both \(\ker(\sigma_{\up,\up} \pm \id)\) are in fact \(\uql\)-submodules of \(\up \otimes \up\), and hence \(\symq(\up)\) and \(\extq(\up)\) are  \(\uql\)-module algebras.
We denote multiplication in \(\symq(\up)\) and \(\extq(\up)\) by juxtaposition and by the symbol \(\wedge\), respectively.

\begin{rem}
  \label{rem:on-defs-of-sym-and-ext-algebras}
  According to \cref{prop:coboundary-struct-on-uqg-modules}(c), \(\ker(\sigma_{\up,\up} - \id)\) is the span of the eigenspaces of the braiding \(\rhat_{\up,\up}\) for \emph{positive} eigenvalues, i.e., those of the form \(q^a\), while \(\ker(\sigma_{\up,\up}+\id)\) is the span of the eigenspaces for \emph{negative} eigenvalues, i.e., those of the form \(-q^a\).
  Thus, speaking informally, the definitions of the quantum symmetric and exterior algebras become the classical ones when \(q=1\).
\end{rem}

One can define quantum symmetric and exterior algebras analogously for arbitrary finite-dimensional representations of quantized enveloping algebras; the ones defined above are particular cases of this construction.
Zwicknagl has shown that the quantum symmetric and exterior algebras associated to the abelian nilradicals \(\upm\) are much better behaved than those for arbitrary representations.
In particular:

\begin{prop}
  \label{prop:sq-facts-from-zwicknagl}
  With all notation as above, we have:
  \begin{enumerate}[(a)]
  \item The Hilbert series of the quantum symmetric algebra \(\symq(\up)\) is the same as that of the ordinary symmetric algebra \(S(\up)\).
  \item If \(\{ x_1,\dots,x_N \}\) is any basis for \(\up\), then the ordered monomials in these generators form a PBW basis for \(\symq(\up)\).
  \item The quantum symmetric algebra \(\symq(\up)\) is a Koszul algebra.
  \end{enumerate}
\end{prop}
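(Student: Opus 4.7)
The plan is to derive all three assertions from Zwicknagl's identification \cite{Zwi09}*{Main Theorem~5.6} of \(\symq(\up)\) with the twisted quantum Schubert cell \(U'(\wl)\) inside \(\uqg\), as already advertised in the introduction to this section (and to be developed in \cref{sec:twisted-quantum-schubert-cells}). The Weyl group element \(\wl\) will be chosen precisely so that the positive roots it sends to negative roots are the radical roots \(\Delta(\up)\); thus applying Lusztig's machinery to \(\wl\) will give structural information that specialises to the three statements.

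For part~(a), I would invoke Lusztig's PBW theorem for \(U(\wl)\): fix a reduced expression \(\wl = s_{i_1}\cdots s_{i_N}\) producing quantum root vectors \(E_{\beta_1},\dots,E_{\beta_N}\) indexed by the radical roots, and observe that ordered monomials in these vectors form a basis of \(U(\wl)\). Since the twist relating \(U(\wl)\) and \(U'(\wl)\) is a \(\uql\)-linear modification of the multiplication that preserves the grading induced by the generators, it does not alter graded dimensions. Counting monomials by total degree then yields the Hilbert series of the classical \(S(\up)\).

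For part~(b), I would argue that the PBW property transfers from the specific basis \(\{E_{\beta_i}\}\) to an arbitrary basis \(\{x_1,\dots,x_N\}\) of \(\up\). The formula for the braiding on weight vectors recalled in \cref{sec:braidings} shows that \(\rhat_{\up,\up}\) (and hence the commutor \(\sigma_{\up,\up}\)) acts on \(x_i \otimes x_j\) by a diagonal term proportional to \(x_j \otimes x_i\) plus strictly weight-raising corrections in the first factor. Consequently, in \(\symq(\up)\) the product \(x_j x_i\) equals \(q^{(\wt(x_i),\wt(x_j))} x_i x_j\) plus terms of the form \(x_k x_\ell\) with \(\wt(x_k) \succ \wt(x_i)\); iterating this rewriting terminates, and combined with the dimension count from (a) it gives linear independence.

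Part~(c) is the most delicate. I expect the main obstacle to be here: Koszulity does not follow formally from a PBW basis unless one also verifies a diamond/overlap condition. The plan is to use Priddy's criterion in the form proved by Berenstein--Zwicknagl \cite{BerZwi08}: one exhibits the quadratic relations of \(\symq(\up)\), inherited from the eigenspace decomposition of \(\sigma_{\up,\up}\), as a Gröbner-Shirshov basis with respect to the lexicographic order induced by the root ordering coming from the reduced expression for \(\wl\) above. The ordered monomials from (b) then become normal forms, and the absence of non-trivial overlap ambiguities at the cubic level can be read off from the explicit commutation relations between the quantum root vectors \(E_{\beta_i}\) inside \(\uqg\). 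If this direct verification proves too cumbersome case-by-case, a fallback is to invoke Koszulity of the quadratic dual \(\extq(\um)\) (whose Frobenius structure is treated in \cref{sec:quantum-exterior-algebras-are-frobenius}) together with the numerical criterion matching Hilbert series of \(\symq(\up)\) and \(\extq(\um)\).
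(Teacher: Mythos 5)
The paper's own proof is purely by citation: part~(a) is Zwicknagl's flatness result (\cite{Zwi09}*{Theorems~3.14 and~4.23}), part~(b) is deduced from (a) via \cite{BerZwi08}*{Proposition~2.28}, and part~(c) is immediate because every PBW algebra is Koszul (\cite{PolPos05}*{Chapter~4, Theorem~3.1}). Your route to (a), through the identification \(\symq(\up) \cong U'(\wl)\) and the De Concini--Kac--Procesi PBW basis of \(U(\wl)\), is sound in substance --- note only that \(U'(\wl) = T_{\wzl}(U(\wl))\) is the image of \(U(\wl)\) under an algebra automorphism of \(\uqg\) carrying generators to generators, not a ``modification of the multiplication'' --- but it carries a risk of circularity: you must check that the proof of \cite{Zwi09}*{Theorem~5.6} does not itself use the flatness of \(\symq(\up)\) that you are trying to establish; the paper sidesteps this by citing the flatness theorems directly.

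Two steps have genuine gaps. For (b), your rewriting argument uses the triangularity of \(\rhat\) on \emph{weight} vectors (\cref{sec:braidings}), so it can only treat weight bases ordered compatibly with that triangular structure; it cannot prove the assertion for an arbitrary basis, which is what is claimed. For a non-weight basis the triangularity is simply absent, and ordered monomials can in fact become linearly dependent: already for \(\fg = \fs\fl_3\), where \(\symq(\up)\) is a quantum plane with \(v_1 v_2 = q v_2 v_1\) in a weight basis, the basis \(x_1 = v_1 + v_2\), \(x_2 = q v_1 + v_2\) satisfies \(q x_1^2 - (q+1) x_1 x_2 + x_2^2 = 0\). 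So the general statement must come from the cited \cite{BerZwi08}*{Proposition~2.28} (whose exact hypotheses on the basis and its ordering should be checked), not from your rewriting scheme. For (c), your premise that a PBW basis does not yield Koszulity without a diamond/overlap verification is off: in the sense of \cite{PolPos05}, being a PBW algebra means precisely that the normal (i.e.\ ordered) monomials form a basis in \emph{every} degree, and that is exactly what (b) supplies for the distinguished weight basis, whose relations \eqref{eq:commutation-rels-for-quantum-root-vectors} have the required triangular leading terms; Priddy's theorem then gives Koszulity with no overlap computation, the diamond lemma being needed only when the dimensions are not already known. Your fallback for (c) is, moreover, both circular and insufficient: in this paper the Koszulity of \(\extq(\um)\) (\cref{cor:koszulity-of-sq}) is \emph{deduced from} that of \(\symq(\up)\), and matching of Hilbert series is a necessary but not sufficient numerical condition for Koszulity.
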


\begin{proof}
  Part (a) is a combination of Theorems~3.14 and~4.23 in \cite{Zwi09}.
  In view of (a), part (b) follows from Proposition~2.28 of \cite{BerZwi08}.
  Part (c) holds because any PBW algebra is Koszul \cite{PolPos05}*{Chapter~4, Theorem~3.1}.
\end{proof}

\subsection{Quadratic duality}
\label{sec:quadratic-duality}

One can make the analogous definitions of \(\symq(\um)\) and \(\extq(\um)\).
Since \(\up\) and \(\um\) are duals of one another, we have by \cref{prop:coboundary-struct-on-uqg-modules}(a) together with the involutivity of the commutors that
\begin{equation}
  \label{eq:quadratic-duality-between-symm-and-ext-algs}
  \symq(\up)^! \cong \extq(\um)
\end{equation}
(see \cite{BerZwi08}*{Proposition~2.11(c)}), where \(A^!\) denotes the \emph{quadratic dual} (or Koszul dual) of a quadratic algebra \(A\).

\begin{rem}
  \label{rem:on-def-of-koszul-dual}
  We note that the definition of the quadratic dual algebra in Chapter~1, Section~2  of \cite{PolPos05} must be modified slightly in our context so that the quadratic dual of a \(\uql\)-module algebra is again a module algebra.
  In particular, we identify \(\um \otimes \um\) with \((\up \otimes \up)^\ast\) via the pairing 
  \begin{equation}
    \label{eq:dual-pairing-of-tensor-squares}
    \langle y \otimes y',x \otimes x' \rangle = \langle y',x \rangle \langle y, x' \rangle
  \end{equation}
  for \(x,x' \in \up\) and \(y,y' \in \um\); it is this pairing which allows us to identify \(\sigma_{\um,\um}\) with the dual map of \(\sigma_{\up,\up}\) as in \cref{prop:coboundary-struct-on-uqg-modules}.
  The usual definition of quadratic dual algebra uses the 
  pairing 
  \[
  \langle y \otimes y', x \otimes x' \rangle = 
  \langle y,x \rangle \langle y',x' \rangle
  \] 
  rather than \eqref{eq:dual-pairing-of-tensor-squares}.
  The consequence of this choice is that \(A^!\) as defined using our convention is the \emph{opposite} algebra of the one defined in \cite{PolPos05}.
\end{rem}

\begin{cor}
  \label{cor:koszulity-of-sq}
  The quantum exterior algebra \(\extq(\um)\) is a Koszul algebra, and has the same Hilbert series as the ordinary exterior algebra \(\ext(\um)\).
\end{cor}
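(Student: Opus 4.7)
The plan is to deduce both claims directly from \cref{prop:sq-facts-from-zwicknagl} together with the quadratic duality isomorphism \eqref{eq:quadratic-duality-between-symm-and-ext-algs} and standard facts from the theory of Koszul algebras.

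First, Koszulity transfers under quadratic duality: a quadratic algebra \(A\) is Koszul if and only if \(A^!\) is Koszul (this is proved in \cite{PolPos05}*{Chapter 2}). Since \(\symq(\up)\) is Koszul by \cref{prop:sq-facts-from-zwicknagl}(c), the isomorphism \(\extq(\um) \cong \symq(\up)^!\) from \eqref{eq:quadratic-duality-between-symm-and-ext-algs} immediately implies that \(\extq(\um)\) is a Koszul algebra. The minor subtlety noted in \cref{rem:on-def-of-koszul-dual} (that our convention yields the opposite of the Polishchuk--Positselski dual) does not affect Koszulity, since an algebra and its opposite have the same Koszul property.

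Second, for the Hilbert series, I would invoke the standard Koszul duality identity
\[
h_{\symq(\up)}(t) \cdot h_{\extq(\um)}(-t) = 1,
\]
valid for any Koszul algebra and its quadratic dual (see \cite{PolPos05}*{Chapter 2, Corollary 2.2}). By \cref{prop:sq-facts-from-zwicknagl}(a), \(h_{\symq(\up)}(t) = h_{S(\up)}(t) = (1-t)^{-N}\), where \(N = \dim \up = \dim \um\). Solving the above identity yields \(h_{\extq(\um)}(t) = (1+t)^N = h_{\ext(\um)}(t)\), as desired.

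I do not anticipate a genuine obstacle here: both ingredients — preservation of Koszulity under quadratic duality and the Hilbert series reciprocity for Koszul duals — are entirely formal consequences of the corresponding Ext algebra interpretations, once the duality \(\extq(\um) \cong \symq(\up)^!\) has been secured in \cref{sec:quadratic-duality}. The only thing worth checking is that our modified pairing convention \eqref{eq:dual-pairing-of-tensor-squares} genuinely produces a quadratic dual in the sense needed for these theorems; this is automatic because passing to the opposite algebra changes neither the Hilbert series nor the Koszul property.
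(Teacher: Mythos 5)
Your proposal is correct and follows essentially the same route as the paper's proof: Koszulity of \(\extq(\um)\) comes from \(\symq(\up)\) being Koszul together with the quadratic duality \eqref{eq:quadratic-duality-between-symm-and-ext-algs} and the fact that the quadratic dual of a Koszul algebra is Koszul, while the Hilbert series claim follows from the Koszul duality relation between Hilbert series (both citing Chapter~2 of \cite{PolPos05}) combined with \cref{prop:sq-facts-from-zwicknagl}(a). Your explicit use of the functional equation \(h_{\symq(\up)}(t)\,h_{\extq(\um)}(-t)=1\) and the observation that passing to the opposite algebra affects neither property are just slightly more detailed versions of the same argument.
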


\begin{proof}
  The quadratic dual of a Koszul algebra is again Koszul by Corollary~3.3, Chapter~2 of \cite{PolPos05}.
  The Hilbert series of a Koszul algebra is determined by that of its quadratic dual algebra according to Corollary~2.2, Chapter~2 of \cite{PolPos05}.
  Since the Hilbert series of \(\symq(\up)\) coincides with that of \(\sym(\up)\), the Hilbert series of \(\extq(\um)\) coincides with that of the quadratic dual \(\ext(\um)\) of \(\sym(\up)\).
\end{proof}

\subsection{Linear duality}
\label{sec:linear-duality}

Now we show that \(\extq(\um)\) can be identified with the graded dual of  \(\extq(\up)\).
As in the classical setting, this can be done by embedding \(\extq(\up)\) and \(\extq(\um)\) into the tensor algebras of \(\up\) and \(\um\), respectively.

\begin{dfn}[\cite{BerZwi08}*{Definition 2.1}]
  \label{dfn:antisymmetric-tensors}
  The spaces of \emph{quantum symmetric tensors} \(\symq^n \up \subseteq \up^{\otimes n}\) and  \emph{quantum antisymmetric tensors} \(\extq^n \up \subseteq \up^{\otimes n}\) are defined by 
  \begin{equation}
    \label{eq:antisymmetric-tensors-alternate-definition}
    \symq^n \up \eqdef \bigcap_{j=1}^{n-1} \ker(\sigma_{i} - \id), \quad \extq^n \up \eqdef \bigcap_{j=1}^{n-1} \ker(\sigma_{i} + \id),
  \end{equation}
  respectively, where \(\sigma_i\) is the operator on \(\up^{\otimes n}\) given by \(\sigma_{\up,\up}\) in the \((i,i+1)\) tensor factors and the identity in all others.
  The spaces \(\symq^n \um, \extq^n \um \subseteq \um^{\otimes n}\) are defined analogously.
\end{dfn}

It was shown in \cite{ChiTuc12}*{Proposition~3.2} that one has the
 decompositions of \(\uql\)-modules
\begin{equation}
  \label{eq:decomposition-of-tensor-power}
  \up^{\otimes n} = \extq^n \up \oplus \langle \symq^2 \up \rangle_n, \quad   \um^{\otimes n} = \extq^n \um \oplus \langle \symq^2 \um \rangle_n.
\end{equation}
As \(\symq^2 \upm\) are the spaces of relations in the quantum exterior algebras \(\extq(\upm)\), respectively, the restrictions of the quotient maps from \(\upm^{\otimes n}\) to \(\extq^n(\upm)\) lead to isomorphisms
\begin{equation}
  \label{eq:isos-of-asym-tensors-with-ext-powers}
  \pi^n_\pm : \extq^n \upm \stackrel{\sim}{\longrightarrow} \extq^n(\upm).
\end{equation}

The dual pairing of \(\um\) with \(\up\) extends to a nondegenerate \(\uql\)-invariant pairing \(\langle \cdot,\cdot \rangle : \um^{\otimes n} \otimes \up^{\otimes n} \to \bbC\) generalizing \eqref{eq:dual-pairing-of-tensor-squares}.
With this notation, we have:

\begin{prop}
  \label{prop:dual-of-exterior-algebra}
  For each \(n\), define \(\langle \cdot,\cdot \rangle : \extq^n(\um) \otimes \extq^n(\up) \to \bbC\) by
  \begin{equation}
    \label{eq:dual-pairing-of-exterior-algebras}
    \langle y, x \rangle \eqdef \langle (\pi^n_-)^{-1}(y), (\pi^n_+)^{-1}(x) \rangle,
  \end{equation}
  for \(y \in \extq(\um), x \in \extq(\up)\), where the right-hand side is the pairing of \(\um^{\otimes n}\) with \(\up^{\otimes n}\) introduced above. 
  Then \eqref{eq:dual-pairing-of-exterior-algebras} is nondegenerate and \(\uql\)-invariant, and hence induces an isomorphism \(\extq^n(\um) \cong \extq^n(\up)^\ast\).
\end{prop}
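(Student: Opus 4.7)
The plan is to reduce everything to the nondegenerate $\uql$-invariant pairing on $\um^{\otimes n} \otimes \up^{\otimes n}$ via the decompositions in \eqref{eq:decomposition-of-tensor-power}. Equivariance is essentially free once one notes that the isomorphisms $\pi^n_{\pm}$ are restrictions of $\uql$-module quotient maps from the tensor algebras onto the quantum exterior algebras, so the right-hand side of \eqref{eq:dual-pairing-of-exterior-algebras} is manifestly $\uql$-invariant as a composition of $\uql$-equivariant maps with the $\uql$-invariant pairing on tensor powers.

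The main content is nondegeneracy. First I would establish the key intertwining property: the operator $\sigma_i$ on $\um^{\otimes n}$ is the dual of $\sigma_i$ on $\up^{\otimes n}$ with respect to the natural pairing on tensor powers. This is a direct consequence of \cref{prop:coboundary-struct-on-uqg-modules}(a) applied with $V = W = \up$, combined with the identification of $\um \otimes \um$ with $(\up \otimes \up)^*$ via \eqref{eq:dual-pairing-of-tensor-squares}; the action in the remaining tensor slots is handled by the identity.

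Next I would exploit this duality to obtain a mutual orthogonality of the two summands in \eqref{eq:decomposition-of-tensor-power}: if $w \in \extq^n \um$ and $v \in \up^{\otimes(i-1)} \otimes \symq^2 \up \otimes \up^{\otimes(n-i-1)}$, then $\sigma_i v = v$ while $\sigma_i w = -w$, so
\[
\langle w, v \rangle = \langle w, \sigma_i v \rangle = \langle \sigma_i w, v \rangle = -\langle w, v \rangle,
\]
hence $\langle w, v \rangle = 0$. Summing over $i = 1, \dots, n-1$ gives $\langle \extq^n \um, \langle \symq^2 \up \rangle_n \rangle = 0$, and the symmetric argument yields $\langle \langle \symq^2 \um \rangle_n, \extq^n \up \rangle = 0$.

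Finally, with respect to the direct sum decompositions \eqref{eq:decomposition-of-tensor-power} the pairing on $\um^{\otimes n} \otimes \up^{\otimes n}$ becomes block diagonal, with blocks $\extq^n \um \otimes \extq^n \up$ and $\langle \symq^2 \um \rangle_n \otimes \langle \symq^2 \up \rangle_n$. Nondegeneracy of the ambient pairing then forces nondegeneracy of each block separately, and transporting through the isomorphisms $\pi^n_{\pm}$ of \eqref{eq:isos-of-asym-tensors-with-ext-powers} yields nondegeneracy of \eqref{eq:dual-pairing-of-exterior-algebras}. The only step requiring real care is the duality statement for the $\sigma_i$; once that is in hand, the rest is formal.
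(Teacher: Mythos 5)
Your argument is correct, and it follows the same basic reduction as the paper---transport everything through the isomorphisms \(\pi^n_\pm\), note that \(\uql\)-invariance is automatic, and use the duality of the commutors from \cref{prop:coboundary-struct-on-uqg-modules}(a)---but your nondegeneracy step is organized differently. The paper computes the relevant annihilator exactly: for \(n=2\) it uses involutivity to write \(\extq^2\up=\im(\sigma_{\up,\up}-\id)\), whence \((\extq^2\up)^\perp=\ker(\sigma_{\um,\um}-\id)=\symq^2\um\), and for general \(n\) it invokes the identity \(\bigl(\bigcap_i\ker(\sigma_i+\id)\bigr)^\perp=\sum_i\ker(\sigma_i+\id)^\perp\) to conclude that the annihilator of \(\extq^n\up\) is precisely \(\langle\symq^2\um\rangle_n\), from which nondegeneracy follows. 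You instead prove only the two cross-orthogonality relations \(\langle\extq^n\um,\langle\symq^2\up\rangle_n\rangle=0\) and \(\langle\langle\symq^2\um\rangle_n,\extq^n\up\rangle=0\) by an eigenvalue computation, and then use both direct-sum decompositions in \eqref{eq:decomposition-of-tensor-power} together with nondegeneracy of the ambient pairing on \(\um^{\otimes n}\otimes\up^{\otimes n}\) to force nondegeneracy of the block \(\extq^n\um\times\extq^n\up\); this trades the annihilator-of-an-intersection identity for a block-diagonality argument, and it is valid (arguably more elementary, at the cost of needing the decomposition on both sides, which \eqref{eq:decomposition-of-tensor-power} supplies anyway). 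One small caveat: since the identification \((\up\otimes\up)^*\cong\um\otimes\um\) of \eqref{eq:dual-pairing-of-tensor-squares}, and hence its extension to tensor powers, reverses the order of the factors, the transpose of \(\sigma_i\) on \(\up^{\otimes n}\) is the commutor acting in the mirrored slots of \(\um^{\otimes n}\) (i.e.\ \(\sigma_{n-i}\)), not \(\sigma_i\) itself; this does not affect your computation, because every element of \(\extq^n\um\) is a \((-1)\)-eigenvector of \emph{all} the \(\sigma_j\), but the intertwining statement should be phrased with that index shift.
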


\begin{proof}
  The pairing is \(\uql\)-invariant because it is a composition of \(\uql\)-module maps with a \(\uql\)-invariant pairing.
  In view of \eqref{eq:isos-of-asym-tensors-with-ext-powers}, it suffices to show that \((\extq^n \um)^\perp = \langle \symq^2 \up \rangle_n\), where \(\perp\) denotes the annihilator with respect to the dual pairing of \(\um^{\otimes n}\) with \(\up^{\otimes n}\).
  The cases when \(n=0,1\) are clear.
  
  For \(n=2\), as \(\sigma_{\up,\up}\) is involutive we have
  \[
  \extq^2\up = \ker(\sigma_{\up,\up} + \id) = \im(\sigma_{\up,\up} - \id).
  \]
  Then using \cref{prop:coboundary-struct-on-uqg-modules}(a) we obtain
  \[
  (\extq^2\up)^\perp = \im(\sigma_{\up,\up} - \id)^\perp = \ker(\sigma_{\um,\um} - \id) = \symq^2 \up.
  \]

  For \(n > 2\), note that
  \[
  \ker(\sigma_i + \id) = \up^{\otimes i-1} \otimes \extq^2\up \otimes \up^{\otimes n-i-1},
  \]
  and hence
  \[
  \ker(\sigma_i + \id)^\perp = \um^{\otimes i-1} \otimes \symq^2\um \otimes \um^{\otimes n-i-1}.
  \]
  Finally, this gives
  \[
  (\extq^n\up)^\perp = \biggl( \bigcap_{i=1}^n \ker(\sigma_i + \id) \biggr)^\perp = \sum_{i=1}^n \ker(\sigma_i + \id)^\perp =  \langle \symq^2 \up \rangle_n.
  \]
  This completes the proof.
\end{proof}

\subsection{The algebra \(U(w)\)}
\label{sec:quantum-schubert-cell}

We now recall some definitions and results concerning Weyl groups and parabolic subgroups.
Let \(W\) be the Weyl group of \(\fg\), and let \(s_i \in W\) be the reflection corresponding to the simple root \(\alpha_i\).

\begin{dfn}
  \label{dfn:Phi-w}
  For any word \(w \in W\), define \(\Phi(w) \subseteq \Delta^+(\fg)\) by
  \begin{equation}
    \label{eq:Phi-w-def}
    \Phi(w) \eqdef \Delta^+(\fg) \cap w(\Delta^-(\fg)),
  \end{equation}
  so that \(\Phi(w)\) is the set of positive roots \(\beta\) such that \(w^{-1} (\beta)\) is negative.  
\end{dfn}

The following result can be found, for example, in \S 1.7 of \cite{Hum90}.
Note, however, that our \(\Phi(w)\) is \(\Pi(w^{-1})\) in Humphreys' notation.

\begin{lem}
  \label{lem:Phi-w-recipe}
  Given any reduced expression \(w = s_{i_1} \dots s_{i_m}\) for \(w\), the sequence
  \begin{equation}
    \label{eq:Phi-w-recipe}
    s_{i_1} \dots s_{i_{k-1}}(\alpha_{i_k}), \quad 1 \leq k \leq m
  \end{equation}
  consists of \(m\) distinct positive roots, and exhausts \(\Phi(w)\).
  In particular, \(\ell(w) = \abs{\Phi(w)}\), where \(\ell\) is the word-length function of the Weyl group with respect to the generators \(\{ s_i \}\).
\end{lem}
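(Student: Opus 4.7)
The plan is to proceed by induction on $m = \ell(w)$, which is the classical argument for this well-known Weyl group fact. The base cases $m = 0, 1$ are immediate: $\Phi(e) = \emptyset$, and $\Phi(s_i) = \{\alpha_i\}$ because the simple reflection $s_i$ sends $\alpha_i$ to $-\alpha_i$ and permutes $\Delta^+(\fg) \setminus \{\alpha_i\}$ among themselves. For the inductive step, set $w' = s_{i_1} \cdots s_{i_{m-1}}$, which is a reduced expression of length $m - 1$, and denote $\beta_k = s_{i_1} \cdots s_{i_{k-1}}(\alpha_{i_k})$ as in the statement.

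The central computation compares $w(\Delta^-(\fg))$ with $w'(\Delta^-(\fg))$. Since $s_{i_m}(\Delta^-(\fg))$ differs from $\Delta^-(\fg)$ only by interchanging $\alpha_{i_m}$ and $-\alpha_{i_m}$, applying $w'$ shows that $w(\Delta^-(\fg)) = w'(\Delta^-(\fg))$ with $-\beta_m$ removed and $\beta_m$ inserted. Intersecting with $\Delta^+(\fg)$ and combining with the inductive description $\Phi(w') = \{\beta_1, \ldots, \beta_{m-1}\}$, I obtain $\Phi(w) = \Phi(w') \cup \{\beta_m\}$, provided $\beta_m$ is positive. Positivity of $\beta_m = w'(\alpha_{i_m})$ follows from the standard length criterion $\ell(v s_i) > \ell(v) \Leftrightarrow v(\alpha_i) > 0$: if $w'(\alpha_{i_m})$ were negative, then $\ell(w) < \ell(w')$, contradicting reducedness of the given expression.

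The remaining step, and the main technical obstacle, is distinctness. Suppose $\beta_k = \beta_m$ for some $k < m$; rearranging gives $s_{i_{k+1}} \cdots s_{i_{m-1}}(\alpha_{i_m}) = -\alpha_{i_k}$. Applying the length criterion to $u = s_{i_{k+1}} \cdots s_{i_{m-1}}$ yields $\ell(u s_{i_m}) = \ell(u) - 1 = m - k - 2$. On the other hand, any connected subword of a reduced expression is itself reduced, since otherwise a shorter expression for it could be substituted back to shorten $w$; hence $u s_{i_m} = s_{i_{k+1}} \cdots s_{i_m}$ is reduced of length $m - k$, a contradiction. The length--sign criterion on which both arguments rest would be established as a preliminary lemma, also by induction, by showing that $|\Phi(v s_i)| = |\Phi(v)| \pm 1$ according to the sign of $v(\alpha_i)$ and combining with the base case to conclude $\ell(v) = |\Phi(v)|$ for every $v \in W$; this last identity also yields the final assertion $\ell(w) = |\Phi(w)|$ of the lemma.
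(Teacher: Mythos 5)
Your main induction is the standard textbook argument, and the paper itself offers no proof of this lemma at all -- it simply cites Humphreys \S 1.7, where the full machinery (the lemma that $s_i$ permutes $\Delta^+(\fg)\setminus\{\alpha_i\}$, the deletion condition, and $\ell(w)=\abs{\Phi(w)}$) is developed. Your inductive step $\Phi(w)=\Phi(w')\cup\{\beta_m\}$ is correct, and in fact your separate distinctness argument is unnecessary: since $\beta_m=w'(\alpha_{i_m})\in w'(\Delta^+(\fg))$ while $\Phi(w')\subseteq w'(\Delta^-(\fg))$, the new root automatically does not occur among $\beta_1,\dots,\beta_{m-1}$.

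The genuine gap is in your last two sentences, where you sketch the proof of the length--sign criterion on which everything rests. From the fact that $s_i$ permutes $\Delta^+(\fg)\setminus\{\alpha_i\}$ one does get $\abs{\Phi(vs_i)}=\abs{\Phi(v)}\pm 1$ according to the sign of $v(\alpha_i)$, but ``combining with the base case'' only yields the inequality $\abs{\Phi(v)}\le\ell(v)$. The reverse inequality -- equivalently the direction of the criterion you actually use, namely that reducedness of $s_{i_1}\cdots s_{i_m}$ forces $w'(\alpha_{i_m})>0$ -- does not follow from this counting: to run the obvious induction on $\ell$ you would need to know that the count increases at the last letter of a reduced word, which is precisely the positivity of $\beta_m$ you invoked the criterion to obtain, so as written the route is circular. (The alternative induction on $\abs{\Phi(v)}$ founders on its base case, since $\abs{\Phi(v)}=0\Rightarrow v=e$ is the simple-transitivity statement, which is of the same depth.) The missing ingredient is the deletion/exchange argument via $v s_{\alpha} v^{-1}=s_{v(\alpha)}$ (or, alternatively, the geometric argument with chambers), as in Humphreys \S 1.6--1.8. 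You should either cite the criterion and $\ell(w)=\abs{\Phi(w)}$ as known results -- which is exactly what the paper does -- or supply that deletion argument; with it, your induction (and also your distinctness step, which uses the same criterion) goes through.
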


\begin{dfn}
  \label{dfn:quantum-schubert-cells}
  Retaining the notation from \cref{lem:Phi-w-recipe}, the \emph{quantum Schubert cell} \(U(w)\) is defined to be the subalgebra of \(\uqg\) generated by the elements
  \begin{equation}
    \label{eq:quantum-schubert-cell-generators}
    T_{i_1} \dots T_{i_{k-1}}(E_{i_k}), \quad 1 \le k \le m,
  \end{equation}
  where the \(T_i\) are the automorphisms of \(\uqg\) given by the action of the braid group \(B_\fg\); we use the conventions of \cite{Jan96}*{Chapter~8} here.
\end{dfn}

\begin{rem}
  \label{rem:on-the-quantum-schubert-cells}
  Although the generators \eqref{eq:quantum-schubert-cell-generators} depend on the choice of reduced expression for \(w\), the algebra \(U(w)\) is independent of this choice (see \S 9.3 of \cite{ConPro93}, for example).
  The quantum Schubert cells were introduced in \cite{ConKacPro95} and are hence often called De Concini--Kac--Procesi algebras.
  Zwicknagl shows in \S 5 of \cite{Zwi09} that \(U(w)\) can also be defined as 
  \begin{equation}
    \label{eq:alternate-def-of-quantum-schubert-cell}
    U(w) = T_w(\uqbm) \cap \uqnp
  \end{equation}
  (although note that the \(w^{-1}\) appearing in Definition~5.2 of \cite{Zwi09} must be changed to \(w\), and Theorem~5.21(a) should say \(U'(w)\) rather than \(U(w)\)).
\end{rem}

\subsection{The algebra \(U'(w)\)}
\label{sec:twisted-quantum-schubert-cells}

Now we recall Zwicknagl's definition of the twisted quantum Schubert cell that we alluded to above.
\begin{dfn}
  \label{dfn:twisted-quantum-schubert-cell}
  For an arbitrary element \(w \in W\), the \emph{twisted quantum Schubert cell} \(U'(w)\) is defined by
  \begin{equation}
    \label{eq:twisted-quantum-schubert-cell-dfn}
    U'(w) = T_{\wz w^{-1}} U(w).
  \end{equation}
  (Recall from \S 8.18 of \cite{Jan96} that \(T_{uv} = T_u T_v \) if \(\ell(uv) = \ell(u) + \ell(v)\), but this is not the case in general.)
\end{dfn}

We now examine a particular case of this construction.
Let \(\fl\) be the Levi factor of a cominuscule
parabolic subalgebra \(\fp \subseteq \fg\), and let
\(\Wl\) be the \emph{parabolic subgroup} of the Weyl
group \(W\) of \(\fg\), generated by the simple
reflections \(s_j\) for \(j \neq t\).  We refer to
\(\Wl\) as the Weyl group of \(\fl\), although properly
speaking it is the Weyl group of 
\(\fk\) (see \cite{Hum90}*{\S 5.5}).  Let \(\wzl\)
be the longest word of \(\Wl\), and define \(\wl = \wzl
\wz\), where \(\wz\) is the longest word of \(W\); we
refer to \(\wl\) as the \emph{parabolic element}.

\begin{lem}
  \label{lem:parabolic-element-properties}
  With the parabolic element \(\wl\) defined as above, we have:
  \begin{equation}
    \label{eq:factorization-of-w0}
    \wz = \wzl \wl,
  \end{equation}
  and moreover
  \begin{enumerate}[(a)]
  \item \(\ell(\wl) + \ell(\wzl) = \ell(\wz)\), where \(\ell\) is the word-length function on \(W\).
  \item The set \(\Delta(\up)\) is invariant under the action of \(\Wl\).
  \item The set of positive roots of \(\fg\) mapped by \(\wl^{-1}\) to negative roots is precisely the set of radical roots, i.e.\ \(\Phi(\wl) = \Delta(\up)\).
  \end{enumerate}
\end{lem}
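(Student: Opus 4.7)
The factorization \(\wz = \wzl \wl\) is immediate from the definition \(\wl = \wzl \wz\) together with the fact that \(\wzl\) is an involution, so \(\wzl \wl = \wzl^2 \wz = \wz\). With this in hand, I would prove parts (a)--(c) in the order (b), then (a), then (c), since (c) will fall out of (a) and (b) by a cardinality argument.

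For (b), the plan is to reduce to the case of a single simple reflection \(s_j\) with \(j \neq t\) and examine the effect on the coefficient of \(\alpha_t\) in a root. Writing \(\beta = \sum c_i \alpha_i\), the reflection formula gives \(s_j(\beta) = \beta - (\beta,\alpha_j^\vee)\alpha_j\), so only the coefficient of \(\alpha_j\) changes; in particular, the \(\alpha_t\)-coefficient is preserved. By \cref{lem:cominuscule-properties}(a), the radical roots are precisely the positive roots in which \(\alpha_t\) appears with coefficient \(1\), so \(s_j\) preserves \(\Delta(\up)\), and hence so does the subgroup \(\Wl\) they generate.

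For (a), the strategy is to show that \(\wl\) is the minimum-length representative of its coset in \(\Wl \backslash W\), from which the standard fact \(\ell(w' \wl) = \ell(w') + \ell(\wl)\) for \(w' \in \Wl\) (see \cite{Hum90}*{\S 1.10}) applied with \(w' = \wzl\) yields \(\ell(\wz) = \ell(\wzl) + \ell(\wl)\). Minimality amounts to checking that \(\wl^{-1}(\alpha_j) \in \Delta^+(\fg)\) for all \(j \neq t\). Compute \(\wl^{-1} = \wz \wzl\): for such \(j\), \(\alpha_j\) is a positive root of \(\fl\), so \(\wzl(\alpha_j) \in -\Delta^+(\fl) \subseteq \Delta^-(\fg)\), and then \(\wz\) sends this into \(\Delta^+(\fg)\). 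The possible obstacle here is simply citing the right form of the minimum-coset-representative lemma cleanly, but this is standard Coxeter-group theory.

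For (c), I would first show the inclusion \(\Delta(\up) \subseteq \Phi(\wl)\) directly: for \(\beta \in \Delta(\up)\), \(\wl^{-1}(\beta) = \wz \wzl(\beta)\); by (b) \(\wzl(\beta) \in \Delta(\up) \subseteq \Delta^+(\fg)\), and \(\wz\) then sends it into \(\Delta^-(\fg)\), so \(\beta \in \Phi(\wl)\). To upgrade this to equality, observe using \cref{lem:Phi-w-recipe} and part (a) that
\[
\abs{\Phi(\wl)} = \ell(\wl) = \ell(\wz) - \ell(\wzl) = \abs{\Delta^+(\fg)} - \abs{\Delta^+(\fl)} = \abs{\Delta(\up)},
\]
so the containment \(\Delta(\up) \subseteq \Phi(\wl)\) of finite sets of equal cardinality is an equality. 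The main (mild) obstacle is the coset-representative argument for (a); once that is in place the rest is bookkeeping.
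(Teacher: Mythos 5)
Your proposal is correct, but it is organized differently from the paper's proof at each step. For (b) the paper argues in one line that \(\Delta(\up)\) is the set of weights of the \(\fl\)-module \(\up\), hence \(\Wl\)-stable, whereas you check invariance of the \(\alpha_t\)-coefficient under each \(s_j\), \(j\neq t\) (note your argument does not even need the cominuscule hypothesis, only that a root with positive \(\alpha_t\)-coefficient is positive). For (a) the paper simply cites the identity \(\ell(w\wz)=\ell(\wz)-\ell(w)\) from \cite{Hum90}*{\S 1.8} with \(w=\wzl\); your route via the minimal-length coset representative of \(\Wl\wl\) (checking \(\wl^{-1}(\alpha_j)=\wz\wzl(\alpha_j)\in\Delta^+(\fg)\) for \(j\neq t\)) is a correct, if longer, derivation of the same fact, and the coset lemma you invoke is indeed standard. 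The most substantive difference is in (c): the paper avoids any counting by running the computation \(\wl^{-1}(\beta)=\wz(\wzl(\beta))\) on both pieces of the disjoint union \(\Delta^+(\fg)=\Delta^+(\fl)\sqcup\Delta(\up)\) — roots of \(\fl\) go to positive roots, radical roots go to negative roots — which gives the equality \(\Phi(\wl)=\Delta(\up)\) outright and does not use part (a) at all; you instead prove only the inclusion \(\Delta(\up)\subseteq\Phi(\wl)\) and close the gap with the cardinality count \(\abs{\Phi(\wl)}=\ell(\wl)=\ell(\wz)-\ell(\wzl)=\abs{\Delta(\up)}\) via \cref{lem:Phi-w-recipe}. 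Both are valid; the paper's case analysis is slightly cleaner since it decouples (c) from (a), while your counting argument is a perfectly good fallback whenever the complementary case is less transparent.
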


\begin{proof}
  Since \(\wzl\) is the longest word of a Weyl group, it is involutive, and hence \eqref{eq:factorization-of-w0} follows immediately from the definition of \(\wl\).
  Part (a) is Equation (2) in \S 1.8 of \cite{Hum90}.
  
  Part (b) follows from the fact that \(\up\) is invariant under the adjoint action of \(\fl\).

  To prove part (c), let \(\beta \in \Delta^+(\fg)\).
  By definition, we have \(\wl^{-1}(\beta) = \wz (\wzl(\beta))\).
  On the one hand, if \(\beta \in \Delta^+(\fl)\), then \(\wzl(\beta)\) is a negative root of \(\fl\) since \(\wzl\) is the longest word of \(\Wl\), and hence \(\wz(\wzl(\beta))\) is a positive root of \(\fg\).
  On the other hand, if \(\beta \in \Delta(\up)\), then \(\wzl(\beta) \in \Delta(\up)\) by part (b), so \(\wz(\wzl(\beta))\) is a negative root of \(\fg\).
  Hence \(\Phi(\wl) = \Delta(\up)\).
\end{proof}

\begin{conv}
  \label{conv:reduced-expression-of-w0}
  We fix a reduced expression for the longest word
  \begin{equation}
    \label{eq:reduced-expression-of-w0}
    \wz = s_{j_1}\dots s_{j_M} s_{j_{M+1}} \dots s_{j_{M+N}}
  \end{equation}
  which is compatible with \eqref{eq:factorization-of-w0} in the sense that \(\wzl = s_{j_1}\dots s_{j_M}\) and \(\wl = s_{j_{M+1}} \dots s_{j_{M+N}}\) for some \(M \geq 0\) and \(N \geq 1\).
  (The case \(M=0\) arises when \(\fp\) is a Borel subalgebra of \(\fs\fl_2\).)
  The expressions for \(\wzl\) and \(\wl\) are both reduced since they are intervals in the reduced expression \eqref{eq:reduced-expression-of-w0}.
\end{conv}

\begin{dfn}
  \label{dfn:radical-roots-sequence}
  For \(1 \le k \le N\), define a root \(\xi_k\) of \(\fg\) by
  \begin{equation}
    \label{eq:radical-roots-sequence}
    \xi_k \eqdef s_{j_1} \dots s_{j_M}s_{j_{M+1}} \dots s_{j_{M+k-1}}(\alpha_{j_{M+k}}) = \wzl s_{j_{M+1}} \dots s_{j_{M+k-1}}(\alpha_{j_{M+k}}).
  \end{equation}
\end{dfn}

The last result we need about the parabolic element is:
\begin{lem}
  \label{lem:alternate-recipe-for-radical-roots}
  The set \(\{ \xi_1, \dots, \xi_N \}\) is precisely the set \(\Delta(\up)\) of radical roots.
\end{lem}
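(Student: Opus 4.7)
The plan is to combine \cref{lem:Phi-w-recipe} (applied to two different reduced expressions) with part (b) of \cref{lem:parabolic-element-properties}, namely the $\Wl$-invariance of $\Delta(\up)$.

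First I would observe that applying \cref{lem:Phi-w-recipe} to the reduced expression for $\wl = s_{j_{M+1}}\dots s_{j_{M+N}}$ from \cref{conv:reduced-expression-of-w0} gives that the roots
\[
\eta_k \eqdef s_{j_{M+1}} \dots s_{j_{M+k-1}}(\alpha_{j_{M+k}}), \quad 1 \leq k \leq N,
\]
are distinct positive roots that exhaust $\Phi(\wl)$. By \cref{lem:parabolic-element-properties}(c), $\Phi(\wl) = \Delta(\up)$, so $\{\eta_1, \dots, \eta_N\} = \Delta(\up)$ and in particular $N = |\Delta(\up)|$.

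Next, comparing with the definition \eqref{eq:radical-roots-sequence}, one sees immediately that $\xi_k = \wzl(\eta_k)$ for each $k$. Since $\wzl \in \Wl$ and $\Delta(\up)$ is $\Wl$-invariant by \cref{lem:parabolic-element-properties}(b), each $\xi_k$ lies in $\Delta(\up)$; hence $\{\xi_1,\dots,\xi_N\} \subseteq \Delta(\up)$.

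Finally, to conclude equality it suffices to note that the $\xi_k$ are pairwise distinct, which follows from applying \cref{lem:Phi-w-recipe} to the full reduced expression \eqref{eq:reduced-expression-of-w0} for $\wz$: the whole sequence of roots produced there is a list of distinct positive roots, and $\xi_1,\dots,\xi_N$ are precisely the entries indexed by $M+1 \leq k \leq M+N$. Thus $\{\xi_1,\dots,\xi_N\}$ is a subset of $\Delta(\up)$ of cardinality $N = |\Delta(\up)|$, so the inclusion is an equality. I do not expect any step to be a substantial obstacle; the only point that warrants care is keeping straight which reduced expression one is applying \cref{lem:Phi-w-recipe} to, since the prefactor $\wzl$ in \eqref{eq:radical-roots-sequence} shifts the $\eta_k$ to the $\xi_k$ without affecting membership in $\Delta(\up)$.
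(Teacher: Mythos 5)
Your proposal is correct and follows essentially the same route as the paper: apply \cref{lem:Phi-w-recipe} to the reduced expression for \(\wl\), identify \(\Phi(\wl)=\Delta(\up)\) via \cref{lem:parabolic-element-properties}(c), and use the \(\Wl\)-invariance from part (b) to transport the roots by \(\wzl\). The only cosmetic difference is that the paper gets surjectivity directly from the fact that \(\wzl\) \emph{permutes} \(\Delta(\up)\) (being a bijection of the root set onto itself), whereas you argue distinctness via \cref{lem:Phi-w-recipe} for the full expression of \(\wz\) and then count; both are fine.
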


\begin{proof}
  Since \(\wl = s_{j_{M+1}} \dots s_{j_{M+N}}\), according to \cref{lem:Phi-w-recipe} the roots of the form \(s_{j_{M+1}} \dots s_{j_{M+k-1}}(\alpha_{j_{M+k}})\) for \(1 \leq k \leq N\) exhaust \(\Phi(\wl)\).
  By \cref{lem:parabolic-element-properties}(c) this set is exactly \(\Delta(\up)\).
  By \cref{lem:parabolic-element-properties}(b) the element \(\wzl\) permutes \(\Delta(\up)\), which completes the proof.
\end{proof}

Using the reduced expression \eqref{eq:reduced-expression-of-w0} for \(\wz\) and the action of the braid group on \(\uqg\), the \emph{quantum root vectors} of \(\uqg\) are defined as follows.
For \(\beta \in \Delta^+(\fg)\), the element \(E_\beta\) is given by
\begin{equation}
  \label{eq:quantum-root-vectors-def}
  E_\beta \eqdef T_{j_1} \dots T_{j_{k-1}}(E_{j_k}),
\end{equation}
where \(k\) is the unique index for which \(\beta = s_{j_1} \dots s_{j_{k-1}}(\alpha_{j_k})\).
We have \(E_{\alpha_j} = E_j\) for the simple roots \(\alpha_j\) by \cite{Jan96}*{Lemma~8.19}.
Similarly, \(F_\beta\) is defined by replacing \(E_{j_k}\) with \(F_{j_k}\) in \eqref{eq:quantum-root-vectors-def}.
The quantum root vectors \(\{E_\beta\}\) give rise to a PBW basis of \(\uqnp\), and similarly the \(\{F_\beta\}\) give a PBW basis of \(\uqnm\) \cite{Jan96}*{Theorem~8.24}.
The quantum root vectors depend on the choice of reduced decomposition of \(\wz\); we always use \eqref{eq:reduced-expression-of-w0}.

\begin{rem}
  \label{rem:gens-of-quantum-schub-cell-not-quantum-root-vectors}
  Using the reduced expression \(\wl = s_{j_{M+1}} \dots s_{j_{M+N}}\), the quantum Schubert cell \(U(\wl)\) is, by definition, generated by the elements
  \begin{equation}
    \label{eq:gens-of-quantum-schubert-cell}
    X_k \eqdef T_{j_{M+1}} \dots T_{j_{M+k-1}}(E_{j_{M+k}}), \quad 1 \leq k \leq N.
  \end{equation}
  Note that these generators are \emph{not} the quantum root vectors associated to the roots \(\{ \xi_k \}\).
  However, according to the definition \eqref{eq:quantum-root-vectors-def} of the quantum root vectors and the definition \eqref{eq:radical-roots-sequence} of the \(\xi_k\)'s, we do have
  \begin{equation}
    \label{eq:quantum-root-vectors-for-radical-roots}
    E_{\xi_k} = T_{j_1} \dots T_{j_M}T_{j_{M+1}} \dots T_{j_{M+k-1}}(E_{j_{M+k}}) = T_{\wzl}(X_k).
  \end{equation}
  Thus the twisted quantum Schubert cell \(U'(\wl) \eqdef T_{\wzl} U(\wl)\) is generated by the quantum root vectors \(\{ E_{\xi_1}, \dots, E_{\xi_N} \}\).
\end{rem}

Since \(\uqg\) is a Hopf algebra, it is a left module algebra over itself via the \emph{adjoint action} \(\ad(a)x = a_{(1)}xS(a_{(2)})\) (using Sweedler's notation for the coproduct of \(a\)).
The reason for introducing the twisted quantum Schubert cell is:

\begin{prop}[\cite{Zwi09}*{Theorem~5.6}]
  \label{prop:zwicknagl-main-theorem-5-6}
  With all notation as above, we have:
  \begin{enumerate}[(a)]
  \item The twisted quantum Schubert cell \(U'(\wl)\) is a quadratic algebra with the generators \(\{ E_{\xi_j} \}\) in degree one and relations of the form
    \begin{equation}
      \label{eq:commutation-rels-for-quantum-root-vectors}
      E_{\xi_l} E_{\xi_k} - q^{-(\xi_k,\xi_l)}E_{\xi_k} E_{\xi_l} = \sum_{k < i \leq j < l} c^{ij}_{kl}E_{\xi_i} E_{\xi_j}
    \end{equation}
    for \(k < l\), where \(c^{ij}_{kl} \in \bbQ[q^{\pm 1}]\).
  \item \(U'(\wl)\) is invariant under the adjoint action of \(\uql\).
  \item \(U'(\wl)\) is isomorphic to the quantum symmetric algebra \(\symq(\up)\) as a graded \(\uql\)-module algebra.
  \end{enumerate}
\end{prop}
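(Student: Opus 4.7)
The plan is to build on the Levendorskii--Soibelman commutation formula for quantum root vectors, transported through the braid group automorphism $T_{\wzl}$, together with a $\uql$-equivariance argument and the Hilbert series comparison of \cref{prop:sq-facts-from-zwicknagl}(a).

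For part (a), I would first invoke the Levendorskii--Soibelman formula (see \cite{Jan96}*{Chapter 8}) for the generators $X_k$ of $U(\wl)$ associated to the reduced expression $\wl = s_{j_{M+1}} \cdots s_{j_{M+N}}$. The convexity of the corresponding order on $\Phi(\wl)$ produces quadratic commutation relations of the form
\[
X_l X_k - q^{-(\wzl^{-1}\xi_k,\wzl^{-1}\xi_l)} X_k X_l \in \spn\bigl\{ X_i X_j : k < i \le j < l \bigr\}, \quad k < l,
\]
with coefficients in $\bbQ[q^{\pm 1}]$, and equips $U(\wl)$ with a PBW basis of ordered monomials in the $X_k$. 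Applying the algebra automorphism $T_{\wzl}$ sends $X_k$ to $E_{\xi_k}$ by \eqref{eq:quantum-root-vectors-for-radical-roots}; since $\wzl$ permutes $\Delta(\up)$ by \cref{lem:parabolic-element-properties}(b) and $(\cdot,\cdot)$ is $W$-invariant, the exponent rewrites as $-(\xi_k,\xi_l)$, yielding exactly \eqref{eq:commutation-rels-for-quantum-root-vectors}.

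For part (b), I would show that $V \eqdef \spn\{E_{\xi_1},\ldots,E_{\xi_N}\}$ is a $\uql$-submodule of $\uqg$ under the adjoint action. Each $E_{\xi_k}$ is a weight vector of weight $\xi_k \in \Delta(\up)$; the adjoint action of the Chevalley generators of $\uql$ shifts weights by simple roots of $\fl$, and $\Wl$ preserves $\Delta(\up)$ by \cref{lem:parabolic-element-properties}(b), so the resulting weights remain in $\Delta(\up)$. Combining this with the one-dimensionality of the radical weight spaces from \cref{lem:cominuscule-properties}(b) identifies $V$ with the irreducible $\uql$-module $\up$. Since $V$ generates $U'(\wl)$ and the adjoint action of $\uql$ is by algebra endomorphisms, $U'(\wl)$ itself is $\uql$-stable.

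For part (c), the $\uql$-module isomorphism $\up \xrightarrow{\sim} V$ extends by the universal property of the tensor algebra to a graded $\uql$-module algebra map $T(\up) \to U'(\wl)$; to factor this through $\symq(\up)$ one must verify that $\ker(\sigma_{\up,\up} + \id) \subset \up \otimes \up$ is killed. The relations obtained in part (a) span a $\uql$-submodule $R \subset \up \otimes \up$, and a character/dimension comparison combined with \cref{prop:sq-facts-from-zwicknagl}(a) forces $R = \ker(\sigma_{\up,\up} + \id)$. The induced surjection $\symq(\up) \twoheadrightarrow U'(\wl)$ is then an isomorphism since both sides have the same Hilbert series. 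The main obstacle is precisely this final reconciliation: the Levendorskii--Soibelman relations are a priori convex-order relations and only implicitly module-theoretic, whereas $\symq(\up)$ is defined via the coboundary structure; rather than compute directly, one leverages $\uql$-equivariance to pin the two candidate subspaces of $\up \otimes \up$ together by a dimension count.
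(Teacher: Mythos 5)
Your plan for part (a) is essentially the paper's (the paper itself only cites \cite{Zwi09}*{Theorem~5.6} and sketches (a) in \cref{rem:on-zwicknagls-main-theorem-5-6}), but parts (b) and (c) contain genuine gaps. The decisive one is (b): your argument uses only the weights of the generators. The generators \(X_k\) of the \emph{untwisted} cell \(U(\wl)\) also have weights exhausting \(\Delta(\up)\) (by \cref{lem:Phi-w-recipe} and \cref{lem:parabolic-element-properties}(c), their weights are \(\Phi(\wl)=\Delta(\up)\)), yet \(U(\wl)\) is in general \emph{not} \(\ad(\uql)\)-invariant --- this is exactly why the twist \(U'(\wl)=T_{\wzl}U(\wl)\) is introduced (see \cref{rem:on-zwicknagls-main-theorem-5-6}). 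So no argument that sees only the weights of the generators can prove (b). Concretely, \(\ad(E_j)E_{\xi_k}=E_jE_{\xi_k}-q^{(\alpha_j,\xi_k)}E_{\xi_k}E_j\) is an element of \(\uqg\) whose weight space is infinite-dimensional; \cref{lem:cominuscule-properties}(b) is about weight spaces of the \emph{module} \(\up\), not of \(\uqg\), and cannot force this element into \(\spn\{E_{\xi_i}\}\). What is actually needed is the finer commutation structure attached to the convex order in which the \(\fl\)-roots precede the radical roots (i.e.\ to the specific twist), which is the real content of Zwicknagl's theorem.

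Two further soft spots. In (a), the Levendorskii--Soibelman/De Concini--Procesi relations do \emph{not} automatically produce quadratic right-hand sides: they give linear combinations of monomials \(E_{\xi_{i_1}}\cdots E_{\xi_{i_m}}\) of a priori arbitrary length with \(k<i_1\le\dots\le i_m<l\) and total weight \(\xi_k+\xi_l\). Quadraticity (\(m=2\)) follows only from the cominuscule fact that every radical root contains \(\alpha_t\) with coefficient one (\cref{lem:cominuscule-properties}(a)); this is precisely the step supplied in \cref{rem:on-zwicknagls-main-theorem-5-6} and omitted in your sketch. In (c), a dimension count cannot ``pin the two candidate subspaces together'': two \(\uql\)-submodules of \(\up\otimes\up\) of equal dimension need not coincide. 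You would need, for instance, that \(\up\otimes\up\) is multiplicity-free together with a comparison of characters, or of the signs of the braiding eigenvalues on its constituents as in \cref{prop:coboundary-struct-on-uqg-modules}(c), to conclude that the degree-two relation space of \(U'(\wl)\) equals \(\ker(\sigma_{\up,\up}+\id)\); and since this step also presupposes (b) to know that the relation space is a \(\uql\)-submodule, the gap in (b) propagates into (c).
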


\begin{rem}
  \label{rem:on-zwicknagls-main-theorem-5-6}
  Note that \(U(\wl)\) is in general not invariant under \(\uql\), which is the reason for introducing \(U'(\wl)\).  However, since \(U'(\wl)\) is obtained from \(U(\wl)\) by acting with the automorphism \(T_{\wzl}\), the two algebras are isomorphic.
  The fact that \(U'(\wl)\) is quadratic follows from the commutation relations \cite{ConPro93}*{Theorem~9.3} for the quantum root vectors:
  for any \(k < l\), the \(q\)-commutator \(E_{\xi_l} E_{\xi_k} - q^{-(\xi_k,\xi_l)}E_{\xi_k} E_{\xi_l}\) can be expressed as a linear combination of products \(E_{\xi_{j_1}} \dots E_{\xi_{j_m}}\) with \(k < j_1 \leq \dots \leq j_m < l\) and \(\sum_{i=1}^m \xi_{j_i} = \xi_k + \xi_l\).
  In the cominuscule situation all \(\xi_j\) contain \(\alpha_t\) with coefficient one, so we must have \(m=2\).
  Then the PBW theorem for \(\uqg\) shows that these are all of the relations between the generators for \(U'(\wl)\). 
\end{rem}

\section{The quantum exterior algebras are Frobenius algebras}
\label{sec:quantum-exterior-algebras-are-frobenius}

This section contains the 
technical proof that the quantum exterior algebra 
\(\extq(\um)\) is a Frobenius algebra.
We prove this by introducing a one-generated filtration of \(\symq(\up)\) by a graded ordered semigroup in the sense of \cite{PolPos05}*{Chapter~4, \S 7}.
Using the theory developed therein, we transfer this filtration to the quadratic dual algebra \(\extq(\um)\).
It is straightforward to prove that the associated graded algebra is Frobenius, and then we lift the result to \(\extq(\um)\) itself.

\subsection{A filtration of the quantum symmetric algebra}
\label{sec:filtratation-of-quantum-symmetric-algebra}

We begin by introducing a distinguished generating set for \(\symq(\up)\).

\begin{conv}
  \label{conv:generators-of-quantum-symmetric-algebra}
  We fix a weight basis \(\{ x_j \}_{j=1}^N\) for the \(\uql\)-module \(\up\) such that \(x_j\) corresponds to the quantum root vector \(E_{\xi_j}\) under the isomorphism \(\symq(\up) \cong U'(\wl)\) from \cref{prop:zwicknagl-main-theorem-5-6}(c).
  Thus from \eqref{eq:commutation-rels-for-quantum-root-vectors} we get the relations 
  \begin{equation}
    \label{eq:relations-for-quantum-symmetric-algebra}
    x_l x_k - q^{-(\xi_k,\xi_l)}x_k x_l = \sum_{k < i \leq j < l} c^{ij}_{kl}x_i x_j, \quad k < l
  \end{equation}
  among the generators of the quantum symmetric algebra.
  For \(\bk = (k_1, \dots, k_N) \in \zp^N\), we define the monomial
  \begin{equation}
    \label{eq:def-of-PBW-monomials}
    x_\bk \eqdef x_1^{k_1} \dots x_N^{k_N}.
  \end{equation}
  Then according to \cref{prop:sq-facts-from-zwicknagl}(b), the set \(\{ x_\bk \}\) is a PBW basis for \(\symq(\up)\).
\end{conv}

\begin{dfn}
  \label{dfn:filtration-of-quantum-symmetric-algebra}
  Let \(\Gamma = \zp^N\), and denote by \(\delta_j\) the element with a \(1\) in the \(j^{\text{th}}\) position and zeros elsewhere.
  Define a semigroup homomorphism \(g : \Gamma \rightarrow \zp\) by
  \[
  g(k_1,\dots, k_N) \eqdef \sum_j k_j.
  \]
  For \(l \in \zp\) denote \(\Gamma_l = g^{-1}(l)\).
  We give each \(\Gamma_l\) the lexicographic order, i.e.\ we say that \((k_1,\dots,k_N) < (k_1',\dots,k_N')\) if there is an index \(j\) such that \(k_i = k_i'\) for \(i < j\) and \(k_j < k_j'\).
  (Note that \(\delta_1 > \dots > \delta_N\) in this ordering.)
  For \(\bk \in \Gamma_l\), we define a subspace \(\cF_\bk = \cF_\bk \symq(\up) \subseteq \symq^l(\up)\) by
  \begin{equation}
    \label{eq:filtration-of-quantum-symmetric-algebra}
    \cF_\bk = \cF_\bk \symq(\up) \eqdef \spn \{ x_{\bk'} \mid \bk' \in \Gamma_l \text{ and } \bk' \leq \bk\} \subseteq \symq^l(\up).
  \end{equation}
\end{dfn}

\begin{lem}
  \label{lem:filtration-is-one-generated}
  The set \(\{ \cF_{\bk} \mid \bk \in \Gamma \}\) is a \emph{\(\Gamma\)-valued filtration} of \(\symq(\up)\), i.e.\ the following hold:
  \begin{enumerate}[(a)]
  \item \(\cF_{\bj} \subseteq \cF_{\bk}\) when \(\bj \leq \bk \in \Gamma_l\).
  \item \(\cF_{(l,0,\dots,0)} = S^l_q(\up)\) (note that \((l,0,\dots,0)\) is the maximal element in \(\Gamma_l\)).
  \item \(\cF_{\bk} \cF_{\bk'} \subseteq \cF_{\bk + \bk'}\).
  \end{enumerate}
  Moreover, the filtration is \emph{one-generated}, i.e.\ for every \(\bk \in \Gamma_l\) we have
  \begin{equation}
    \label{eq:one-generation-of-filtration}
    \cF_{\bk} = \sum_{\delta_{i_1} + \dots + \delta_{i_l} \leq \bk}\cF_{\delta_{i_1}} \dots \cF_{\delta_{i_l}}.
  \end{equation}
\end{lem}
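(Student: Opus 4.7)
The plan is to verify (a) and (b) directly, handle (c) with a rewriting argument based on the commutation relations \eqref{eq:relations-for-quantum-symmetric-algebra}, and then derive (d) as a formal consequence. Item (a) is immediate, since enlarging the indexing set of a span increases the subspace. Item (b) follows from the PBW basis theorem \cref{prop:sq-facts-from-zwicknagl}(b): $\symq^l(\up)$ is spanned by $\{x_\bk : \bk \in \Gamma_l\}$, and $(l,0,\dots,0)$ is the maximum of $\Gamma_l$ in lex order, since any other element of $\Gamma_l$ has first coordinate strictly less than $l$.

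For the multiplicativity (c), given any word $w = x_{a_1}\cdots x_{a_m}$ in the generators, let $\mathbf{m}(w) \in \Gamma_m$ denote its \emph{multi-exponent}, whose $i$-th coordinate counts the occurrences of $x_i$ in $w$. The key observation is that applying \eqref{eq:relations-for-quantum-symmetric-algebra} to an adjacent pair $x_l x_k$ with $k < l$ yields only words of multi-exponent $\leq \mathbf{m}(w)$: the main term $q^{-(\xi_k,\xi_l)} x_k x_l$ preserves $\mathbf{m}(w)$, while each correction term replaces the local contribution $\delta_k + \delta_l$ by $\delta_i + \delta_j$ for some $k < i \leq j < l$. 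Since $k$ is strictly smaller than $i$, $j$, and $l$, the first nonzero coordinate of $(\delta_i + \delta_j) - (\delta_k + \delta_l)$ occurs at position $k$ and equals $-1$, giving a strict lex decrease. Coupled with the observation that the main term strictly decreases the tuple $(a_1, \dots, a_m)$ in lex order at fixed multi-exponent, this furnishes a well-founded combined statistic, so rewriting terminates in a PBW expansion whose multi-exponents are all $\leq \mathbf{m}(w)$. Applied to $w = x_\bj x_{\bj'}$ with $\bj \leq \bk$ and $\bj' \leq \bk'$, this yields a PBW expansion supported on multi-exponents $\leq \bj + \bj' \leq \bk + \bk'$, the last inequality following from monotonicity of lex order under addition. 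Hence $x_\bj x_{\bj'} \in \cF_{\bk + \bk'}$.

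Item (d) then follows formally. The inclusion $\supseteq$ is immediate from (a) and (c): each summand $\cF_{\delta_{i_1}} \cdots \cF_{\delta_{i_l}}$ with $\delta_{i_1} + \dots + \delta_{i_l} \leq \bk$ lies in $\cF_{\delta_{i_1} + \dots + \delta_{i_l}} \subseteq \cF_\bk$. For $\subseteq$, each spanning vector $x_{\bk'}$ of $\cF_\bk$ (with $\bk' \leq \bk$) factors as a product $x_{i_1} \cdots x_{i_l}$ in which $1$ appears $k_1'$ times, $2$ appears $k_2'$ times, and so on; since $x_i \in \cF_{\delta_i}$ and $\sum_m \delta_{i_m} = \bk' \leq \bk$, this places $x_{\bk'}$ on the right-hand side. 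The main obstacle is the multi-exponent monotonicity step in (c); once one observes that lex order on $\Gamma_l$ is aligned with the $k < i \leq j < l$ structure of the correction terms in \eqref{eq:relations-for-quantum-symmetric-algebra}, everything else is bookkeeping.
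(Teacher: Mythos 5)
Your proof is correct and follows essentially the same route as the paper's (which is only a sketch): (a) and (b) from the definitions and the PBW basis, (c) from the commutation relations \eqref{eq:relations-for-quantum-symmetric-algebra}, and one-generatedness from the fact that the \(x_j\) generate \(\symq(\up)\). The details you supply — the strict lex decrease of the multi-exponent under the correction terms, the termination of the rewriting, and the compatibility of the lexicographic order with addition — are exactly the bookkeeping the paper leaves implicit, and they check out.
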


\begin{proof}
  Parts (a) and (b) follow immediately from the definitions.
  Part (c) follows from the commutation relations \eqref{eq:relations-for-quantum-symmetric-algebra}.
  Finally, the filtration is one-generated because the \(x_j\) generate \(\symq(\up)\).
\end{proof}

The associated \(\Gamma\)-graded algebra \(\grf \symq(\up)\) is also \(\zp\)-graded via the homomorphism \(g : \Gamma \to \zp\).
Its relations are particularly simple:

\begin{prop}
  \label{prop:associated-graded-of-quantum-symmetric-algebra}
  The associated \(\Gamma\)-graded algebra \(\grf \symq(\up)\) is generated by the elements \(\{ \ox_j \}_{j=1}^N\) subject to the defining relations
  \begin{equation}
    \label{eq:commutation-rels-in-associated-graded}
    \ox_l \ox_k - q^{-(\xi_k,\xi_l)}\ox_k \ox_l = 0, \quad k < l,    
  \end{equation}
  where \(\ox_k\) is the image of \(x_k\) in \(\cF_{\delta_k} / \cF_{\delta_{k+1}}\).
\end{prop}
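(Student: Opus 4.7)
The plan is to establish the proposition in three steps: generation, verification of the relations, and a dimension count showing the stated relations are all of them.

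For generation, one-generation of the filtration (\cref{lem:filtration-is-one-generated}) immediately implies that $\grf \symq(\up)$ is generated in total degree one, that is, by the classes $\{\ox_j\}_{j=1}^N$.

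To verify the relations, fix $k < l$ and consider the commutation identity \eqref{eq:relations-for-quantum-symmetric-algebra} in $\symq(\up)$:
\[
  x_l x_k - q^{-(\xi_k,\xi_l)} x_k x_l = \sum_{k < i \leq j < l} c^{ij}_{kl} x_i x_j.
\]
Each monomial $x_i x_j$ on the right is a PBW basis element with multi-index $\delta_i + \delta_j$ (or $2\delta_i$ if $i=j$). Since $i > k$, this multi-index has $0$ in coordinate $k$, whereas $\delta_k + \delta_l$ has $1$ there, so $\delta_i + \delta_j <_{\mathrm{lex}} \delta_k + \delta_l$ in $\Gamma_2$ and hence $x_i x_j \in \cF_{(\delta_k + \delta_l)^-}$. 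Reducing the identity modulo $\cF_{(\delta_k + \delta_l)^-}$ yields $\ox_l \ox_k = q^{-(\xi_k,\xi_l)} \ox_k \ox_l$ in the component $\cF_{\delta_k+\delta_l}/\cF_{(\delta_k+\delta_l)^-}$ of $\grf \symq(\up)$. (The lex order is compatible with componentwise addition, so the multiplication on the associated graded is well defined.)

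For the reverse direction, let $A$ be the abstract $\bbC$-algebra with generators $\tilde x_1,\dots,\tilde x_N$ and defining relations $\tilde x_l \tilde x_k = q^{-(\xi_k,\xi_l)} \tilde x_k \tilde x_l$ for $k < l$. Viewing $A$ as an iterated graded Ore extension $\bbC[\tilde x_N][\tilde x_{N-1};\tau_{N-1}]\cdots[\tilde x_1;\tau_1]$ (equivalently, applying the diamond lemma to the obviously confluent rewriting system), the ordered monomials $\tilde x_1^{k_1}\cdots\tilde x_N^{k_N}$ form a basis of $A$, so $\dim A^l = \dim S^l(\up)$. By the previous step, the assignment $\tilde x_j \mapsto \ox_j$ extends to a surjective graded homomorphism $A \twoheadrightarrow \grf \symq(\up)$. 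Because $\Gamma_l$ is a finite totally ordered set with maximum $(l,0,\dots,0)$ and $\cF_{(l,0,\dots,0)} = \symq^l(\up)$, the dimensions telescope to give $\dim \grf \symq(\up)^l = \dim \symq^l(\up) = \dim S^l(\up)$, the last equality by \cref{prop:sq-facts-from-zwicknagl}(a). The surjection is thus an isomorphism in each total degree, which completes the proof.

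The substantive content is really in the second step; the third step is a routine Hilbert series comparison, and the PBW basis for the quantum affine space $A$ is standard. The key combinatorial point is that every correction term in the quantum Schubert cell commutation relation has its $k^{\text{th}}$ multi-index coordinate equal to zero, making it strictly smaller than $\delta_k+\delta_l$ in lex order and hence invisible in the associated graded.
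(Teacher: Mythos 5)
Your proposal is correct and follows essentially the same route as the paper: generation from one-generatedness, the observation that the correction terms $x_ix_j$ with $k<i\le j<l$ lie strictly lower in the lexicographic filtration (so the relations \eqref{eq:commutation-rels-in-associated-graded} hold in $\grf\symq(\up)$), and then the PBW/Hilbert-series fact from \cref{prop:sq-facts-from-zwicknagl} to rule out further relations. Your third step, comparing dimensions with the abstract quantum affine space, is just a fleshed-out version of the paper's terse appeal to the PBW basis $\{x_\bk\}$ of $\symq(\up)$, so no further comment is needed.
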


\begin{proof}
  The elements \(\{ \ox_k \}\) generate the associated graded algebra because the filtration is one-generated.
  In the relations \eqref{eq:relations-for-quantum-symmetric-algebra}, the terms on the right-hand side have lower filtration degree than those on the left-hand side, and thus vanish in the associated graded algebra.
  The relations \eqref{eq:commutation-rels-in-associated-graded} follow.
  These are the only relations in \(\grf \symq(\up)\) because the \(x_\bk\) for \(\bk \in \Gamma\) form a PBW basis of \(\symq(\up)\).
\end{proof}

\subsection{The dual filtration of the quantum exterior algebra}
\label{sec:dual-filtration-of-quantum-exterior-algebra}

As \(\extq(\um)\) is the quadratic dual of \(\symq(\up)\), it comes with a \emph{dual filtration:}

\begin{dfn}
  \label{dfn:filtration-of-quantum-exterior-algebra}
  Let \(\gc \eqdef \zp^N\) denote the same semigroup as \(\Gamma\), but with the opposite ordering on each fiber \(\gc_l \eqdef g^{-1}(l)\), so that \(\delta_1 < \dots < \delta_N\).
  Let \(\{ y_j \}_{j=1}^N\) be the basis for \(\um\) dual to \(\{ x_j \}_{j=1}^N\), i.e.
  \begin{equation}
    \langle y_i,x_j \rangle = \delta_{ij},
  \end{equation}
  where \(\langle \cdot,\cdot \rangle\) is the pairing from \cref{sec:quantization-of-nilradical}.
  We give \(\extq(\um) = \symq(\up)^!\) the one-generated \(\gc\)-valued filtration \(\Fc\) determined by
  \begin{equation}
    \label{eq:def-of-filtration-of-quantum-exterior-algebra}
    \Fc_{\delta_k} \eqdef \spn \{ y_j \mid j \leq k\}, \quad 1 \leq k \leq N.
  \end{equation}
  Then for arbitrary \(\bk \in \gc_l\) the subspace \(\Fc_{\bk} = \Fc_{\bk} \extq(\um) \subseteq \extq^l(\um)\) is defined by the analogous formula to \eqref{eq:one-generation-of-filtration}, keeping in mind that we use the ordering of \(\gc\).
\end{dfn}

\begin{prop}
  \label{prop:associated-graded-of-quantum-exterior-algebra}
  The associated \(\gc\)-graded algebra \(\grfc \extq(\um)\) is generated by the elements \(\{ \oy_j \}_{j=1}^N\), subject to the defining relations
  \begin{equation}
    \label{eq:commutation-rels-in-associated-graded-exterior-algebra}
    \oy_l \wedge \oy_k +  q^{-(\xi_k,\xi_l)} \oy_k \wedge \oy_l = 0, \quad k \leq l,
  \end{equation}
  where \(\oy_k\) is the image of \(y_k\) in \(\Fc_{\delta_k} / \Fc_{\delta_{k-1}}\).
\end{prop}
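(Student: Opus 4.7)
The plan is to invoke the machinery of dual filtrations on quadratic algebras from \cite{PolPos05}*{Chapter~4, §7}. The key input there is that for a one-generated \(\Gamma\)-valued filtration \(\cF\) on a quadratic algebra \(A\), there is a canonical one-generated \(\gc\)-valued dual filtration on the Koszul dual \(A^!\) (with \(\gc\) being \(\Gamma\) with the opposite order on each fibre), together with a natural isomorphism of quadratic \(\zp\)-graded algebras \((\grf A)^! \cong \grfc (A^!)\). I would apply this to \(A = \symq(\up)\) equipped with the filtration \(\cF\) of \cref{dfn:filtration-of-quantum-symmetric-algebra}; the induced dual filtration on \(A^! = \extq(\um)\) coincides with \(\Fc\) of \cref{dfn:filtration-of-quantum-exterior-algebra} by construction, so combined with \cref{prop:associated-graded-of-quantum-symmetric-algebra} the proof reduces to computing the quadratic dual of the skew polynomial algebra appearing there.

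To carry out that computation, let \(R \subseteq \up \otimes \up\) denote the span of the quadratic relations \(x_l \otimes x_k - q^{-(\xi_k,\xi_l)} x_k \otimes x_l\) for \(k < l\), and identify \(\um \otimes \um\) with \((\up \otimes \up)^\ast\) via the pairing \eqref{eq:dual-pairing-of-tensor-squares}. Writing a general element as \(\sum a_{ij}\, y_i \otimes y_j\), orthogonality to each generator of \(R\) reduces to the conditions \(a_{kl} = q^{-(\xi_k,\xi_l)} a_{lk}\) for \(k < l\), yielding a basis of \(R^\perp\) consisting of the vectors \(y_l \otimes y_k + q^{-(\xi_k,\xi_l)} y_k \otimes y_l\) for \(k < l\) together with the diagonal elements \(y_k \otimes y_k\). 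These pass to relations in the quotient \(\grfc \extq(\um)\) which for \(k < l\) are exactly \eqref{eq:commutation-rels-in-associated-graded-exterior-algebra}, and for \(k = l\) give \((1 + q^{-(\xi_k,\xi_k)}) \oy_k \wedge \oy_k = 0\), hence \(\oy_k \wedge \oy_k = 0\) since \(q > 1\) forces the scalar to be nonzero; this is precisely the \(k = l\) case of the stated relations. A dimension count (\(\dim R^\perp = N(N+1)/2\), matching the dimension of the ordinary symmetric square) confirms that no further relations appear.

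The main obstacle I anticipate is not the linear algebra but the bookkeeping surrounding the two competing conventions for Koszul duality. As noted in \cref{rem:on-def-of-koszul-dual}, the paper's convention differs from that of \cite{PolPos05} by passage to the opposite algebra, and this must be tracked carefully against the order reversal from \(\Gamma\) to \(\gc\) to confirm that the relations in \(\grfc \extq(\um)\) emerge in the asserted multiplicative order (with \(\oy_l \wedge \oy_k\) written on the left and the index constraint \(k \leq l\) rather than \(k \geq l\)). Once these conventions are aligned, the remainder of the argument is a routine verification.
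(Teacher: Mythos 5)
Your proposal follows essentially the same route as the paper: it likewise invokes the dual-filtration machinery of \cite{PolPos05}*{Chapter~4, \S 7} (Corollary~7.3) to identify \(\grfc \extq(\um)\) with \(\bigl(\grf \symq(\up)\bigr)^!\) and then dualizes the skew-polynomial relations under the pairing \eqref{eq:dual-pairing-of-tensor-squares}, a computation the paper calls ``straightforward'' and which you carry out correctly (including the dimension count and the harmless scalar \(1+q^{-(\xi_k,\xi_k)}\) in the \(k=l\) case). The only point to state explicitly is that the duality isomorphism you quote is not unconditional: it requires \(\grf \symq(\up)\) to be PBW, hence Koszul, which is immediate from \cref{prop:associated-graded-of-quantum-symmetric-algebra} and is precisely the hypothesis the paper verifies before applying Corollary~7.3.
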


\begin{proof}
  The elements \(\{ \oy_k \}\) generate the associated graded algebra because the filtration \(\Fc\) is one-generated.
  From \cref{prop:associated-graded-of-quantum-symmetric-algebra} we can see that \(\grf \symq(\up)\) is PBW, and hence Koszul.
  Then according to Corollary~7.3 in Chapter~4 of \cite{PolPos05}, \(\grfc \extq(\um)\) is Koszul, and we have
  \[
  \grfc \extq(\um) = \grfc \bigl( \symq(\up)^! \bigr) \cong \bigl( \grf \symq(\up) \bigr)^!.
  \]
  It is straightforward to show that the relations dual to \eqref{eq:commutation-rels-in-associated-graded} are exactly \eqref{eq:commutation-rels-in-associated-graded-exterior-algebra}.
\end{proof}

\begin{rem}
  \label{rem:heckenberger-kolb-already-proved-this}
  \cref{prop:associated-graded-of-quantum-exterior-algebra} was proved by different methods in \cite{HecKol06}, Proposition~3.7.
\end{rem}

\begin{dfn}
  \label{dfn:basis-of-quantum-exterior-algebra}
  For any subset \(J \subseteq \{ 1, \dots, N \}\) with \(\abs{J} = l\), we define elements \(y_J \in \extq^l(\um)\) and \(x_J \in \extq(\up)\) by
  \begin{equation}
    \label{eq:basis-of-quantum-exterior-algebra}
    y_{J} \eqdef y_{j_1} \wedge \dots \wedge y_{j_l}, \quad x_{J} \eqdef x_{j_1} \wedge \dots \wedge x_{j_l}
  \end{equation}
  where \(J = \{ j_1, \dots, j_l \}\) and \(j_1 < \dots < j_l\).
  We denote \([N] \eqdef \{ 1,\dots,N \}\).
\end{dfn}

\begin{cor}
  \label{cor:basis-for-quantum-exterior-power}
  The elements \(y_J\) with \(\abs{J} = l\) form a basis for \(\extq^l(\um)\).
\end{cor}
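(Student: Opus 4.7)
The plan is to combine \cref{prop:associated-graded-of-quantum-exterior-algebra}, which describes the associated graded algebra $\grfc \extq(\um)$ in very explicit terms, with the Hilbert-series computation from \cref{cor:koszulity-of-sq}, and conclude by a dimension count. The $y_J$ are naturally indexed by subsets $J \subseteq [N]$ of size $l$, and there are $\binom{N}{l}$ of these, matching $\dim \ext^l(\um) = \dim \extq^l(\um)$, so it will suffice to show that the $y_J$ span $\extq^l(\um)$.

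The key preliminary observation is that in $\grfc \extq(\um)$ the generators square to zero. Indeed, setting $k = l$ in the defining relations \eqref{eq:commutation-rels-in-associated-graded-exterior-algebra} gives $(1 + q^{-(\xi_k,\xi_k)})\, \oy_k \wedge \oy_k = 0$; since $q > 1$ and $\xi_k$ is a root (so $(\xi_k,\xi_k) > 0$), the scalar prefactor is strictly positive, forcing $\oy_k \wedge \oy_k = 0$. Combined with the $q$-anticommutation $\oy_l \wedge \oy_k = -q^{-(\xi_k,\xi_l)} \oy_k \wedge \oy_l$ for $k < l$, any monomial $\oy_{i_1} \wedge \dots \wedge \oy_{i_l}$ is, up to a nonzero scalar, either zero (when two indices coincide) or an ordered monomial $\oy_J = \oy_{j_1} \wedge \dots \wedge \oy_{j_l}$ with $j_1 < \dots < j_l$. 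Hence $\{\oy_J : |J| = l\}$ spans $\grfc^l \extq(\um)$.

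Next I would lift the spanning statement from the associated graded to the filtered algebra. Since the filtration $\Fc$ is one-generated and its top piece in degree $l$ is all of $\extq^l(\um)$, the standard filtered-to-graded argument shows that the elements $y_J \in \extq^l(\um)$ (which map to the $\oy_J$ in the appropriate subquotients of $\Fc$) span $\extq^l(\um)$. Comparing with $\dim \extq^l(\um) = \binom{N}{l}$ from \cref{cor:koszulity-of-sq} then shows that this spanning set, having exactly $\binom{N}{l}$ elements, is a basis.

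There is no serious obstacle; the one point requiring a moment of care is the verification that $\oy_k \wedge \oy_k = 0$, which exploits $q > 1$ and the positivity of $(\xi_k,\xi_k)$. Everything else is bookkeeping: the filtration machinery of \cref{sec:filtratation-of-quantum-symmetric-algebra}--\cref{sec:dual-filtration-of-quantum-exterior-algebra} reduces the claim to counting, and the Koszul-duality input supplies the exact dimension.
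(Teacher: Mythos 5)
Your proposal is correct and follows essentially the same route as the paper's proof: deduce spanning of \(\extq^l(\um)\) by the ordered monomials \(y_J\) from the relations \eqref{eq:commutation-rels-in-associated-graded-exterior-algebra} in the associated graded algebra, then use the dimension count \(\dim \extq^l(\um) = \binom{N}{l}\) from \cref{cor:koszulity-of-sq} to conclude linear independence. You simply spell out the details (the vanishing of \(\oy_k \wedge \oy_k\) and the filtered-to-graded lifting) that the paper leaves implicit.
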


\begin{proof}
  It follows from \eqref{eq:commutation-rels-in-associated-graded-exterior-algebra} that the elements \(y_J\) with \(\abs{J} = l\) span \(\extq^l(\um)\).
  \cref{cor:koszulity-of-sq} implies that the dimension of \(\extq^l(\um)\) is \(\binom{N}{l}\), so these \(y_J\) are linearly independent, and hence form a basis.
\end{proof}

\subsection{The Frobenius property}
\label{sec:frobenius-property-of-quantum-exterior-algebra}

\cref{prop:associated-graded-of-quantum-exterior-algebra} implies that \(\grfc \extq(\um)\) is a Frobenius algebra with Frobenius functional given by projection onto \(\oy_{[N]}\).

\newcommand{\lmax}{l_{\mathrm{max}}}

\begin{lem}
  \label{lem:frobenius-lifts}
  If \(A\) is a finite-dimensional algebra with a \(\Gamma\)-valued filtration \(\cF\) such that \(\grf A\) is a Frobenius algebra, then \(A\) is also a Frobenius algebra.
\end{lem}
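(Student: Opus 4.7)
The plan is to lift a Frobenius functional from $\grf A$ to $A$ and show the resulting associative bilinear form remains nondegenerate. Since $\grf A$ is a graded Frobenius algebra (graded by $\Gamma$, hence by $\zp$ via $g$), its socle is concentrated in the top degree, so one may choose the Frobenius functional $\bar\phi \colon \grf A \to \bbC$ to be supported on a single top component $(\grf A)_{\bk_{\max}}$ for a maximal $\bk_{\max} \in \Gamma$. In the application this is transparent: $\bk_{\max} = \delta_1 + \cdots + \delta_N$ and $\bar\phi$ is the coordinate functional dual to $\oy_{[N]}$.

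Next I lift $\bar\phi$ to $\phi \colon A \to \bbC$ by choosing any linear extension of the composition $\cF_{\bk_{\max}} \twoheadrightarrow (\grf A)_{\bk_{\max}} \xrightarrow{\bar\phi} \bbC$, for instance by extending by zero on a vector space complement of $\cF_{\bk_{\max}}$ in $A$. Setting $\beta(a,b) \eqdef \phi(ab)$ produces an associative bilinear form on $A$ automatically.

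The key step is nondegeneracy. Given $0 \neq a \in A$, let $\bk \in \Gamma$ be the minimum element (in the total order on $\Gamma$ that first compares $g$-values and then uses the in-fiber order) such that $a \in \cF_\bk$. The image $\bar a \in (\grf A)_\bk$ is then nonzero. By the Frobenius property of $\grf A$ there exists $\bar b \in (\grf A)_{\bk_{\max}-\bk}$ with $\bar\phi(\bar a\,\bar b) \neq 0$; lift $\bar b$ to $b \in \cF_{\bk_{\max}-\bk}$. The multiplicativity $\cF_\bk \cdot \cF_{\bk_{\max}-\bk} \subseteq \cF_{\bk_{\max}}$ together with the fact that the image of $ab$ in $(\grf A)_{\bk_{\max}}$ equals $\bar a\,\bar b$ (by definition of the associated graded multiplication) yields $\phi(ab) = \bar\phi(\bar a\,\bar b) \neq 0$, so $\beta$ is nondegenerate in its first argument; a symmetric argument handles the second.

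The main obstacle is the first step: verifying that a Frobenius functional on the graded algebra $\grf A$ can be taken supported on a single top element. This follows from the general principle that the socle of a graded Frobenius algebra is concentrated in the top degree, and it is manifest in our setting from the explicit basis $\{\oy_J\}$ of $\grfc \extq(\um)$, for which $\bar\phi(\oy_J) = \delta_{J,[N]}$ works directly.
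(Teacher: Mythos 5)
Your overall strategy---lift a Frobenius functional supported in top degree and prove nondegeneracy by a leading-term computation---is sound, but your first step has a genuine gap relative to the lemma as stated. The hypothesis is only that \(\grf A\) is Frobenius \emph{as an algebra}; you upgrade this to the existence of a Frobenius functional supported on a single top component \((\grf A)_{\bk_{\max}}\) by appealing to ``the general principle that the socle of a graded Frobenius algebra is concentrated in the top degree.'' That principle is false without a connectedness (graded-local) assumption such as \((\grf A)_0 = \bbC\): the graded algebra \(\bbC[x]/(x^2) \times \bbC\), with \(x\) in degree one and the second factor in degree zero, is Frobenius, yet its socle contains the degree-zero idempotent \((0,1)\), and no functional supported on a single homogeneous component is Frobenius (a degree-one functional annihilates the ideal \(0 \times \bbC\), while a degree-zero functional annihilates the ideal generated by \((x,0)\)). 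So your argument proves the lemma only under this extra hypothesis on \(\grf A\); the statement in its full generality is precisely what the paper delegates to Bongale's Theorem~2 (for \(\Gamma = \zp\)), whose proof does not pass through a homogeneous functional. For the application in the paper the extra hypothesis does hold---\(\grfc \extq(\um)\) is connected with one-dimensional top component \(\bbC\, \oy_{[N]}\)---so what you prove does suffice for the use that is made of the lemma, as you yourself observe at the end.

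Two smaller repairs are needed in the nondegeneracy step. In the paper's setting the subspaces \(\cF_\bk\) for \(\bk \in \Gamma_l\) filter only the degree-\(l\) component of \(A\), so a non-homogeneous \(a\) lies in no \(\cF_\bk\); either reduce to homogeneous \(a\) (using that your \(\phi\) vanishes outside the top \(\zp\)-degree, so the remaining homogeneous components of \(a\) contribute nothing to \(\phi(ab)\) once \(b\) is homogeneous), or work with the cumulative filtration \(\bigl(\bigoplus_{l' < g(\bk)} A_{l'}\bigr) \oplus \cF_\bk\), which is again multiplicative and has the same associated graded. Also, \(\bk_{\max} = \delta_1 + \dots + \delta_N\) is not the maximal element of its fiber \(\gc_N\); what your argument actually uses, and what holds here, is that it is the unique degree in the top \(\zp\)-degree with \((\grfc \extq(\um))_{\bk} \neq 0\), so that \(\Fc_{\bk_{\max}}\) is the whole top graded component and the complementary degrees \(\bk_{\max} - \bk\) exist in \(\Gamma\). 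With these adjustments your computation \(\phi(ab) = \bar\phi(\bar a\, \bar b) \neq 0\) is correct. For comparison, the paper gives no argument at all: it cites Bongale's theorem for \(\Gamma = \zp\) and asserts that the generalization to an arbitrary graded ordered semigroup is routine, so your write-up is more explicit than the paper's, but narrower than the lemma it is meant to establish.
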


\begin{proof}
  See Theorem~2 of \cite{Bon67}, where the case \(\Gamma = \zp\) is treated.
  The proof for an arbitrary graded ordered semigroup is a straightforward generalization.
\end{proof}

\begin{prop}
  \label{prop:frobenius-for-quantum-exterior-algebras}
  \begin{enumerate}[(a)]
  \item The quantum exterior algebras \(\extq(\up)\) and \(\extq(\um)\) are Frobenius algebras.
  \item Frobenius functionals for \(\extq(\up)\) and \(\extq(\um)\) are given by projection onto \(x_{[N]}\) and projection onto \(y_{[N]}\), respectively.
  \end{enumerate}
\end{prop}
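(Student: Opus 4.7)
My plan is to deduce the Frobenius property of \(\extq(\um)\) from that of its associated graded algebra \(\grfc\extq(\um)\) via \cref{lem:frobenius-lifts}, to identify the Frobenius functional by tracking the top-degree component of the filtration, and to handle \(\extq(\up)\) by a parallel construction on the \(\um\)-side.

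First I would verify that \(\grfc\extq(\um)\) is itself a Frobenius algebra with functional given by projection onto \(\oy_{[N]}\). From the relations \eqref{eq:commutation-rels-in-associated-graded-exterior-algebra} of \cref{prop:associated-graded-of-quantum-exterior-algebra}, setting \(k = l\) yields \(\oy_k \wedge \oy_k = 0\) (since \(1 + q^{-(\xi_k,\xi_k)} \neq 0\)). Combined with the \(q\)-skew commutation relations and the Hilbert-series match from \cref{cor:koszulity-of-sq}, this shows that the monomials \(\{\oy_J\}_{J \subseteq [N]}\) form a basis, the top piece is spanned by \(\oy_{[N]}\), and for each \(J\) the wedge \(\oy_J \wedge \oy_{[N] \setminus J}\) is a non-zero scalar multiple of \(\oy_{[N]}\). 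Hence the pairing \((a,b) \mapsto \text{coefficient of } \oy_{[N]} \text{ in } a \wedge b\) is non-degenerate.

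Next, \cref{lem:frobenius-lifts} gives the Frobenius property of \(\extq(\um)\). To identify the functional as projection onto \(y_{[N]}\) I would argue directly: by \cref{cor:basis-for-quantum-exterior-power} the top piece \(\extq^N(\um)\) is spanned by \(y_{[N]}\), so the projection \(\phi_A\) onto \(y_{[N]}\) makes sense. Given non-zero \(y \in \extq^l(\um)\), take its symbol \(\gr y \neq 0\) in \(\grfc\extq(\um)\); by the Frobenius property of the associated graded, there exists \(\overline{y'}\) in the complementary filtration degree with \(\gr y \wedge \overline{y'} = c\,\oy_{[N]}\), \(c \neq 0\). Lift \(\overline{y'}\) to \(y' \in \extq^{N-l}(\um)\). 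Since \(y \wedge y'\) lies in the one-dimensional space \(\bbC y_{[N]}\), we can write \(y \wedge y' = \alpha\,y_{[N]}\); comparing symbols at the top filtration degree \((1,\dots,1)\) forces \(\alpha = c \neq 0\), so \(\phi_A(y \wedge y') \neq 0\). This shows that \(\phi_A\) is a non-degenerate Frobenius functional on \(\extq(\um)\).

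For \(\extq(\up)\), the argument is entirely symmetric: \(\symq(\um)\) is also a twisted quantum Schubert cell (built from quantum root vectors \(F_{\xi_j}\) in place of \(E_{\xi_j}\)), carries a PBW basis, and satisfies the analogous commutation relations. Hence the constructions of \cref{sec:filtratation-of-quantum-symmetric-algebra} and \cref{sec:dual-filtration-of-quantum-exterior-algebra} transfer verbatim, giving a filtration on \(\extq(\up) = \symq(\um)^!\) whose associated graded is Frobenius with functional picking out \(\ox_{[N]}\); the same lifting then identifies the Frobenius functional on \(\extq(\up)\) as projection onto \(x_{[N]}\). The main technical point I anticipate is justifying the equality between the \(y_{[N]}\)-coefficient of \(y \wedge y'\) and the \(\oy_{[N]}\)-coefficient of \(\gr y \wedge \overline{y'}\); this rests on the fact that in the top \(\zp\)-degree \(N\) the filtration \(\Fc\) has a unique jump at \(\bk = (1,\dots,1)\), which itself follows from \(\oy_k \wedge \oy_k = 0\) and the one-dimensionality of \(\extq^N(\um)\).
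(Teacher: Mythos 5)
Your proposal follows the paper's own route: show that \(\grfc\extq(\um)\) is Frobenius with functional the projection onto \(\oy_{[N]}\) using the relations \eqref{eq:commutation-rels-in-associated-graded-exterior-algebra}, lift to \(\extq(\um)\) via \cref{lem:frobenius-lifts}, and handle \(\extq(\up)\) by the mirror-image construction. The only difference is that you make explicit, via the symbol/unique-jump argument, why the Frobenius functional on \(\extq(\um)\) itself may be taken to be projection onto \(y_{[N]}\) — a point the paper leaves implicit in its appeal to the lifting lemma — and this added detail is correct.
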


\begin{proof}
  From the relations \eqref{eq:commutation-rels-in-associated-graded-exterior-algebra} we see that projection onto \(\oy_{[N]}\) is a Frobenius functional for \(\grfc \extq(\um)\).
  Then \cref{lem:frobenius-lifts} implies that \(\extq(\um)\) itself is a Frobenius algebra.
  The analogous arguments apply to \(\extq(\up)\).
\end{proof}

\begin{rem}
  \label{rem:frobenius-algebra-but-not-in-uql-mod}
  Although the quantum exterior algebra \(\extq(\um)\) is both a Frobenius algebra and a \(\uql\)-module algebra, it is not a Frobenius algebra in the category of \(\uql\)-modules.
  The reason is that the Frobenius functional is not equivariant for the action of \(K_{\omega_t}\).  
  Indeed, \(K_{\omega_t}\) acts as the scalar \(q^{-N d_t}\) on \(\bbC y_{[N]}\), but as the identity on \(\bbC\).
  The analogous remark applies to \(\extq(\up)\) as well.
\end{rem}

\section{Quantum Clifford algebras and the 
Dolbeault--Dirac operator}
\label{sec:clifford-algs-and-spinors}

We now introduce the quantum Clifford algebra 
\(\clifq\) via its spinor representation.
In \cref{sec:classical-clifford-alg} we recall the 
realization of the Clifford algebra of a hyperbolic quadratic 
space \(V \oplus V^\ast\) by creation and annihilation operators.
This amounts to factorizing \(\End_\bbC(\ext(V))\) as a product 
of two subalgebras isomorphic to \(\ext (V)\) and
\(\ext (V^\ast)\). 
In \cref{sec:the-quantum-clifford-algebra} we generalize this factorization to the quantum setting.
Finally, we use 
\(\clifq\) to define an algebraic analogue 
\(D \in \uqg \otimes 
\clifq\) of the Dolbeault--Dirac operator. The key point
is that \(\uqg \otimes \clifq\) is an
extension of \(\symq(\up)^\mathrm{op} \otimes
\extq(\um)\) to a
\(\ast\)-algebra, and \(D\) is obtained as the sum of the Koszul
boundary map of \(\symq(\up)\) plus its formal adjoint. 

\subsection{The classical Clifford algebra}
\label{sec:classical-clifford-alg}

Let \(V\) be a finite-dimensional complex vector space with dual space \(V^\ast\).
Then \(V \oplus V^\ast\) carries the canonical symmetric bilinear form, which gives rise to the Clifford algebra \(\clif(V \oplus V^\ast)\).
The exterior algebra \(\ext(V)\) can be used as a model for the space \(\cS\) of spinors.
More precisely, as \(V\) and \(V^\ast\) are isotropic, the exterior algebras \(\ext(V)\) and \(\ext(V^\ast)\) embed as subalgebras into the Clifford algebra, and the multiplication map of the Clifford algebra is an isomorphism of vector spaces
\begin{equation}
  \label{eq:multiplication-isomorphism}
  \ext(V^\ast) \otimes \ext(V) \overset{\cong}{\longrightarrow} \clif(V \oplus V^\ast).
\end{equation}
The representation of \(\clif(V \oplus V^\ast)\) on \(\cS = \ext(V)\) restricts to the regular representation of \(\ext(V)\) and to the dual of the regular representation of \(\ext(V^\ast)\), respectively.
In this way, we obtain a factorization
\begin{equation}
\gamma : \ext(V^\ast) \otimes \ext(V) \overset{\cong}{\longrightarrow} \End_\bbC(\cS)\label{eq:factorization-of-clifford-algebra}
\end{equation}
of the endomorphism algebra of \(\cS\) into the product of two subalgebras.
For further details, see \cite{Bas74,Che54}.

\subsection{The quantum Clifford algebra}
\label{sec:the-quantum-clifford-algebra}

Taking \(V = \up\) and \(V^\ast = \um\), we now construct an analogue of the isomorphism \eqref{eq:factorization-of-clifford-algebra}, replacing \(\ext(\upm)\) with \(\extq(\upm)\), respectively.
In fact, we will carry out this construction \(\uql\)-equivariantly, so we 
must keep track of left and right duals.

Let \(\gamma_+\) denote the left regular representation of \(\extq(\up)\) on itself, so that
\[
\gamma_+(x)z = x \wedge z
\]
for \(x,z \in \extq(\up)\).
The resulting algebra map \(\gamma_+ : \extq(\up) \to \End_\bbC(\extq(\up))\) is \(\uql\)-equivariant because \(\extq(\up)\) is a \(\uql\)-module algebra.
The operators \(\gamma_+(x)\) are the \emph{quantum creation operators}.

Now we define the \emph{quantum annihilation operators} as follows.
In \cref{prop:dual-of-exterior-algebra} we identified \(\extq(\um) \cong \extq(\up)^\ast\), and hence we have \(\extq(\up) \cong {^\ast}\extq(\um)\) as left \(\uql\)-modules.
Thus \(\extq(\up)\) is a left \(\extq(\um)\)-module, identified with the dual of the right regular representation.
We denote this action of \(\extq(\um)\) on \(\extq(\up)\) by \(\gamma_-\).
It is given explicitly by
\[
\langle w, \gamma_-(y) x \rangle = \langle w \wedge y, x \rangle
\]  
for \(x \in \extq(\up)\) and \(w,y \in \extq(\um)\).
The algebra map \(\gamma_- : \extq(\um) \to \End_\bbC(\extq(\up))\) is \(\uql\)-equivariant because the pairing is invariant and because \(\extq(\um)\) is a \(\uql\)-module algebra.

With the actions \(\gamma_{\pm}\) of \(\extq(\upm)\) on \(\extq(\up)\) as above, we obtain a \(\uql\)-equivariant map
\begin{equation}
  \label{eq:gamma-q-def}
  \gamma : \extq(\um) \otimes \extq(\up) \to \End_\bbC(\extq(\up)), \quad y \otimes x \mapsto \gamma_-(y) \gamma_+(x).
\end{equation}
This map is a quantization of \eqref{eq:factorization-of-clifford-algebra}.
That it is an isomorphism of \(\uql\)-modules follows from the fact that the two factors are Frobenius algebras:

\begin{thm}
  \label{thm:spinors-are-irreducible}
  The map \(\gamma\) is an isomorphism.
\end{thm}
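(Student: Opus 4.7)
The plan is to exploit the equality of dimensions and then establish injectivity by a leading-term computation. By \cref{cor:koszulity-of-sq} (combined with \cref{prop:dual-of-exterior-algebra}), both $\extq(\up)$ and $\extq(\um)$ have dimension $2^N$, so
\[
\dim\bigl(\extq(\um) \otimes \extq(\up)\bigr) = 4^N = \dim\End_\bbC(\extq(\up)),
\]
and it therefore suffices to show that $\gamma$ is injective. Injectivity will follow from a leading-term argument that uses the filtration $\Fc$ on $\extq(\um)$ from \cref{sec:quantum-exterior-algebras-are-frobenius} and its analogue on $\extq(\up)$ (obtained by applying the same construction to $\symq(\um)$ in place of $\symq(\up)$).

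First I would establish the key action formula. With the bases $\{x_J\}_{J \subseteq [N]}$ and $\{y_K\}_{K \subseteq [N]}$ from \cref{dfn:basis-of-quantum-exterior-algebra}, one has
\[
\gamma(y_K \otimes x_J)(x_M) = \gamma_-(y_K)(x_J \wedge x_M).
\]
Combining the relations \eqref{eq:commutation-rels-in-associated-graded-exterior-algebra} from \cref{prop:associated-graded-of-quantum-exterior-algebra} (applied to both quantum exterior algebras) with the Frobenius property of \cref{prop:frobenius-for-quantum-exterior-algebras}, one verifies that modulo lower-order filtration terms this expression equals a nonzero scalar multiple of $x_{(J \cup M) \setminus K}$ when $J \cap M = \emptyset$ and $K \subseteq J \cup M$, and vanishes otherwise. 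That is, to leading order the operators $\gamma(y_K \otimes x_J)$ reproduce the classical creation-then-annihilation pattern on the exterior algebra.

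Next I would convert this into linear independence of $\{\gamma(y_K \otimes x_J)\}_{J,K \subseteq [N]}$. Attach to each pair $(J, K)$ its \emph{signature} $(M_0, L_0) \eqdef (K \setminus J,\, J \setminus K)$; pairs with a given signature are parametrized by $A \eqdef J \cap K \subseteq [N] \setminus (M_0 \cup L_0)$. The leading-term formula shows that $\gamma(y_K \otimes x_J)$ has a nonzero leading entry at matrix position $(M_0 \cup B,\, L_0 \cup B)$ iff $B \subseteq [N] \setminus (M_0 \cup L_0 \cup A)$, and the signature of any operator contributing at position $(M, L)$ is forced to equal $(M \setminus L,\, L \setminus M)$. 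Thus operators with distinct signatures occupy disjoint matrix positions. Given a relation $\sum c_{J,K}\,\gamma(y_K \otimes x_J) = 0$, fix a signature $(M_0, L_0)$; evaluating the relation at $B = [N] \setminus (M_0 \cup L_0 \cup A_0)$ isolates contributions from precisely the pairs with $A \subseteq A_0$, so induction on $|A_0|$ drives every $c_{J,K}$ with signature $(M_0, L_0)$ to zero. Running this across all signatures finishes the argument.

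The main technical obstacle is the first step: verifying the leading-term formula for $\gamma(y_K \otimes x_J)(x_M)$ by carefully tracking \eqref{eq:commutation-rels-in-associated-graded-exterior-algebra} through the expansion of $x_J \wedge x_M$ in $\extq(\up)$ and through its dual incarnation inside $\gamma_-$. Once that formula is established, the combinatorial induction on signatures and on $|A_0|$ is elementary.
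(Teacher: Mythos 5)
Your reduction to injectivity via the dimension count is exactly the paper's first move, but the way you establish injectivity has two genuine gaps. The first is the leading-term formula itself, which you defer but which is really the heart of the matter: to know that, modulo lower filtration terms, \(\gamma_-(y_K)\) sends \(x_{J\cup M}\) to a \emph{nonzero} multiple of \(x_{(J\cup M)\setminus K}\) (and to lower-order terms when \(K\not\subseteq J\cup M\)), you need the dual pairing \eqref{eq:dual-pairing-of-exterior-algebras} between \(\extq^n(\um)\) and \(\extq^n(\up)\) to be triangular with nonvanishing diagonal with respect to the bases \(\{y_K\},\{x_J\}\) and the chosen orderings. Nothing in the paper establishes this (recall also that the identification of \(\um^{\otimes n}\) with \((\up^{\otimes n})^*\) uses the order-reversing convention \eqref{eq:dual-pairing-of-tensor-squares}, and the lifts \((\pi^n_\pm)^{-1}\) of wedge monomials to antisymmetrized tensors are not explicit), and it is not a routine consequence of \eqref{eq:commutation-rels-in-associated-graded-exterior-algebra}; as written, the core computation is asserted rather than proved.

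Second, even granting that formula, the signature induction is not sound as stated. The claims that ``operators with distinct signatures occupy disjoint matrix positions'' and that evaluating at \(B=[N]\setminus(M_0\cup L_0\cup A_0)\) ``isolates contributions from precisely the pairs with \(A\subseteq A_0\)'' are true only for the \emph{leading} entries: the lower-order corrections of \(\gamma(y_{K'}\otimes x_{J'})\) for pairs of a different signature, or with \(A'\not\subseteq A_0\), can perfectly well produce a nonzero coefficient of \(x_{L_0\cup B}\) in \(\gamma(y_{K'}\otimes x_{J'})(x_{M_0\cup B})\), so the relation does not decouple by signature and your induction on \(\abs{A_0}\) does not close. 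To repair this you would need a single ordering of matrix positions compatible with \emph{all} correction terms (or a genuine passage to an associated graded of \(\End_\bbC(\extq(\up))\)), which is not set up. Note that the paper sidesteps all of this: it evaluates \(\gamma(c)\) on the Frobenius ``dual basis'' \(\{z_J\}\) supplied by \cref{prop:frobenius-for-quantum-exterior-algebras}, so that only the top component \(x_{[N]}\) matters, and then uses the Frobenius property of \(\extq(\um)\) together with the nondegeneracy of the top-degree pairing from \cref{prop:dual-of-exterior-algebra} to conclude \(c_J=0\) by induction on \(\abs{J}\); no triangularity of the pairing or bookkeeping of lower-order matrix entries is ever needed. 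I would suggest either adopting that route or, if you want to keep the filtration approach, proving the triangularity statement and reorganizing the independence argument as an induction over a global order on positions.
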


\begin{proof}
  We show that \(\gamma\) is injective, and hence for dimension reasons is an isomorphism.
  Indeed, assume that
  \[
  c = \sum_{I \subseteq [N]} c_I \otimes x_I
  \]
  lies in the kernel of \(\gamma\) for some elements \(c_{I} \in \extq(\um)\).
  Here \(\{ x_I \}\) is the basis of \(\extq(\up)\) introduced in \cref{dfn:basis-of-quantum-exterior-algebra}.
   By \cref{prop:frobenius-for-quantum-exterior-algebras}(b), there is a unique ``dual basis'' \(\{ z_J \}_{J \subseteq [N]}\) for \(\extq(\up)\), with \(\deg z_J = N - \abs{J}\), which satisfies
   \begin{equation}
     \label{eq:dual-basis-properties}
     x_I \wedge z_J =
     \begin{cases}
       0 & \text{ if } \abs{I} > \abs{J}\\
       \delta_{IJ} x_{[N]} & \text{ if } \abs{I} = \abs{J}
     \end{cases},
   \end{equation}
   while if \(\abs{I} < \abs{J}\) then \(x_I \wedge z_J\) lies in \(\extq^{< N}(\up)\).
   Applying \(\gamma(c) = \sum_{I} \gamma_-(c_I) \gamma_+(x_I)\) to \(z_J\) and using \eqref{eq:dual-basis-properties}, we obtain
   \begin{equation}
     \label{eq:proof-of-gamma-isomorphism}
     0 = \sum_{I \subseteq [N]} \gamma_-(c_I) x_I \wedge z_J = \gamma_-(c_J) x_{[N]} + \sum_{\abs{I} < \abs{J}} \gamma_-(c_I) x_I \wedge z_J.
   \end{equation}

   Now we claim that if \(\gamma_-(y)x_{[N]} = 0\) for some \(y \in \extq(\um)\), then \(y = 0\).
   Indeed, if \(\gamma_-(y)x_{[N]} = 0\) then for any \(w \in \extq(\um)\) we have
   \[
   0 = \langle w, \gamma_-(y) x_{[N]} \rangle \eqdef \langle w \wedge y, x_{[N]} \rangle.
   \]
   However, as \(\extq(\um)\) is a graded Frobenius algebra, and in light of \cref{prop:frobenius-for-quantum-exterior-algebras}(b), if \(y \neq 0\) then there is an element \(w \in \extq(\um)\) such that \(w \wedge y = y_{[N]}\), so we get \(\langle y_{[N]}, x_{[N]} \rangle = 0\).
   This contradicts \cref{prop:dual-of-exterior-algebra}; hence we must have \(y = 0\).
   
   Applying this claim together with induction on \(\abs{J}\) to \eqref{eq:proof-of-gamma-isomorphism}, we conclude that \(c_J = 0\) for all \(J\), and hence \(c = 0\).
   Thus \(\gamma\) is injective.
\end{proof}

\begin{dfn}
  \label{dfn:clifford-alg-def}
  We define the \emph{quantum Clifford algebra} to be 
\(\clifq \eqdef \End_\bbC(\extq(\up))\) together with the factorization \(\gamma : \extq(\um) \otimes \extq(\up) \to \clifq\) from 
\cref{thm:spinors-are-irreducible}. 
\end{dfn}

\begin{rem}\label{rem:on-the-cross-relations}
Several authors have considered quantum Clifford algebras previously, e.g.\ \cite{BauEtal96,BrzPapRem93,Fio98,Fio99,Han00,Hec03}.
In these approaches the Clifford algebras were defined explicitly by generators and relations, in contrast to our \cref{dfn:clifford-alg-def}.
Classically, the Clifford algebra of \(\up \oplus \um\) is defined abstractly as a quotient of the tensor algebra.
The isomorphism \eqref{eq:factorization-of-clifford-algebra} is then easily established just using the cross-relations between the creation and annihilation operators,
\[ 
\gamma_+(x) \gamma_-(y) + 
\gamma_-(y) \gamma_+(x) = \langle
x,y \rangle,\quad y \in \um, x \in \up.  
\]
While the existence of such a presentation of \(\clifq\) 
can be deduced from the factorization \(\gamma\),
the cross-relations are in general more complicated and 
involve terms of higher order.
\end{rem}

\subsection{The Dolbeault operator \(\eth\)}
\label{sec:koszul-complex-of-symq-uplus}

We will now embed the Koszul differential for \(\symq(\up)\) into \(\uqg \otimes \clifq\).
Recall that this is the canonical element in \(\up \otimes \um\), viewed as an element in \(\symq(\up)^{\op} \otimes \extq(\um)\):

\begin{dfn}
  \label{dfn:dolbeault-operator}
  We define
  \[
  \eth \eqdef \sum_{i = 1}^N  x_i \otimes y_i \in \symq(\up)^{\op} \otimes \extq(\um),
  \]
  where \(\{ x_i \}\) and \(\{ y_i \}\) are the bases for \(\up\) and \(\um\) defined in \cref{sec:quantum-exterior-algebras-are-frobenius}.
\end{dfn}

As \(\{ x_i \}\) and \(\{ y_i \}\) are dual bases, the element \(\eth\) is independent of their choice.
The following result is well known:
\begin{prop}
  \label{prop:properties-of-delbar}
  With respect to the ordinary tensor product algebra structure on \(\symq(\up)^{\op} \otimes \extq(\um)\), we have \(\eth^2 = 0\).
\end{prop}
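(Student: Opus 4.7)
My plan is to interpret $\eth^2$, taken modulo the defining relations of $\symq(\up)$ and $\extq(\um)$, as the image of the identity operator $\id_{\up \otimes \up}$; the quadratic duality $\extq(\um) \cong \symq(\up)^!$ of \eqref{eq:quadratic-duality-between-symm-and-ext-algs} then forces it to vanish. This is the standard Koszul differential identity applied to the pair $(\symq(\up), \extq(\um))$, so the argument ought to be purely formal once the conventions are set up.

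The first step is to expand $\eth^2$: the opposite multiplication in the first tensor factor gives
\[
\eth^2 = \sum_{i,j} x_j x_i \otimes y_i \wedge y_j,
\]
which is the image under the obvious quotient map of $\Psi \eqdef \sum_{i,j}(x_j \otimes x_i) \otimes (y_i \otimes y_j) \in \up^{\otimes 2} \otimes \um^{\otimes 2}$. A relabeling of summation indices rewrites $\Psi = \sum_{i,j}(x_i \otimes x_j) \otimes (y_j \otimes y_i)$; under the pairing \eqref{eq:dual-pairing-of-tensor-squares}, the reversed element $y_j \otimes y_i$ is precisely the functional dual to $x_i \otimes x_j$, so $\Psi$ corresponds to $\id_{\up \otimes \up}$ under the canonical isomorphism $\End_\bbC(\up^{\otimes 2}) \cong \up^{\otimes 2} \otimes (\up^{\otimes 2})^*$. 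The use of the opposite algebra in the first factor is precisely what is needed to convert the naive sum $\sum x_i x_j \otimes y_i \wedge y_j$, which corresponds to the flip rather than the identity, into $\id_{\up \otimes \up}$.

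The last step is to decompose $\up^{\otimes 2} = R \oplus S$, with $R \eqdef \ker(\sigma_{\up,\up} + \id)$ the space of relations of $\symq(\up)$ and $S \eqdef \ker(\sigma_{\up,\up} - \id) = \extq^2 \up$. By \eqref{eq:quadratic-duality-between-symm-and-ext-algs}, $R^\perp \subseteq \um^{\otimes 2}$ is the space of relations of $\extq(\um)$. A dual-basis argument, splitting $\id_{\up \otimes \up}$ along this decomposition, then gives
\[
\id_{\up \otimes \up} \in (R \otimes S^\perp) + (S \otimes R^\perp),
\]
and both summands are killed by the projection onto $\symq^2(\up)^{\op} \otimes \extq^2(\um)$; hence $\eth^2 = 0$. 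The only genuine obstacle is the bookkeeping around the opposite multiplication and the order-reversal built into the pairing convention \eqref{eq:dual-pairing-of-tensor-squares}; once these are reconciled, the statement reduces to the purely categorical Koszul-duality identity.
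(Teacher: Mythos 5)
Your argument is correct, but it is not the route the paper takes: the paper's proof of \cref{prop:properties-of-delbar} is essentially a citation of Polishchuk--Positselski (Chapter~2, Section~3), i.e.\ the general fact that the Koszul differential of a quadratic algebra squares to zero, combined with the remark (\cref{rem:on-def-of-koszul-dual}) that the opposite multiplication on the first tensor factor is exactly what matches their convention for the quadratic dual. What you wrote is, in effect, a self-contained proof of that cited fact in the case at hand: you identify \(\eth^2=\sum_{i,j}x_jx_i\otimes y_i\wedge y_j\) with the image, under the two quotient maps, of the element of \(\up^{\otimes2}\otimes\um^{\otimes2}\) corresponding to \(\id_{\up\otimes\up}\) (this is where the opposite product and the order-reversing pairing \eqref{eq:dual-pairing-of-tensor-squares} conspire, as you note), then split the identity as \(P_R+P_S\) with \(P_R\in R\otimes S^\perp\) and \(P_S\in S\otimes R^\perp\) along \(\up^{\otimes2}=R\oplus S\), and observe that \(R\) dies in \(\symq^2(\up)\) while \(R^\perp=\ker(\sigma_{\um,\um}-\id)\) is precisely the relation space of \(\extq(\um)\), using \cref{prop:coboundary-struct-on-uqg-modules}(a) and \eqref{eq:quadratic-duality-between-symm-and-ext-algs}. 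Your version buys an explicit verification that the paper's conventions (opposite algebra, reversed pairing) mesh correctly, a point the paper only asserts via the remark, whereas the citation is shorter and handles the whole Koszul complex at once; for \(\eth^2=0\) only the degree-\((2,2)\) component matters, which is exactly what you check. One small notational slip: in the paper's notation \(\ker(\sigma_{\up,\up}-\id)=\symq^2\up\), not \(\extq^2\up\) (it is \(R=\ker(\sigma_{\up,\up}+\id)\) that equals \(\extq^2\up\)); this does not affect your argument, which uses \(S\) only as the complementary eigenspace of the involution \(\sigma_{\up,\up}\).
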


\begin{proof}
  See, for example, Chapter~2, Section~3 of \cite{PolPos05}.
  Recall from \cref{rem:on-def-of-koszul-dual} that the quadratic dual algebra in our conventions is the opposite of the one defined there.
  That is, when we consider the Koszul complex providing a resolution of \(\bbC\) as a left \(\symq(\up)\)-module, then \(\eth\) acts as
  \[
  \eth (a \otimes f) = \sum_{i=1}^N a x_i \otimes y_i f, \quad a \otimes f \in \symq(\up) \otimes \extq(\um)^*. \qedhere
  \]
\end{proof}

Next we embed \(\eth\) into \(\uqg \otimes \clifq\).
Recall that
the quantum root vectors \(E_{\xi_i}\) are the 
generators of the twisted quantum Schubert cell 
\(U'(\wl)\) as in \cref{sec:twisted-quantum-schubert-cells}.
\begin{lem}\label{lem:embed}
  The assignment 
  \[  
  x_i \otimes y \mapsto 
  S^{-1}(E_{\xi_i}) \otimes \gamma_-(y),\quad
  y \in \extq(\um)
  \]
  extends to an algebra embedding \(\iota : \symq(\up)^{\op} \otimes \extq(\um) \rightarrow \uqg \otimes \clifq\). 
\end{lem}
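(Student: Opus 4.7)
The plan is to realise $\iota$ as the tensor product of two separate algebra homomorphisms, one on each tensor factor, whose images automatically commute in $\uqg \otimes \clifq$ because they sit in different legs of the tensor product. Concretely, I would produce an injective algebra map $\iota_1 : \symq(\up)^{\op} \to \uqg$ sending $x_i$ to $S^{-1}(E_{\xi_i})$, and an injective algebra map $\iota_2 : \extq(\um) \to \clifq$, and then set $\iota = \iota_1 \otimes \iota_2$.

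For the right-hand factor I take $\iota_2 = \gamma_-$, which is an algebra homomorphism by construction of the quantum Clifford algebra in \cref{sec:the-quantum-clifford-algebra}. Its injectivity is already contained in the argument given in the proof of \cref{thm:spinors-are-irreducible}: if $\gamma_-(y)$ acts as zero on $\extq(\up)$ then in particular $\gamma_-(y) x_{[N]} = 0$, and the Frobenius property from \cref{prop:frobenius-for-quantum-exterior-algebras} then forces $y = 0$.

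For the left-hand factor I combine two ingredients. \cref{prop:zwicknagl-main-theorem-5-6}(c), together with the identification of $x_i$ with $E_{\xi_i}$ fixed in \cref{conv:generators-of-quantum-symmetric-algebra}, provides an algebra isomorphism $\symq(\up) \cong U'(\wl) \hookrightarrow \uqg$. Composing with the algebra anti-automorphism $S^{-1}$ of $\uqg$ then yields an anti-homomorphism out of $\symq(\up)$, equivalently an injective algebra homomorphism $\iota_1 : \symq(\up)^{\op} \to \uqg$ with $\iota_1(x_i) = S^{-1}(E_{\xi_i})$. The tensor product $\iota = \iota_1 \otimes \gamma_-$ is then a well-defined algebra map, since its two component images land in the commuting subalgebras $\uqg \otimes 1$ and $1 \otimes \clifq$, and it is injective because a tensor product of injective $\bbC$-linear maps is injective.

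No genuinely new obstacle arises in this lemma: it merely compiles Zwicknagl's realisation of $\symq(\up)$ inside $\uqg$ together with the Frobenius result from the previous section. The only subtlety worth stressing is the twist by $S^{-1}$ on the first factor, which is precisely what converts the algebra homomorphism $\symq(\up) \to \uqg$ coming from \cref{prop:zwicknagl-main-theorem-5-6}(c) into a homomorphism out of the \emph{opposite} algebra $\symq(\up)^{\op}$, as required for compatibility with the Koszul differential $\eth$.
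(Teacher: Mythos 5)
Your proposal is correct and follows essentially the same route as the paper: identify $\symq(\up)$ with $U'(\wl)$ via $x_i \mapsto E_{\xi_i}$, compose with the anti-automorphism $S^{-1}$ to obtain a homomorphism out of $\symq(\up)^{\op}$, and tensor with the algebra map $\gamma_-$. The only difference is that you spell out injectivity (of $\gamma_-$ via the Frobenius argument from \cref{thm:spinors-are-irreducible}, and of the tensor product of injective maps), which the paper leaves implicit; this is a harmless and correct elaboration.
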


\begin{proof}
  By construction, the linear map \(x_i \mapsto E_{\xi_i}\) extends to an algebra isomorphism from \(\symq(\up)\) to \(U'(\wl) \subseteq \uqg\); see \cref{conv:generators-of-quantum-symmetric-algebra}.
  The inverse of the antipode is an anti-automorphism of \(\uqg\).
  Finally, \(\gamma_-\) is an algebra homomorphism by definition.
\end{proof}

By slight abuse of notation, we denote \(\iota(\eth)\) 
also by \(\eth\).

\subsection{\(*\)-structures and Dirac operators}
\label{sec:star-struct-and-dirac-operator}

Recall from \cref{sec:enveloping-alg} that \(\uqg\) has a 
\(\ast\)-structure called the compact real form. Choosing any
\(*\)-structure on \(\clifq\) leads to:
\begin{dfn}
  \label{dfn:dolbeault-dirac-operator}
  We define the \emph{Dolbeault--Dirac operator} 
  \[
  D \eqdef \eth + \eth^\ast \in \uqg \otimes \clifq.
  \]
\end{dfn}

The relation to the classical Dolbeault--Dirac operator from complex geometry 
will be explained in \cref{sec:motivation-and-outlook} below. 
With this definition, \cref{thm:main-theorem}
follows immediately from
\cref{prop:properties-of-delbar} and
\cref{lem:embed}. 

\begin{rem}
  \label{rem:on-the-star-structure}
  The element \(\eth^\ast\) and hence \(D\) depend on the choice of the 
  \(\ast\)-structure on \(\clifq\).
  \cref{thm:main-theorem} holds regardless of this
  choice, but the choice matters in the potential
  applications that we outline in
  \cref{sec:motivation}.  
  
  For these, the \(\ast\)-structure must arise from a
  \(\uql\)-invariant Hermitian inner product on
  \(\extq(\up)\).    
  On \(\up\) this is unique up
  to a positive scalar factor, as \(\up\) is irreducible.
  This extends canonically  to each tensor power
  \(\up^{\otimes k}\) by \[ \langle x_1 \otimes \dots
  \otimes x_k, z_1 \otimes \dots \otimes z_k \rangle =
  \langle x_1, z_1 \rangle \cdots \langle x_k, z_k
  \rangle, \] and then we can restrict this inner product
  to the submodule \(\extq^k \up\).  According to
  Proposition~3.2 of \cite{ChiTuc12}, the quotient map
  \(\up^{\otimes k} \to \extq^k(\up)\) restricts to an
  isomorphism \(\extq^k \up
  \overset{\sim}{\longrightarrow} \extq^k(\up)\) of
  \(\uql\)-modules; this gives us an invariant inner
  product on each \(\extq^k(\up)\) and hence on 
  \(\extq(\up)\). 

  Among the \(\uql\)-equivariant \(\ast\)-structures on
  \(\clifq\), the ones constructed in this manner are
  distinguished by the fact that 
  \(\gamma_+(x_i)^\ast = \gamma_-(y_i)\) as long as the
  \(x_i\) are orthonormal with respect to the chosen
  inner product on \(\up\).
\end{rem}

\section{An example: $\Gr(2,4)$}
\label{sec:grassmannian}

%

In this section we present the details of the preceding constructions
in the case when the Dynkin diagram with crossed node is 
$\, \DynkinDiagram{\bullet \Edge{r} & \times \Edge{r} & \bullet}$.
This means that $\fg = \fs\fl_4$ and $\fp$ consists of all matrices in $\fg$ of the form
$$
\begin{pmatrix}
*&*&*&*\\
*&*&*&*\\
0&0&*&*\\
0&0&*&*
\end{pmatrix}.
$$
The associated irreducible flag manifold is the
Grassmann manifold $\Gr(2,4)$ of all 2-dimensional
subspaces of $ \mathbb{C}^4$. This example is interesting 
for its role in Yang--Mills theory (see
e.g.~\cite{Ati79,BasEas89} and the references therein),
and since the quantum Dolbeault--Dirac operator (see
also the final section below) has not been constructed
explicitly in the literature yet. 

\subsection{Lie algebras and root data}
\label{sec:liealgs}

Let $\alpha_1,\alpha_2,\alpha_3$ be the simple roots of 
$\fg=\mathfrak{sl}_4$ (using the conventions
of \cite[\S 11.4]{Hum90}), and take $s =2$, so that 
$\cS = \{ \alpha_1, \alpha_3  \}$.
The Levi factor $\fl$ of $\fp$ is given by 
$$\fl \cong \fs(\fg\fl_2 \times \fg\fl_2) = \{ (X,Y) \in \fg\fl_2  \times \fg\fl_2 \mid  \tr X + \tr Y = 0 \},$$
so $\fk \cong \fs\fl_2 \times \fs\fl_2$,
and the nilradical $\up$ of $\fp$ is an abelian Lie algebra, embedded
as the upper-right $2 \times 2$ block of $\fp$, so
the decomposition $\fs\fl_4 \cong \um \oplus \fl \oplus \up$, in $2 \times 2$ block form,  is given by
$$
\fs\fl_4 = 
\begin{pmatrix}
  \fl & \up \\
  \um & \fl
\end{pmatrix},
$$
and the action of $\fl$ on $\up$ is given by
$$
(X,Y)\cdot Z = XZ - ZY,
$$
where $X,Y,Z$ are all $2 \times 2$ matrices, viewed as embedded in $\fs\fl_4$ as the upper left, lower right, and upper right blocks, respectively.

The Weyl group of $\fs\fl_4$ is $W \cong \mathfrak{S}_4$, the permutation
group on four letters, and we choose the reduced decomposition 
$$
w_0 = w_{0,\fl} w_\fl = (s_1 s_3) (s_2 s_3 s_1 s_2)
$$
of the longest word.
In the notation of \cref{sec:quantum-schubert-cell}, we have
$W_\fl = \langle s_1, s_3 \rangle \cong \bbZ_2 \times \bbZ_2$,
$w_{0,\fl} = s_1 s_3$, and the parabolic element $w_\fl$ is given by
$$
w_\fl = w_{0,\fl} w_0 = s_2 s_3 s_1 s_2.
$$

\subsection{The quantized enveloping algebra}
\label{sec:qalg}

The sets $\{ E_1,F_1,K_1 \}$ and $\{ E_3,F_3,K_3 \}$ generate commuting copies of $\uqsltwo$ inside $\uqslfour$, and we denote 
\begin{equation}
\label{uqkdecomp}
\uqk = \langle E_1,F_1,K_1,E_3,F_3,K_3 \rangle \simeq \uqsltwo \otimes \uqsltwo \subset \uqslfour.
\end{equation}

\subsection{The quantized representation $\up$}
\label{sec:qrep}

Let $V = \spn \{ v_1, v_2 \}$ be the two-dimensional irreducible representation 
of $\uqsltwo$, where $v_1$ is a highest weight vector.
The generators $E,F,K$ of $\uqsltwo$ act in $V$ via the matrices
\[
e =
\begin{pmatrix}
  0 & 1 \\ 0 & 0
\end{pmatrix},
\quad
f = 
\begin{pmatrix}
 0 & 0 \\ 1 & 0
\end{pmatrix}, 
\quad
k = 
\begin{pmatrix}
  q & 0 \\ 0 & q^{-1}
\end{pmatrix}.
\]

The representation $\up$ of $\uqk$ is given by 
\[
\up \cong V \otimes V
\]
with respect to the decomposition \eqref{uqkdecomp}.
This means that the first copy of $\uqsltwo$ acts in $\up$ via
\[
E_1 \mapsto e \otimes \id_V, \quad F_1 \mapsto f \otimes \id_V, \quad K_1 \mapsto k \otimes \id_V,
\]
while the second copy acts via
\[
E_3 \mapsto \id_V \otimes e, \quad F_3 \mapsto   \id_V \otimes f, \quad K_3 \mapsto  \id_V  \otimes k.
\]

\subsection{The coboundary structure}
\label{sec:coboundaryop}

The commutor $\sigma_{VV} : V \otimes V \to V \otimes V$ for the two-dimensional representation of $\uqsltwo$ is given by
\[
\sigma_{VV} =
\begin{pmatrix}
 1 & 0 & 0 & 0 \\
 0 & \frac{q^2-1}{1+q^2} & \frac{2 q}{1+q^2} & 0 \\
 0 & \frac{2 q}{1+q^2} & \frac{1-q^2}{1+q^2} & 0 \\
 0 & 0 & 0 & 1 \\
\end{pmatrix}
\]
with respect to the (lexicographically ordered) tensor product basis
\[
x_1 \eqdef v_1 \otimes v_1, \quad x_2 \eqdef v_1 \otimes v_2, \quad x_3 \eqdef v_2 \otimes v_1, \quad x_4 \eqdef v_2 \otimes v_2
\]
for $V \otimes V$.
The commutor for $\up = V \otimes V$ is then given by
\[
 \sigma_{\up\up} = \tau_{23} \circ (\sigma_{VV} \otimes \sigma_{VV}) \circ \tau_{23} : \up \otimes \up \to \up \otimes \up,
\]
where $\tau_{23}$ is the tensor flip in the second and third components when we make the identification $\up \otimes \up \cong V^{\otimes 4}$.

\subsection{The quantum symmetric and exterior algebras}
\label{sec:qsymextalgs}

Diagonalizing \(\sigma_{\up\up}\), we find that the space 
of  quantum antisymmetric 2-tensors
\(\extq^2 \up = \ker (\sigma_{\up\up} + \id)\), i.e., the space of relations for
 the quantum symmetric algebra,  is the span of
\[
\begin{gathered}
x_1\otimes x_2 - qx_2\otimes x_1, \quad
x_1\otimes  x_3 - qx_3\otimes x_1 ,\\
x_2\otimes  x_4 - qx_4\otimes x_2 ,\quad
x_3\otimes x_4 - qx_4\otimes x_3  ,\\
x_2\otimes x_3 - x_3\otimes x_2, \quad
x_1\otimes x_4 - x_4\otimes x_1 -(q-q^{-1}) x_2\otimes x_3.
\end{gathered}
\]
The corresponding relations are
\[
\begin{gathered}
x_1  x_2 = qx_2  x_1, \quad
x_1   x_3 = qx_3  x_1 ,\\
x_2   x_4 = qx_4  x_2 ,\quad
x_3  x_4 = qx_4  x_3  ,\\
x_2  x_3 = x_3  x_2, \quad
x_1  x_4 - x_4  x_1 =(q-q^{-1}) x_2  x_3.
\end{gathered}
\]
Hence, the quantum symmetric algebra \(\symq(\up)\) is the familiar 
algebra of \(2 \times 2\) quantum matrices.

The quantum exterior algebra \(\extq(\um)\) is the quadratic dual of \(\symq(\up)\)
(keeping in mind \cref{rem:on-def-of-koszul-dual}). Denoting the dual basis
to \(\{ x_1, x_2, x_3, x_4 \}\) by \(\{ y_1,y_2,y_3,y_4 \}\) and 
the multiplication in \(\extq(\um)\) by \(\wedge\), we find that
 the relations are
\[
\begin{gathered}
  y_1 \wedge y_1 =   y_2 \wedge y_2 =   y_3 \wedge y_3 =   y_4 \wedge y_4 = 0,\\
y_2 \wedge y_1 = -q^{-1} y_1 \wedge y_2, \quad
y_3 \wedge y_1 = -q^{-1} y_1 \wedge y_3,\\
y_4 \wedge y_2 = -q^{-1} y_2 \wedge y_4, \quad
y_4 \wedge y_3 = -q^{-1} y_3 \wedge y_4,\\
y_4 \wedge y_1 = - y_1 \wedge y_4, \quad
y_2 \wedge y_3 + y_3 \wedge y_2 = (q-q^{-1}) y_1 \wedge y_4.
\end{gathered}
\]

\subsection{The twisted quantum Schubert cell}
The radical roots are 
$$
	\xi_1=\alpha_1+\alpha_2+\alpha_3,\quad
	\xi_2=\alpha_1+\alpha_2,\quad
	\xi_3=\alpha_2+\alpha_3,\quad
	\xi_4=\alpha_2,
$$ 
and the corresponding quantum root vectors, defined
with respect to the expression for $w_0$ given above, 
that generate the 
twisted quantum Schubert cell are
\[
\begin{gathered}
	E_{\xi_1}=
	E_3(E_1E_2-q^{-1}E_2E_1)-
   q^{-1}(E_1E_2-q^{-1}E_2E_1)E_3,\quad
	E_{\xi_2}=
	E_1E_2-q^{-1}E_2E_1,\\
	E_{\xi_3}=
	E_3E_2
   -q^{-1} E_2E_3,\quad
	E_{\xi_4}=E_2.
\end{gathered}
\] 
\cref{prop:zwicknagl-main-theorem-5-6} can
now be verified directly 
 for this example
using the quantum Serre relations; that is, 
the elements $E_{\xi_1},E_{\xi_2},E_{\xi_3},E_{\xi_4}
\in U_q(\fg)$ indeed satisfy the
same relations as $x_1,x_2,x_3,x_4$ and transform in
the same way as them under
the adjoint action of $U_q(\fl)$.

Although the span of the \(E_{\xi_i}\) is invariant under the adjoint
action of \(\uql\), it is not in the locally finite part of \(\uqg\), 
i.e., its orbit under the  adjoint action of all of \(\uqg\) is
infinite-dimensional (see \S7.1.3 of \cite{Jos95} for more information).
If one multiplies the \(E_{\xi_i}\) on the right by \(K_{-2n\omega_2}\), 
the span of the resulting elements is in the locally finite part of
\(\uqg\). These elements coincide with the \(X^i\) defined 
in \S4 of \cite{Kra04}.

\section{A motivation}
\label{sec:motivation}

This final section aims at explaining our motivation for this paper, which was to further study the spectral triples on the quantized compact Hermitian symmetric spaces from \cite{Kra04}.
Before we discuss this application of the results, we explain the classical picture, that is, how the counterparts of our algebraic structures relate for \(q=1\) to the geometry of the compact Hermitian symmetric spaces.

\subsection{The classical geometric picture}
\label{sec:motivation-and-outlook}

First of all, view a pair of a complex semisimple Lie algebra \(\fg\) and a para\-bolic Lie subalgebra \(\fp\) as an infinitesimal description of the complex manifold \(G/P\), where \(G\) is the (connected, simply connected) Lie group corresponding to \(\fg\) and \(P\) is the parabolic subgroup having Lie
algebra \(\fp\).
These spaces are referred to as the \emph{generalized flag manifolds}, and (with respect to a Hermitian metric induced by the Killing form of \(\fg\)) they exhaust the compact homogeneous K\"ahler manifolds \cite{Wan54} as well as the coadjoint orbits of the compact semisimple Lie groups.
This leads to a wealth of applications in geometry, physics, and representation theory; see e.g.~\cite{BasEas89,ChrGin10}, \cite[Chapter~8]{Bes08}.   
The case in which \(\fp\) is of cominuscule type as in \cref{prop:irred-parabolic-conditions} corresponds to \(G/P\) being a \emph{symmetric space}; see e.g.~\cite{Kos61}*{Proposition~8.2}. 
This symmetric space is irreducible precisely when \(\fg\) is simple, so the pairs \((\fg,\fp)\) that we consider throughout the paper correspond to the irreducible compact Hermitian symmetric spaces.
A general compact Hermitian symmetric space is just a product of irreducible ones. 

As a real manifold, \(G/P\) is diffeomorphic to \(G_0/L_0\), where \(G_0\) is the compact real form of \(G\) and \(L_0 = L \cap G_0\) is the compact real form of \(L\) \cite{BasEas89}*{\S 6.4}. 
If \(Q\) is the parabolic subgroup of \(G\) with Lie algebra \(\fq = \fl \oplus \um\), then \(G/Q\) is also diffeomorphic to \(G_0/L_0\) and hence to \(G/P\).
However, the two induced complex structures on \(G_0/L_0\) are inverse to each other.

Our next aim is to describe the Dolbeault complex \((\Omega^{(0,\bullet)}, \delbar)\) of the complex manifold \(G/Q\). 
To this end, identify \(\fg/\fq\) with the holomorphic tangent space of \(G/Q\) at the identity coset.
This identifies the adjoint representation of \(\fq\) on \(\fg/\fq\) with the isotropy representation.
As representations of \(L_0\) we have \(\fg/\fq \cong \up\).
Hence the smooth sections of the holomorphic tangent bundle \(T^{(1,0)}\) can be identified with the \(L_0\)-equivariant smooth functions
\begin{equation}
  \psi \colon G_0 \rightarrow \up,\quad
  \psi(gh)=h^{-1} \psi(g) \quad \text{for all } g \in G_0, h \in L_0,\label{eq:sections-of-t10}
\end{equation}
as \(T^{(1,0)}\) is associated to the \(L_0\)-principal fiber bundle \(G_0 \to G_0 / L_0\) by the isotropy representation.
We fix an \(L_0\)-invariant Hermitian inner product \(\langle \cdot,\cdot \rangle\) on \(\up\), which induces an isomorphism of complex vector bundles \(T^{(1,0)} \cong \Omega^{(0,1)}\).
Hence from now on we view functions \(\psi\) as in \eqref{eq:sections-of-t10} as smooth \((0,1)\)-forms on \(G/Q\).
Similarly, \((0,n)\)-forms are identified with \(L_0\)-equivariant smooth functions from \(G_0\) to \(\ext^n(\up)\).
Finally, the \emph{Dolbeault operator}
\begin{equation}
  \label{eq:delbar-definition}
  \delbar : \Omega^{(0,n)} \rightarrow 
  \Omega^{(0,n+1)}
\end{equation}
acting on these functions is obtained by taking Cartan's differential \(\mathrm{d}\) of a \((0,n)\)-form and projecting onto \(\Omega^{(0,n+1)}\).

In order to construct the Hilbert space of square-integrable sections of the bundle \(\Omega^{(0,\bullet)}\), one embeds \(\ext^n(\up)\) into \(\up^{\otimes n}\) and extends the inner product on \(\up\) to one on \(\ext^n(\up)\), for each \(n\), as in \cref{rem:on-the-star-structure}.
Then one obtains an inner product on smooth sections of \(\Omega^{(0,\bullet)}\), defined for \(L_0\)-equivariant functions \(\phi,\psi : G_0 \to \ext(\up)\) by
\begin{equation}
  \label{eq:inner-product-on-forms}
  (\phi,\psi) \eqdef \int_{G_0} \langle \phi(g), \psi(g) \rangle dg,
\end{equation}
where we integrate with respect to the Haar measure of \(G_0\).
We denote by \(\cH\) the Hilbert space completion of \(\Omega^{(0,\bullet)}\) with respect to this inner product.

The universal enveloping algebra \(U(\fg)\) acts on the algebra \(C^\infty(G_0)\) of smooth complex-valued functions on \(G_0\) by extending the action of \(U(\fg_0)\) by differential operators.
If \(\cliff\) is the Clifford algebra of \(\up \oplus \um\) with respect to the canonical symmetric bilinear form, then \(\cliff\) acts naturally on \(\ext(\up)\) via the representation constructed in \cref{sec:classical-clifford-alg}.
Hence
\[
U(\fg) \otimes \cliff  
\]
acts on the algebra \(C^\infty(G_0) \otimes \ext(\up)\) of all smooth functions from \(G_0\) to \(\ext(\up)\).
Under this action, the classical analogue of our \(\eth\) is easily seen to leave the subalgebra of \(L_0\)-equivariant functions invariant, and we have
\begin{equation}
  \label{eq:delbar-eth-adjoint}
  (\delbar \phi, \psi) = (\phi, \eth \psi)
\end{equation}
for smooth sections \(\phi,\psi\) of \(\Omega^{(0,\bullet)}\).

The choice of compact real form of \(G\) induces a \(\ast\)-structure on \(U(\fg)\), and the Hermitian inner product on \(\ext(\up)\) induces a \(\ast\)-structure on \(\cliff\).
By \eqref{eq:delbar-eth-adjoint} the element
\[
D \eqdef \eth + \eth^\ast \in U(\fg) \otimes \clif
\]
acts, up to a scalar, as the Dolbeault--Dirac operator on \(G/Q\) formed with respect to the canonical spin\(^c\)-structure; see \cite{Fri00}*{\S 3.4} 
or \cite{BerGetVer04}*{\S 3.6}.
Analogously, \(D\) implements the Dolbeault--Dirac operator of \(G/P\) formed with respect to the anti-canonical spin\(^c\)-structure.

The point of the algebraic description of the 
Dolbeault--Dirac operator is that it leads
to a computation of its square and of its spectrum,
based on the celebrated Parthasarathy formula that
expresses \(D^2\) as a linear combination of Casimir
elements in \(U(\fg),U(\fl)\) and
constants; see \cite{Par72,Agr03,Kos99} and also \cite[Lemma~12.12]{Kna01}
for this algebraic approach to Dirac operators and the
Parthasarathy formula, \cite{CahFraGut89,CahGut88,Rie09,Sem93}
for the construction of spinors on symmetric spaces and
the application of Parthasarathy's formula in explicit
computations of spectra.
As we will explain next, 
the present article is meant as a step toward
a quantum analogue of these results.

\subsection{Spectral triples on quantized \(G/P\)}
\label{sec:quantum-motivation}

Recall that the matrix coefficients 
of the Type 1 representations of \(\uqg\) 
generate a Hopf \(\ast\)-algebra \(\bbC_q[G]\) that deforms the complex
coordinate ring of the real affine algebraic group \(G_0\).
The universal \(C^*\)-completion of \(\bbC_q[G]\)
is the fundamental example of 
a compact quantum group in the sense of Woronowicz \cite{Wor87}. 
Together with \(G_0\), one can quantize 
\(L_0\) in the form of a quotient Hopf \(\ast\)-algebra 
\(\bbC_q[L]\), and also \(G_0/L_0\) in the form of a right coideal subalgebra 
\(A\) of \(\bbC_q[G]\) \cite{Dij96,NouSug94,MulSch99}.
Associated vector bundles such as 
\(\Omega^{(0,\bullet)}\) can be quantized in the form of finitely generated
projective \(A\)-modules, which admit  
Hilbert space completions \(\cH\) using 
the Haar measure of the 
\(C^\ast\)-completion of \(\bbC_q[G]\). 
See e.g.~\cite{GovZha99,Kra04} and the references therein 
for these topics. 
The paradigmatic example of such a quantized symmetric space is the standard Podle\'s quantum sphere \cite{Pod87}. 

These structures all arise naturally from quantum group theory.
An obvious question to ask is whether there is also a Dirac-type operator \(D\) on a quantized spinor module that produces a spectral triple \((A,\cH,D)\) in the sense of Connes.
This would provide a quantization of the metric structure of \(G/P\).

D\k{a}browski and Sitarz constructed such a spectral triple over the Podle\'s sphere by deforming the Dirac operator with respect to the standard spin structure and Levi-Civita connection \cite{DabSit03}.
In \cite{Kra04}, an abstract argument was given that a quantization of the
Dolbeault--Dirac operator on all symmetric \(G/P\) exists.
It was shown that the commutators \([D,a]\) between algebra elements \(a \in A\) and the Dirac operator are given by bounded operators, which is the first axiom of a spectral triple.
However, the implicit nature of the construction meant that it was not possible to compute the spectrum of \(D\), nor even to prove that \(D\) had compact resolvent.
The latter is the second axiom for a spectral triple, and is a key condition for it to define a K-homology class for the \(C^*\)-algebra completion of 
the quantization \(A\) of \(G_0/L_0\).
Up to now, the only cases in which this has been carried out are the projective spaces; see \cite{DanDabLan08}.
The approach is by direct computation, and relies on the Hecke condition for the relevant braidings, so it seems difficult to generalize these methods to arbitrary \(G/P\).

The main motivation for us is that our new approach to the construction of \(\clifq\), and hence a quantization of the Dolbeault--Dirac operator in terms of the algebras of Berenstein and Zwicknagl, might lead to new techniques for its study and ultimately to a quantum version of the Parthasarathy formula.

Another natural problem that arises is whether one can construct
nonstandard quantized enveloping algebras and symmetric spaces also
for other deformations of polynomial rings, such as the Jordan plane or the Sklyanin algebras.


\begin{bibdiv}
  \begin{biblist}
\bib{Agr03}{article}{
  author={Agricola, Ilka},
  title={Connections on naturally reductive spaces, their Dirac operator and homogeneous models in string theory},
  journal={Comm. Math. Phys.},
  volume={232},
  date={2003},
  number={3},
  pages={535--563},
  issn={0010-3616},
  review={\MR {1952476 (2004c:53066)}},
}

\bib{Ati79}{book}{
   author={Atiyah, M. F.},
   title={Geometry on Yang-Mills fields},
   publisher={Scuola Normale Superiore Pisa, Pisa},
   date={1979},
   pages={99},
   review={\MR{554924 (81a:81047)}},
}
	
\bib{BasEas89}{book}{
  author={Baston, Robert J.},
  author={Eastwood, Michael G.},
  title={The Penrose transform},
  series={Oxford Mathematical Monographs},
  note={Its interaction with representation theory; Oxford Science Publications},
  publisher={The Clarendon Press Oxford University Press},
  place={New York},
  date={1989},
  pages={xvi+213},
  isbn={0-19-853565-1},
  review={\MR {1038279 (92j:32112)}},
}

\bib{Bas74}{article}{
  author={Bass, Hyman},
  title={Clifford algebras and spinor norms over a commutative ring},
  journal={Amer. J. Math.},
  volume={96},
  date={1974},
  pages={156--206},
  issn={0002-9327},
  review={\MR {0360645 (50 \#13092)}},
}

\bib{BauEtal96}{article}{
  author={Bautista, R.},
  author={Criscuolo, A.},
  author={{\Dbar }ur{\dbar }evi{\'c}, M.},
  author={Rosenbaum, M.},
  author={Vergara, J. D.},
  title={Quantum Clifford algebras from spinor representations},
  journal={J. Math. Phys.},
  volume={37},
  date={1996},
  number={11},
  pages={5747--5775},
  issn={0022-2488},
  review={\MR {1417173 (97k:81068)}},
  doi={10.1063/1.531744},
}

\bib{BerGetVer04}{book}{
  author={Berline, Nicole},
  author={Getzler, Ezra},
  author={Vergne, Mich{\`e}le},
  title={Heat kernels and Dirac operators},
  series={Grundlehren Text Editions},
  note={Corrected reprint of the 1992 original},
  publisher={Springer-Verlag},
  place={Berlin},
  date={2004},
  pages={x+363},
  isbn={3-540-20062-2},
  review={\MR {2273508 (2007m:58033)}},
}

\bib{BerZwi08}{article}{
  author={Berenstein, Arkady},
  author={Zwicknagl, Sebastian},
  title={Braided symmetric and exterior algebras},
  journal={Trans. Amer. Math. Soc.},
  volume={360},
  date={2008},
  number={7},
  pages={3429--3472},
  issn={0002-9947},
  review={\MR {2386232 (2009a:17018)}},
  doi={10.1090/S0002-9947-08-04373-0},
}

\bib{Bes08}{book}{
  author={Besse, Arthur L.},
  title={Einstein manifolds},
  series={Classics in Mathematics},
  note={Reprint of the 1987 edition},
  publisher={Springer-Verlag},
  place={Berlin},
  date={2008},
  pages={xii+516},
  isbn={978-3-540-74120-6},
  review={\MR {2371700 (2008k:53084)}},
}

\bib{Bon67}{article}{
  author={Bongale, P. R.},
  title={Filtered Frobenius algebras},
  journal={Math. Z.},
  volume={97},
  date={1967},
  pages={320--325},
  issn={0025-5874},
  review={\MR {0218395 (36 \#1482)}},
}

\bib{BrzPapRem93}{article}{
  author={Brzezi{\'n}ski, T.},
  author={Papaloucas, L. C.},
  author={Rembieli{\'n}ski, J.},
  title={Quantum Clifford algebras},
  conference={ title={}, address={Deinze}, date={1993}, },
  book={ series={Fund. Theories Phys.}, volume={55}, publisher={Kluwer Acad. Publ.}, place={Dordrecht}, },
  date={1993},
  pages={3--8},
  review={\MR {1266847 (95a:81115)}},
}

\bib{CahFraGut89}{article}{
  author={Cahen, M.},
  author={Franc, A.},
  author={Gutt, S.},
  title={Spectrum of the Dirac operator on complex projective space $P_{2q-1}({\bf C})$},
  journal={Lett. Math. Phys.},
  volume={18},
  date={1989},
  number={2},
  pages={165--176},
  issn={0377-9017},
  review={\MR {1010996 (90f:58172)}},
  doi={10.1007/BF00401871},
}

\bib{CahGut88}{article}{
  author={Cahen, M.},
  author={Gutt, S.},
  title={Spin structures on compact simply connected Riemannian symmetric spaces},
  booktitle={Proceedings of the Workshop on Clifford Algebra, Clifford Analysis and their Applications in Mathematical Physics (Ghent, 1988)},
  journal={Simon Stevin},
  volume={62},
  date={1988},
  number={3-4},
  pages={209--242},
  issn={0037-5454},
  review={\MR {976428 (90d:58161)}},
}

\bib{Che54}{book}{
  author={Chevalley, Claude C.},
  title={The algebraic theory of spinors},
  publisher={Columbia University Press},
  place={New York},
  date={1954},
  pages={viii+131},
  review={\MR {0060497 (15,678d)}},
}

\bib{ChiTuc12}{article}{
   author={Chirvasitu, Alexandru},
   author={Tucker-Simmons, Matthew},
   title={Remarks on quantum symmetric algebras},
   journal={J. Algebra},
   volume={397},
   date={2014},
   pages={589--608},
   issn={0021-8693},
   review={\MR{3119240}},
   doi={10.1016/j.jalgebra.2013.08.031},
}


\bib{ChrGin10}{book}{
  author={Chriss, Neil},
  author={Ginzburg, Victor},
  title={Representation theory and complex geometry},
  series={Modern Birkh\"auser Classics},
  note={Reprint of the 1997 edition},
  publisher={Birkh\"auser Boston Inc.},
  place={Boston, MA},
  date={2010},
  pages={x+495},
  isbn={978-0-8176-4937-1},
  review={\MR {2838836 (2012f:22022)}},
  doi={10.1007/978-0-8176-4938-8},
}

\bib{Con94}{book}{
  author={Connes, Alain},
  title={Noncommutative geometry},
  publisher={Academic Press Inc.},
  place={San Diego, CA},
  date={1994},
  pages={xiv+661},
  isbn={0-12-185860-X},
  review={\MR {1303779 (95j:46063)}},
}

\bib{ConMar08}{book}{
  author={Connes, Alain},
  author={Marcolli, Matilde},
  title={Noncommutative geometry, quantum fields and motives},
  series={American Mathematical Society Colloquium Publications},
  volume={55},
  publisher={American Mathematical Society},
  place={Providence, RI},
  date={2008},
  pages={xxii+785},
  isbn={978-0-8218-4210-2},
  review={\MR {2371808 (2009b:58015)}},
}

\bib{ConKacPro95}{article}{
  author={De Concini, C.},
  author={Kac, V. G.},
  author={Procesi, C.},
  title={Some quantum analogues of solvable Lie groups},
  conference={ title={Geometry and analysis}, address={Bombay}, date={1992}, },
  book={ publisher={Tata Inst. Fund. Res.}, place={Bombay}, },
  date={1995},
  pages={41--65},
  review={\MR {1351503 (96h:17015)}},
}

\bib{ConPro93}{article}{
  author={De Concini, C.},
  author={Procesi, C.},
  title={Quantum groups},
  conference={ title={$D$-modules, representation theory, and quantum groups (Venice, 1992)}, },
  book={ series={Lecture Notes in Math.}, volume={1565}, publisher={Springer}, place={Berlin}, },
  date={1993},
  pages={31--140},
  review={\MR {1288995 (95j:17012)}},
  doi={10.1007/BFb0073466},
}

\bib{DabSit03}{article}{
  author={D{\k {a}}browski, Ludwik},
  author={Sitarz, Andrzej},
  title={Dirac operator on the standard Podle\'s quantum sphere},
  conference={ title={Noncommutative geometry and quantum groups}, address={Warsaw}, date={2001}, },
  book={ series={Banach Center Publ.}, volume={61}, publisher={Polish Acad. Sci.}, place={Warsaw}, },
  date={2003},
  pages={49--58},
  review={\MR {2024421 (2005g:58056)}},
  doi={10.4064/bc61-0-4},
}

\bib{DanDab10}{article}{
  author={D'Andrea, Francesco},
  author={D{\k {a}}browski, Ludwik},
  title={Dirac operators on quantum projective spaces},
  journal={Comm. Math. Phys.},
  volume={295},
  date={2010},
  number={3},
  pages={731--790},
  issn={0010-3616},
  review={\MR {2600033}},
  doi={10.1007/s00220-010-0989-8},
}

\bib{DanDabLan08}{article}{
  author={D'Andrea, Francesco},
  author={D{\k {a}}browski, Ludwik},
  author={Landi, Giovanni},
  title={The noncommutative geometry of the quantum projective plane},
  journal={Rev. Math. Phys.},
  volume={20},
  date={2008},
  number={8},
  pages={979--1006},
  issn={0129-055X},
  review={\MR {2450892 (2009h:58015)}},
  doi={10.1142/S0129055X08003493},
}

\bib{Dij96}{article}{
  author={Dijkhuizen, Mathijs S.},
  title={Some remarks on the construction of quantum symmetric spaces},
  note={Representations of Lie groups, Lie algebras and their quantum analogues},
  journal={Acta Appl. Math.},
  volume={44},
  date={1996},
  number={1-2},
  pages={59--80},
  issn={0167-8019},
  review={\MR {1407040 (98c:33020)}},
  doi={10.1007/BF00116516},
}

\bib{Dri89}{article}{
  author={Drinfel{\cprime }d, V. G.},
  title={Quasi-Hopf algebras},
  language={Russian},
  journal={Algebra i Analiz},
  volume={1},
  date={1989},
  number={6},
  pages={114--148},
  issn={0234-0852},
  translation={ journal={Leningrad Math. J.}, volume={1}, date={1990}, number={6}, pages={1419--1457}, issn={1048-9924}, },
  review={\MR {1047964 (91b:17016)}},
}

\bib{Fio99}{article}{
  author={Fiore, Gaetano},
  title={On $q$-deformations of Clifford algebras},
  conference={ title={ 1}, address={Ixtapa-Zihuatanejo}, date={1999}, },
  book={ series={Progr. Phys.}, volume={18}, publisher={Birkh\"auser Boston}, place={Boston, MA}, },
  date={2000},
  pages={269--282},
  review={\MR {1783534 (2001h:81118)}},
  doi={10.1007/978-1-4612-1368-0-14},
}

\bib{Fio98}{article}{
  author={Fiore, Gaetano},
  title={Braided chains of $q$-deformed Heisenberg algebras},
  journal={J. Phys. A},
  volume={31},
  date={1998},
  number={23},
  pages={5289--5298},
  issn={0305-4470},
  review={\MR {1634881 (99f:81091)}},
  doi={10.1088/0305-4470/31/23/012},
}

\bib{Fri00}{book}{
  author={Friedrich, Thomas},
  title={Dirac operators in Riemannian geometry},
  series={Graduate Studies in Mathematics},
  volume={25},
  note={Translated from the 1997 German original by Andreas Nestke},
  publisher={American Mathematical Society},
  place={Providence, RI},
  date={2000},
  pages={xvi+195},
  isbn={0-8218-2055-9},
  review={\MR {1777332 (2001c:58017)}},
}

\bib{GovZha99}{article}{
  author={Gover, A. R.},
  author={Zhang, R. B.},
  title={Geometry of quantum homogeneous vector bundles and representation theory of quantum groups. I},
  journal={Rev. Math. Phys.},
  volume={11},
  date={1999},
  number={5},
  pages={533--552},
  issn={0129-055X},
  review={\MR {1696104 (2000j:81108)}},
  doi={10.1142/S0129055X99000209},
}

\bib{Han00}{article}{
  author={Hannabuss, K. C.},
  title={Bilinear forms, Clifford algebras, $q$-commutation relations, and quantum groups},
  journal={J. Algebra},
  volume={228},
  date={2000},
  number={1},
  pages={227--256},
  issn={0021-8693},
  review={\MR {1760963 (2001g:15030)}},
  doi={10.1006/jabr.1999.8257},
}

\bib{Hec03}{article}{
  author={Heckenberger, Istv{\'a}n},
  title={Spin geometry on quantum groups via covariant differential calculi},
  journal={Adv. Math.},
  volume={175},
  date={2003},
  number={2},
  pages={197--242},
  issn={0001-8708},
  review={\MR {1972632 (2004c:58016)}},
  doi={10.1016/S0001-8708(02)00044-0},
}

\bib{HecKol04}{article}{
  author={Heckenberger, Istv{\'a}n},
  author={Kolb, S.},
  title={The locally finite part of the dual coalgebra of quantized irreducible flag manifolds},
  journal={Proc. London Math. Soc. (3)},
  volume={89},
  date={2004},
  number={2},
  pages={457--484},
  issn={0024-6115},
  review={\MR {2078702 (2005i:20077)}},
  doi={10.1112/S0024611504014777},
}

\bib{HecKol06}{article}{
  author={Heckenberger, Istv{\'a}n},
  author={Kolb, Stefan},
  title={De Rham complex for quantized irreducible flag manifolds},
  journal={J. Algebra},
  volume={305},
  date={2006},
  number={2},
  pages={704--741},
  issn={0021-8693},
  review={\MR {2266849 (2008b:58012)}},
  doi={10.1016/j.jalgebra.2006.02.001},
}

\bib{Hum90}{book}{
  author={Humphreys, James E.},
  title={Reflection groups and Coxeter groups},
  series={Cambridge Studies in Advanced Mathematics},
  volume={29},
  publisher={Cambridge University Press},
  place={Cambridge},
  date={1990},
  pages={xii+204},
  isbn={0-521-37510-X},
  review={\MR {1066460 (92h:20002)}},
}

\bib{Jan96}{book}{
  author={Jantzen, Jens Carsten},
  title={Lectures on quantum groups},
  series={Graduate Studies in Mathematics},
  volume={6},
  publisher={American Mathematical Society},
  place={Providence, RI},
  date={1996},
  pages={viii+266},
  isbn={0-8218-0478-2},
  review={\MR {1359532 (96m:17029)}},
}

\bib{Jos95}{book}{
   author={Joseph, Anthony},
   title={Quantum groups and their primitive ideals},
   series={Ergebnisse der Mathematik und ihrer Grenzgebiete (3) [Results in
   Mathematics and Related Areas (3)]},
   volume={29},
   publisher={Springer-Verlag, Berlin},
   date={1995},
   pages={x+383},
   isbn={3-540-57057-8},
   review={\MR{1315966 (96d:17015)}},
   doi={10.1007/978-3-642-78400-2},
}

\bib{KamTin09}{article}{
  author={Kamnitzer, Joel},
  author={Tingley, Peter},
  title={The crystal commutor and Drinfeld's unitarized $R$-matrix},
  journal={J. Algebraic Combin.},
  volume={29},
  date={2009},
  number={3},
  pages={315--335},
  issn={0925-9899},
  review={\MR {2496310 (2010g:17013)}},
  doi={10.1007/s10801-008-0137-0},
}

\bib{KliSch97}{book}{
  author={Klimyk, Anatoli},
  author={Schm{\"u}dgen, Konrad},
  title={Quantum groups and their representations},
  series={Texts and Monographs in Physics},
  publisher={Springer-Verlag},
  place={Berlin},
  date={1997},
  pages={xx+552},
  isbn={3-540-63452-5},
  review={\MR {1492989 (99f:17017)}},
}

\bib{Kna01}{book}{
  author={Knapp, Anthony W.},
  title={Representation theory of semisimple groups},
  series={Princeton Landmarks in Mathematics},
  note={An overview based on examples; Reprint of the 1986 original},
  publisher={Princeton University Press},
  place={Princeton, NJ},
  date={2001},
  pages={xx+773},
  isbn={0-691-09089-0},
  review={\MR {1880691 (2002k:22011)}},
}

\bib{Kob08}{article}{
  author={Kobayashi, Toshiyuki},
  title={Multiplicity-free theorems of the restrictions of unitary highest weight modules with respect to reductive symmetric pairs},
  conference={ title={Representation theory and automorphic forms}, },
  book={ series={Progr. Math.}, volume={255}, publisher={Birkh\"auser Boston}, place={Boston, MA}, },
  date={2008},
  pages={45--109},
  review={\MR {2369496 (2008m:22024)}},
}

\bib{Kos61}{article}{
  author={Kostant, Bertram},
  title={Lie algebra cohomology and the generalized Borel-Weil theorem},
  journal={Ann. of Math. (2)},
  volume={74},
  date={1961},
  pages={329--387},
  issn={0003-486X},
  review={\MR {0142696 (26 \#265)}},
}

\bib{Kos99}{article}{
  author={Kostant, Bertram},
  title={A cubic Dirac operator and the emergence of Euler number multiplets of representations for equal rank subgroups},
  journal={Duke Math. J.},
  volume={100},
  date={1999},
  number={3},
  pages={447--501},
  issn={0012-7094},
  review={\MR {1719734 (2001k:22032)}},
  doi={10.1215/S0012-7094-99-10016-0},
}

\bib{Kra04}{article}{
  author={Kr{\"a}hmer, Ulrich},
  title={Dirac operators on quantum flag manifolds},
  journal={Lett. Math. Phys.},
  volume={67},
  date={2004},
  number={1},
  pages={49--59},
  issn={0377-9017},
  review={\MR {2063019 (2005b:58009)}},
  doi={10.1023/B:MATH.0000027748.64886.23},
}

\bib{MulSch99}{article}{
  author={M{\"u}ller, E. F.},
  author={Schneider, H.-J.},
  title={Quantum homogeneous spaces with faithfully flat module structures},
  journal={Israel J. Math.},
  volume={111},
  date={1999},
  pages={157--190},
  issn={0021-2172},
  review={\MR {1710737 (2000i:16086)}},
  doi={10.1007/BF02810683},
}

\bib{NesTus05}{article}{
  author={Neshveyev, Sergey},
  author={Tuset, Lars},
  title={A local index formula for the quantum sphere},
  journal={Comm. Math. Phys.},
  volume={254},
  date={2005},
  number={2},
  pages={323--341},
  issn={0010-3616},
  review={\MR {2117628 (2005h:58045)}},
  doi={10.1007/s00220-004-1154-z},
}

\bib{NouSug94}{article}{
  author={Noumi, Masatoshi},
  author={Sugitani, Tetsuya},
  title={Quantum symmetric spaces and related $q$-orthogonal polynomials},
  conference={ title={Group theoretical methods in physics}, address={Toyonaka}, date={1994}, },
  book={ publisher={World Sci. Publ., River Edge, NJ}, },
  date={1995},
  pages={28--40},
  review={\MR {1413733 (97h:33033)}},
}

\bib{OBua13}{article}{
  author={{\'O}Buachalla, Reamonn},
  title={Noncommutative Complex Structures on Quantum Homogeneous Spaces},
  date={2013},
  eprint={arXiv:1108.2374 [math.QA]},
}

\bib{Par72}{article}{
  author={Parthasarathy, R.},
  title={Dirac operator and the discrete series},
  journal={Ann. of Math. (2)},
  volume={96},
  date={1972},
  pages={1--30},
  issn={0003-486X},
  review={\MR {0318398 (47 \#6945)}},
}

\bib{Pod87}{article}{
  author={Podle{\'s}, P.},
  title={Quantum spheres},
  journal={Lett. Math. Phys.},
  volume={14},
  date={1987},
  number={3},
  pages={193--202},
  issn={0377-9017},
  review={\MR {919322 (89b:46081)}},
  doi={10.1007/BF00416848},
}

\bib{PolPos05}{book}{
  author={Polishchuk, Alexander},
  author={Positselski, Leonid},
  title={Quadratic algebras},
  series={University Lecture Series},
  volume={37},
  publisher={American Mathematical Society},
  place={Providence, RI},
  date={2005},
  pages={xii+159},
  isbn={0-8218-3834-2},
  review={\MR {2177131 (2006f:16043)}},
}

\bib{Rie09}{article}{
  author={Rieffel, Marc A.},
  title={Dirac operators for coadjoint orbits of compact Lie groups},
  journal={M\"unster J. Math.},
  volume={2},
  date={2009},
  pages={265--297},
  issn={1867-5778},
  review={\MR {2545615 (2011f:22014)}},
}

\bib{SchWag04}{article}{
  author={Schm{\"u}dgen, Konrad},
  author={Wagner, Elmar},
  title={Dirac operator and a twisted cyclic cocycle on the standard Podle\'s quantum sphere},
  journal={J. Reine Angew. Math.},
  volume={574},
  date={2004},
  pages={219--235},
  issn={0075-4102},
  review={\MR {2099116 (2005m:58019)}},
  doi={10.1515/crll.2004.072},
}

\bib{Sem93}{article}{
  author={Semmelmann, Uwe},
  title={The spectrum of the Dirac operator on complex projective spaces},
  journal={SFB 288 Preprint},
  volume={95},
  date={1993},
}

\bib{Twi92}{article}{
  author={Twietmeyer, Eric},
  title={Real forms of $U_q({\germ g})$},
  journal={Lett. Math. Phys.},
  volume={24},
  date={1992},
  number={1},
  pages={49--58},
  issn={0377-9017},
  review={\MR {1162899 (94e:17029)}},
  doi={10.1007/BF00430002},
}

\bib{Wan54}{article}{
  author={Wang, Hsien-Chung},
  title={Closed manifolds with homogeneous complex structure},
  journal={Amer. J. Math.},
  volume={76},
  date={1954},
  pages={1--32},
  issn={0002-9327},
  review={\MR {0066011 (16,518a)}},
}

\bib{Wor87}{article}{
  author={Woronowicz, S. L.},
  title={Compact matrix pseudogroups},
  journal={Comm. Math. Phys.},
  volume={111},
  date={1987},
  number={4},
  pages={613--665},
  issn={0010-3616},
  review={\MR {901157 (88m:46079)}},
}

\bib{Zwi09}{article}{
  author={Zwicknagl, Sebastian},
  title={$R$-matrix Poisson algebras and their deformations},
  journal={Adv. Math.},
  volume={220},
  date={2009},
  number={1},
  pages={1--58},
  issn={0001-8708},
  review={\MR {2462115 (2009i:17035)}},
  doi={10.1016/j.aim.2008.08.006},
}

\end{biblist}
\end{bibdiv}

\end{document}